\newcommand{\re}{\mathbb{R}}
\newcommand{\rt}{\mathbb{R}^{3}}
\newcommand{\sw}{\mathbb{S}^{2}}
\newcommand{\st}{\mathbb{S}^{3}}
\newcommand{\zz}{\mathbb{Z}}
\newcommand{\si}{\Sigma}
\newcommand{\boH}{\mathcal{H}}
\newcommand\too{\longrightarrow}
\newcommand{\tr}{\operatorname{tr}}
\newcommand{\diver}{\operatorname{div}}
\newcommand{\hess}{\operatorname{Hess}}
\newcommand{\curl}{\operatorname{curl}}
\DeclareMathOperator{\Diff}{Diff}
\DeclareMathOperator{\Ric}{Ric}
\DeclareMathOperator{\interior}{int}
\DeclareMathOperator{\spt}{spt}
\newtheorem*{thmA}{Theorem A}
\newtheorem*{thmB}{Theorem B}
\newtheorem*{thmC}{Theorem C}
\newtheorem*{thmD}{Theorem D}
\newtheorem*{thmE}{Theorem E}
\newtheorem{theo}{Theorem}[]
\newtheorem{prop}[theo]{Proposition}
\newtheorem*{pro}{Area-Charge Inequality}
\newtheorem{lemma}[theo]{Lemma}
\newtheorem{definition}[theo]{Definition}
\newtheorem{rem}[theo]{Remark}
\newcommand*\Item[1][]{%
  \ifx\relax#1\relax  \item \else \item[#1] \fi
  \abovedisplayskip=0pt\abovedisplayshortskip=0pt~\vspace*{-\baselineskip}
}
\begin{document}

\title[Min-max minimal surfaces, horizons and electrostatic systems]{Min-max minimal surfaces, horizons and electrostatic systems} 

\author{Tiarlos Cruz}
\address{
Universidade Federal de Alagoas\\
Instituto de Matem\'atica\\
Macei\'o, AL -  57072-970, Brazil}
\email{cicero.cruz@im.ufal.br}

\author{Vanderson Lima}
\address{
  Universidade Federal do Rio Grande do Sul\\
  Instituto de Matem\'atica e Estat\'istica\\
 Porto Alegre, RS - 91509-900, Brazil}
\email{vanderson.lima@ufrgs.br}

\author{ Alexandre de Sousa}
\address{
Universidade Federal de Alagoas\\
Instituto de Matem\'atica\\
Macei\'o, AL -  57072-970, Brazil}
\email{a.asm@protonmail.com}

\begin{abstract}
We present a connection between minimal surfaces of index one and General Relativity. First, we show that for a certain class of electrostatic systems, each of its unstable horizons is the solution of a one-parameter min-max problem for the area functional, in particular it has index one. Combining this with a theorem by Marques and Neves we obtain a uniqueness result for electrostatic systems. We also prove an inequality relating the area and the charge of a minimal surface of index one in a Cauchy data satisfying the Dominant Energy Condition. 
\end{abstract}

\maketitle

\section{Introduction}

The index of a minimal surface, seen as a critical point of the area functional, is a non-negative integer that (roughly speaking) measures the number of distinct deformations that decrease the area to second order. The connection between stable (index zero) minimal surfaces and General Relativity is well known. For instance: the spatial cross sections of the event horizon in static black holes obeying the null energy condition are stable minimal surfaces; Schoen and Yau used the solution of the Plateau problem in their proof of the
{\it Positive Mass Theorem} \cite{SY1}; the {\it Riemannian Penrose inequality} relates the ADM mass and the area of the outermost horizons for a certain class of Cauchy data, \cite{HI,Bray,BrayLee}; a non-exhaustive list of other works is \cite{FG,G,MN,LN,CCE,A,KWY}.

The main goal of this work is to show that minimal surfaces of index one also appear in a natural way in the context of General Relativity. 

As a motivation, consider a Riemannian $3$-manifold $(M^3,g)$, $E \in \mathfrak{X}(M)$ a tangent vector field, and $V\in C^{\infty}(M)$ such that $V > 0$. The Einstein-Maxwell equations with cosmological constant $\Lambda$ for the electrostatic space-time associated to $(M,g,V,E)$ is the following system of equations (see  Subsection \ref{App})
\begin{align}
\hess_{g}V &= V({\Ric_g} -\Lambda g + 2E^{\flat}\otimes E^{\flat} - |E|^2g),\label{sev1}\\
\Delta_{g} V &= (|E|^2 - \Lambda)V,\label{sev2}\\ 
\diver_{g}E &= 0, \ \curl_g (VE) = 0\label{sev3},
\end{align}
where $E^{\flat}$ is the one-form metrically dual to $E$.

\begin{definition}
We say $(M,g,V,E)$ is an electrostatic system if $(M,g)$ is an oriented Riemannian manifold, $V \in C^{\infty}(M)$ and it is not identically zero, $E \in \mathfrak{X}(M)$, and the equations \eqref{sev1}, \eqref{sev2} and \eqref{sev3} are satisfied for some constant $\Lambda \in \re$. Moreover, the system is called complete, if $(M,g)$ is complete.
\end{definition}
 
Examples of such systems are presented in Section \ref{ME}.

In an electrostatic system, the set $\{V = 0\}$ consists of totally geodesic surfaces. Let $\Omega$ be a maximal region where $V > 0$ and consider the associated static space-time. A compact component of $\partial\Omega \subset \{V = 0\}$ has the physical interpretation of being the cross section of a {\it horizon} (see  ~\cite[Section $2.3.2$]{GP} for a discussion about the concept of horizon). In the standard models, some of these surfaces have positive index (see Section \ref{ME}). Our first result is:


\begin{thmA}
Let $(M^3,g,V,E)$ be a compact electrostatic system, such that $V^{-1}(0) = \partial M$ and $\Lambda + |E|^2 > 0$. Then:
\begin{enumerate}
\item If $\partial M$ contains an unstable component, then the number of unstable components of $\partial M$ is equal to one and $M$ is simply connected.
\item Each connected component of $\partial M$ is diffeormorphic to a $2$-sphere.
\end{enumerate}
\end{thmA}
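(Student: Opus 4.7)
The plan is to adapt L.~Ambrozio's strategy for the purely static case \cite{A} to the present electrovacuum setting. The starting point is the scalar curvature identity
\[
R_g = 2(\Lambda + |E|^2),
\]
obtained by tracing \eqref{sev1} to get $\Delta_g V = V(R_g - 3\Lambda - |E|^2)$ and substituting \eqref{sev2}; under our hypothesis this gives $R_g > 0$ on $M$. Next, since both sides of \eqref{sev1} carry an overall factor of $V$, the Hessian $\hess_g V$ vanishes identically on $\partial M = \{V = 0\}$. Combined with $|\nabla V| \neq 0$ on $\partial M$ (which follows from Aronszajn-type unique continuation applied to \eqref{sev2}, since $V$, $\nabla V$ and $\Delta V$ all vanish at any putative critical boundary point), a standard computation shows that each component of $\partial M$ is totally geodesic.

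Via the Gauss equation on such a component $\Sigma$, the Jacobi operator reduces to
\[
L_\Sigma = -\Delta_\Sigma - (\Lambda + |E|^2 - K_\Sigma).
\]
Testing the associated quadratic form against $u \equiv 1$ and applying Gauss--Bonnet, any \emph{stable} component $\Sigma$ satisfies
\[
4\pi\chi(\Sigma) \geq \int_\Sigma(\Lambda + |E|^2) > 0,
\]
so that $\chi(\Sigma) > 0$ and hence $\Sigma \cong \mathbb{S}^2$. This disposes of the sphericity of every stable boundary component.

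To handle the unstable components and obtain the two structural conclusions, I would use the first positive eigenfunction $\phi_0$ of $L_{\Sigma_0}$ on an unstable component $\Sigma_0$ (with negative first eigenvalue) to build a normal variation pushing $\Sigma_0$ into $\interior(M)$ along which the area strictly decreases. The key step is to extend this local deformation into a \emph{global} sweepout of $M$ by surfaces of area at most $|\Sigma_0|$, using the positivity of $R_g$ together with the maximum principle to obstruct interior closed minimal surfaces bounding one side of a leaf. A Morse/deformation-retract analysis of the resulting sweepout then yields: (i) $M$ deformation retracts onto a neighborhood of $\Sigma_0$, forcing $\pi_1(M) = 1$; (ii) the terminal leaves of the sweepout are precisely the remaining boundary components, which are therefore stable, and hence spheres by the $u \equiv 1$ argument above; and (iii) a second unstable component would produce a competing sweepout incompatible with this retraction, giving uniqueness. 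Simple connectedness together with the Gauss equation finally forces the unique unstable component to also have genus zero.

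The principal obstacle is the construction and global extension of the sweepout in the presence of the electric field. In the pure static setting, $V$ (or a monotone function of it) directly provides a natural globally defined family of level surfaces realizing the sweepout; here the terms $2E^\flat \otimes E^\flat - |E|^2 g$ in \eqref{sev1} spoil the clean Hessian structure of $V$, so the argument must either proceed via Marques--Neves-type min-max machinery adapted to compact manifolds with totally geodesic boundary, or develop a finer analysis of the level sets of $V$ controlling the interaction between $|\nabla V|$ and $|E|^2$.
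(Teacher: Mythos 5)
Your preliminary reductions are correct and agree with the paper (tracing \eqref{sev1} against \eqref{sev2} to get $R_g = 2\Lambda + 2|E|^2 > 0$; $|\nabla V|\neq 0$ on $\partial M$ by unique continuation, so the boundary is totally geodesic; stable components are spheres by testing stability with $u\equiv 1$ and Gauss--Bonnet). But the two conclusions that constitute the theorem --- exactly one unstable component, and $\pi_1(M)=1$ --- are exactly where the proposal stops being a proof. You acknowledge the central gap yourself: the extension of the local eigenfunction deformation of $\Sigma_0$ to a global sweepout of $M$ by surfaces of area at most $|\Sigma_0|$ is not constructed, and that is the whole difficulty. Moreover, even granting such a sweepout, the consequences you draw from it do not follow as stated: if $M$ deformation retracted onto a neighborhood of $\Sigma_0$, you would only get $\pi_1(M)\cong\pi_1(\Sigma_0)$, which is trivial precisely when $\Sigma_0$ is already known to be a sphere --- the very thing still to be proved --- so the argument for (i) is circular; and (iii), that a second unstable component "would produce a competing sweepout incompatible with this retraction," is an assertion, not an argument. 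Since the genus-zero statement for the unstable component is deduced from simple connectedness, it too remains unproven.

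For comparison, the paper's proof of Theorem A uses no sweepouts at all (min-max only enters later, in Theorem B, after Theorem A is available), and the obstruction you fear --- that the terms $2E^\flat\otimes E^\flat - |E|^2 g$ spoil the Hessian structure of $V$ --- is in fact harmless for the key monotonicity: along the flow with normal speed $V$ one computes $L_{\Sigma_t}V = g(\nabla V,N_t)H_t + |A|^2V + 2V\bigl(|E|^2 - \langle E,N_t\rangle^2\bigr)$, and the electric term is nonnegative by Cauchy--Schwarz, so $\partial_t H_t \leq -g(\nabla V,N_t)H_t$ and the area is non-increasing, with a rigidity/splitting statement when it is constant (Proposition \ref{goodlemma}). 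With this in hand, Step 1 of Theorem \ref{topology} rules out interior closed minimal surfaces with stable two-sided cover via homological area minimization plus the flow; uniqueness of the unstable component then comes from Meeks--Simon--Yau minimization in the isotopy class, $\gamma$-reduction, and the long exact homology sequence of $(M,\partial M)$; the stable components are spheres by $R_g>0$, $3$-balls are glued along them to show $M$ is a compression body; and simple connectedness follows by combining surjectivity of $\pi_1(\partial M)\to\pi_1(M)$ (from the compression body structure) with injectivity, proved via Meeks--Yau least-area disks and the same flow (Proposition \ref{injective}). Any completion of your proposal would need to either supply ingredients of this $3$-manifold-topology type or carry out the global min-max construction you defer; as written it establishes only the preliminary facts.
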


The geometric meaning of the condition $\Lambda + |E|^2> 0$ lies in the fact that $R_g = 2\Lambda + 2|E|^2$ (see Lemma \ref{lemma:propert}), so $(M^3,g)$ has positive scalar curvature.

Generically, a minimal surface of index $k > 0$ is the solution of a local $k$-parameter min-max problem for the area functional, see \cite{W}. 
This motivates the following theorem (for a more precise version see Section \ref{MMCU}).

\begin{thmB}
Consider a complete  electrostatic system $(M^3,g,V,E)$, such that $\Lambda + |E|^2> 0$. Let $\Omega_1, \Omega_2$ be connected maximal regions where $V \neq 0$, such that $\overline\Omega_i$ is compact and let $\si = \partial\Omega_1\cap\partial\Omega_2 \subset V^{-1}(0)$ be unstable. Suppose $\partial\Omega_i\setminus \si$ is either empty or strictly stable, for $i=1,2$. Then $\si$ is the solution of a one-parameter min-max problem for the area functional in $(M,g)$ and has index equal to one.
\end{thmB}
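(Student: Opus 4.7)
\emph{Step 1 (Sweepout construction).} Set $N = \overline\Omega_1 \cup \overline\Omega_2$, a compact three-manifold with boundary $\Gamma := (\partial\Omega_1 \cup \partial\Omega_2) \setminus \Sigma$ (possibly empty), containing $\Sigma$ as a separating totally geodesic minimal surface. The first goal is to build a continuous family $\{\Sigma_t\}_{t\in[-1,1]}$ with $\Sigma_0=\Sigma$, $\Sigma_t\subset\Omega_1$ for $t>0$, $\Sigma_t\subset\Omega_2$ for $t<0$, and $\Sigma_{\pm 1}$ equal either to the corresponding component of $\Gamma$ or to a point if $\Gamma=\emptyset$. In the interior of each $\Omega_i$ the natural candidate is the family of (suitably signed) level sets of $V$; near $\Gamma$ one extends this using the CMC foliation produced by strict stability, following \cite{A}; if $|\nabla V|$ degenerates on $\Sigma$ (extremal-horizon case) one replaces the level-set foliation in a neighborhood of $\Sigma$ by an exponential collar of the totally geodesic $\Sigma$.

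\emph{Step 2 (Area is maximized at $\Sigma$).} For the level sets of $V$ in $\Omega_1$, with $\nu=\nabla V/|\nabla V|$, one computes $H_t=(\Delta V-\hess V(\nu,\nu))/|\nabla V|$. Substituting \eqref{sev1}--\eqref{sev2} yields the identity
\[
H_t \;=\; \frac{V}{|\nabla V|}\bigl(2|E|^2 - 2E(\nu)^2 - \Ric(\nu,\nu)\bigr).
\]
Combined with $R_g=2(\Lambda+|E|^2)>0$ (Lemma \ref{lemma:propert}), the co-area formula, and the fact that $|\nabla V|$ is locally constant on $\Sigma$, this should imply that $t\mapsto |\Sigma_t|$ attains a strict maximum equal to $|\Sigma|$ at $t=0$ and decreases as $|t|$ grows. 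Strict stability of $\Gamma$ extends the monotonicity all the way to the endpoints.

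\emph{Step 3 (Min-max identification and index).} Let $\Pi$ be the saturation of $\{\Sigma_t\}$ under the relevant isotopy class and $W=W(\Pi)$ the associated min-max width. Step 2 gives $W\le |\Sigma|$. For the reverse inequality, $\Sigma$ separates $N$, so any admissible sweepout in $\Pi$ must contain a slice of area at least $|\Sigma|$; the CMC foliations near $\Gamma$ prevent degeneration at the endpoints. Applying the one-parameter Simon--Smith/Marques--Neves min-max theorem then produces a smooth, closed, embedded minimal surface realized as a min-max limit, of area $|\Sigma|$ and index at most one. Strict stability of $\Gamma$ rules out the stable boundary components as limits, and the remaining possibility in our sweepout class is that this limit equals $\Sigma$ with multiplicity one. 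Combining the min-max bound $\ind(\Sigma)\le 1$ with the hypothesis that $\Sigma$ is unstable (so $\ind(\Sigma)\ge 1$) gives $\ind(\Sigma)=1$.

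\emph{Main obstacle.} The crux is the area monotonicity in Step 2: the clean identities available in the purely static case of \cite{A} pick up new contributions from $E^\flat\otimes E^\flat$ and $|E|^2 g$ in \eqref{sev1}, and one must verify that these do not spoil the sign of $\frac{d}{dt}|\Sigma_t|$. A second subtlety is the construction near $\Sigma$ when $|\nabla V|$ vanishes on $\Sigma$, where the level-set sweepout degenerates and must be replaced by a geometric collar; checking that the CMC/collar interpolation still produces an admissible sweepout in $\Pi$ is the technical heart of the argument.
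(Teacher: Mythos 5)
There are two genuine gaps, and together they bypass essentially all of the actual work. First, your Step 2 does not hold as stated: the identity $H_t = \frac{V}{|\nabla V|}\bigl(2|E|^2 - 2E(\nu)^2 - \Ric(\nu,\nu)\bigr)$ gives no sign, because $R_g = 2\Lambda + 2|E|^2 > 0$ controls only the trace of the Ricci tensor, not $\Ric(\nu,\nu)$ pointwise; so there is no reason the level sets of $V$ have signed mean curvature, and hence no monotonicity of $t\mapsto|\Sigma_t|$ (besides, interior critical levels of $V$ can make the family inadmissible as a sweepout). The paper never attempts such a direct foliation. Instead it pushes $\Sigma$ off itself using the first eigenfunction of the Jacobi operator (instability gives strictly decreasing area in a collar), and then proves by contradiction that the remaining region $\Omega_i^\varepsilon$ admits a sweepout with maximal area below $|\Sigma|-\delta/2$: otherwise the local min-max theorem (Theorem \ref{smoothminmax}) would produce an unstable minimal surface homologous to $\Sigma$, and minimizing area in its homology class would yield a stable minimal surface in the interior, contradicting the fact (Step 1 of Theorem \ref{char1}, via Lemma \ref{lemma:stability}, Theorem \ref{topology} and the flow \eqref{flow} of Proposition \ref{goodlemma}) that $\Omega$ contains no closed embedded minimal surface whose two-sided cover is stable. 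Note also that the monotonicity result the paper does have (Proposition \ref{goodlemma}) concerns the flow with normal speed $V$ starting at a minimal surface, which is not the level-set family of $V$ and cannot start at $\Sigma\subset V^{-1}(0)$.

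Second, your Step 3 asserts the reverse inequality by saying that, since $\Sigma$ separates, "any admissible sweepout must contain a slice of area at least $|\Sigma|$", and that the min-max limit "must equal $\Sigma$". Neither follows: separation gives no area lower bound, and Step 1 only excludes stable interior surfaces, so a priori the min-max procedure could converge (possibly with multiplicity) to a different, unstable minimal surface $\Sigma^\infty$ of smaller area intersecting $\Sigma$. Identifying the width with $|\Sigma|$ is the technical heart of the paper's proof: it takes the min-max surface $\Sigma^\infty$, notes it must intersect $\Sigma$ (no disjoint closed minimal surfaces exist by \cite{MPR}), and then proves $|\Sigma|\leq|\Sigma^\infty|$ by a case analysis of $\Sigma\cap\Sigma^\infty$ (transverse with one or several curves, or tangential with prong singularities), using Meeks--Yau disks, the Plateau-type existence and regularity results of Appendix \ref{appendix}, and the rigidity statement of Proposition \ref{goodlemma} to force the model geometry and derive a contradiction from $|\Sigma^\infty|<|\Sigma|$. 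Only after this does the Marques--Neves index bound give $\ind(\Sigma)\leq 1$, hence $=1$ by instability; the Almgren--Pitts statement then needs the interpolation Theorem \ref{conttodisc}. Your proposal, as written, assumes exactly these two points rather than proving them.
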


The examples to have in mind concerning this result are the system obtained from the round $3$-sphere choosing $\si$ as the equator, and the de-Sitter Reissner-Nordstr\"om system choosing $\si$ as one of the unstable components of $V^{-1}(0)$ (see Section \ref{ME}). Under the assumptions of Theorem B, if $\partial\Omega$ has a degenerate component (section \ref{sec:index}), although we could not prove that $\si$ is the solution of a min-max problem, we were still able to prove that it has index one.

\begin{thmC}
Consider a complete electrostatic system $(M^3,g,V,E)$, such that $\Lambda + |E|^2 > 0$. Let $\Omega_1, \Omega_2$ be connected maximal regions where $V \neq 0$, such that $\overline\Omega_i$ is compact and let $\si = \partial\Omega_1\cap\partial\Omega_2 \subset V^{-1}(0)$ be unstable. Suppose $\partial(\Omega_1\cup\Omega_2)$ is non empty and stable, and at least one of its components is degenerate. Then $\si$ has index one.
\end{thmC}

In the case $E \equiv 0$, Theorem A was proved by L. Ambrozio \cite{A}. It is important to highlight that Theorem B and Theorem C also apply if $E \equiv 0$, and are new results even in this case.

Let $(M^3,g)$ be an oriented Riemannian 3-manifold and let $E \in \mathfrak{X}(M)$. We can consider $(M^3,g,E)$ as part of a Cauchy data for the Einstein-Maxwell equations and the {\it Dominant Energy Condition (DEC)} for the non-electromagnetic fields implies (see Section \ref{Area-Charge ineq})
$$R_g \geq 2\Lambda + 2|E|^2.$$
Let $\si \subset M$ be an orientable closed surface  with unit normal $N$. The charge of $\si$ with respect to $E$ is defined by
\begin{equation}\label{defcharge}
Q(\Sigma) = \frac{1}{4\pi}\int_{\Sigma}\langle E, N\rangle\,d\mu.
\end{equation}

Assuming the DEC holds, there is an inequality relating the area and the charge of closed stable minimal surfaces in $(M,g)$ (see \cite{Gi,DJR}). In particular, the relation holds for the spatial cross-sections of the horizon of an electrostatic black hole. Some of the models we present in section \ref{ME} have a \emph{cosmological horizon}, whose spatial cross sections are minimal surfaces of index one (see subsection \ref{sec:index}). In this case we can prove the following.

\begin{pro}
Consider an oriented Riemannian $3$-manifold $(M^3,g)$, $E \in \mathfrak{X}(M)$ and $\Lambda \in \re$, such that $R_{g} \geq 2\Lambda + 2|E|^2$. Suppose $\si \subset (M,g)$ is an orientable closed minimal surface of index one, with unit normal $N$. Then,
\begin{equation}\label{aci}
\Lambda|\si| + 16\pi^2\frac{Q(\si)^2}{|\si|} \leq 12\pi + 8\pi\Biggl(\frac{g(\si)}{2} - \biggl[\frac{g(\si)}{2}\biggr]\Biggr),
\end{equation}
where $g(\si)$ and $|\si|$ are the genus and the area of $\si$ respectively.
Moreover, the equality in \eqref{aci} holds if, and only if, $\si$ is totally geodesic, $E|_{\si} = aN$, for some constant $a$, $(R_g)|_{\si} \equiv 2\Lambda + 2a^2$, and $g(\si)$ is an even integer.
\end{pro}

Using the previous theorems combined with the results of \cite{BBN} and \cite{MaNe}, we obtain the following uniqueness result for electrostatic systems.

\begin{thmD}
Let $(M^3,g,V,E)$ be an electrostatic system such that $M$ is closed, $\Lambda + |E|^2 > 0$ and $V^{-1}(0) = \si$ is non-empty and connected. Then $\si$ is a separating sphere, and
\begin{equation*}
\Lambda|\si|  \leq 12\pi,
\end{equation*}
with equality if, and only if, $E \equiv 0$ and $(M,g)$ is isometric to the standard sphere of constant sectional curvature $\frac{\Lambda}{3}$.
\end{thmD}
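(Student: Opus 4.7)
The plan is to reduce Theorem D to a combination of Theorems A, B and the Area-Charge Inequality, with the rigidity step handled by Theorem C.

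First I would show that $\Sigma$ must separate $M$ into two open regions $\Omega_1,\Omega_2$ on which $V$ has opposite signs. Since $V$ is smooth and $V^{-1}(0)=\Sigma$, the function $V$ has a constant sign on each connected component of $M\setminus\Sigma$, so it suffices to exclude the case where $V$ has a single sign on $M\setminus\Sigma$: otherwise $\Sigma$ would be a set of local minima, forcing $\nabla V|_\Sigma=0$. Combined with $\hess V=0$ on $\{V=0\}$ from \eqref{sev1}, an induction on the covariant derivatives of \eqref{sev1} shows $V$ vanishes to infinite order along $\Sigma$. Then Aronszajn's unique continuation theorem applied to the elliptic equation \eqref{sev2} forces $V\equiv 0$, contradicting the hypothesis. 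Thus $M\setminus\Sigma=\Omega_1\sqcup\Omega_2$, and each $(\overline{\Omega_i},g,V,E)$ is a compact electrostatic system with $V^{-1}(0)=\partial\overline{\Omega_i}=\Sigma$.

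I would then split into two cases according to the stability of $\Sigma$ as a minimal surface in $(M,g)$. If $\Sigma$ is stable, inserting the constant test function in the stability inequality together with the Gauss equation, the hypothesis $R_g\ge 2\Lambda+2|E|^2>0$, Gauss-Bonnet, and Cauchy-Schwarz applied to $Q(\Sigma)=\frac{1}{4\pi}\int_\Sigma\langle E,N\rangle\,d\mu$ force $\Sigma$ to be a $2$-sphere and $\Lambda|\Sigma|+16\pi^2Q(\Sigma)^2/|\Sigma|\le 4\pi$, strictly less than $12\pi$. If $\Sigma$ is unstable, applying Theorem A to each $\overline{\Omega_i}$ yields $\Sigma\cong S^2$ and each $\overline{\Omega_i}$ simply connected. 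Since $\partial\Omega_i\setminus\Sigma=\emptyset$, Theorem B applies directly to show $\Sigma$ has index one, and substituting $g(\Sigma)=0$ into the Area-Charge Inequality yields the desired bound $\Lambda|\Sigma|+16\pi^2 Q(\Sigma)^2/|\Sigma|\le 12\pi$.

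For the rigidity statement, equality must occur in the unstable subcase since the stable one is strict. From the first step, $M=\overline{\Omega_1}\cup_\Sigma\overline{\Omega_2}$ is the union along $S^2$ of two simply connected compact pieces, so by Van Kampen $M$ is simply connected and hence diffeomorphic to $\st$. I would then invoke Theorem C on $(\st,g)$ (noting that $\diver_g E=0$ is exactly \eqref{sev3}) to conclude $E\equiv 0$ and that $g$ has constant sectional curvature $\Lambda/3$. The main obstacle is verifying the hypothesis of Theorem C that $(\st,g)$ has no embedded stable minimal spheres in the equality setting: my approach would be to show that the existence of such a sphere $\Sigma'$, which by the stable case above satisfies $|\Sigma'|\le 4\pi/\Lambda$ strictly smaller than $|\Sigma|$, together with the one-parameter min-max characterization $|\Sigma|=W(M,g)$ provided by Theorem B, leads to a contradiction via a sweepout through $\Sigma'$ whose maximum area is strictly less than $|\Sigma|$.
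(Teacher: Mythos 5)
Your overall route is the same as the paper's: treat the stable case by plugging $f\equiv 1$ into the stability inequality (getting the strict bound $\leq 4\pi$), and in the unstable case use Theorem A on the two closed-up components to get a sphere and simple connectivity, Theorem B to get the index-one/width characterization, the Area-Charge Inequality (Proposition \ref{mainineq}) with $g(\si)=0$ for the bound, and Theorem C for rigidity. Your separation step is the one genuine (and harmless) deviation: you deduce that $\si$ separates and that $V$ changes sign from $|\nabla_g V|>0$ on $\si$ (Lemma \ref{lemma:propert}, item ii)), which is correct and more elementary, whereas the paper opens $M$ along $\si$ and contradicts Theorem A's statement that only one unstable boundary component can occur. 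Up to and including the inequality, your argument is sound.

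The gap is in the final step, exactly where you flag "the main obstacle": verifying that $(\st,g)$ has no embedded stable minimal spheres so that Theorem C (equivalently Theorem \ref{Marq.Nev}) applies. First, the quantitative comparison you propose is not available at that stage: $|\Sigma'|\le 4\pi/\Lambda$ presupposes $\Lambda>0$, and "strictly smaller than $|\Sigma|$" needs a lower bound on $|\Sigma|$; both would have to be extracted first from the equality via $Q(\si)=0$ (which does hold, since $\si=\partial\Omega_1$ and $\diver_g E=0$), giving $\Lambda|\si|=12\pi$ — a step you do not make. Second, and more seriously, even granting $|\Sigma'|<|\Sigma|=W$, the existence of a stable sphere of small area does not produce a sweepout of $M$ whose maximal slice has area $<|\Sigma|$: the maximum is taken over a family sweeping out all of $M$, and one small slice controls nothing about the others. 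Indeed \cite{Mo}, cited in the paper, exhibits metrics with $R_g\ge 2\Lambda$ containing stable spheres and arbitrarily large width, so no contradiction of the kind you describe can be derived from $|\Sigma'|<W$ alone. The correct argument — and the one the paper implicitly uses when it "combines" Theorems \ref{char1} and \ref{rigid} — is Step 1 of the proof of Theorem \ref{char1}: in an electrostatic system there is no embedded closed minimal surface with stable two-sided cover anywhere in $M$, unconditionally and not only "in the equality setting". Inside $\interior\Omega_i$ such a surface is excluded by the area-nonincreasing flow \eqref{flow} together with the local minimization argument of Theorem \ref{topology}, and a stable surface meeting $\si$ would satisfy $V\equiv 0$ on it by Lemma \ref{lemma:stability}, hence coincide with $\si=V^{-1}(0)$, contradicting the instability of $\si$. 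With that substitution your proof closes; as written, the verification of Theorem C's hypothesis does not.
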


Theorem D is inspired by an analogous result on the case $E \equiv 0$, due to Boucher-Gibbons-Horowitz \cite{BGH} and Shen \cite{Shen}, however, our methods are completely different from the ones of both these works. Using an approach in the spirit of the work of Shen we have obtained the following.

\begin{thmE}
Let $(M,g,V,E)$ be a compact electrostatic system, such that $V^{-1}(0) = \partial M = \cup_{i=1}^{\ell}\si_i$. Suppose $\sup_M |E|^2 < \Lambda$. Then, 
\begin{equation*}
\sum_{i=1}^{\ell}k_i\left(\frac{\Lambda}{3}\,|\si_i| + \frac{16\pi^2 Q(\si_i)^2}{|\si_i|}\right) \leq 4\pi\sum_{i=1}^{\ell}k_i,
\end{equation*}
where $k_i = \big|(\nabla_g V)|_{\si_i}\big|$. Moreover, the equality holds if, and only if, $E \equiv 0$
and $(M,g)$ is isometric to the hemisphere of constant sectional curvature $\frac{\Lambda}{3}$.
\end{thmE}

We remark that the quantities $k_i$ appearing in last theorem are constants (Lemma \ref{lemma:propert}), called of \emph{surface gravities} associated to the horizons.

Let us discuss the proofs and the content of the main results. Theorem A is a generalization of a result in the case of {\it vacuum static systems} (i.e. $E \equiv 0$), due to Ambrozio \cite{A}. A key step in the proof in \cite{A} relies on constructing a singular Einstein four manifold from a static system and prove a result in the spirit of the Bonnet-Myers Theorem. However, it seems to be difficult to adapt this approach to our more general setting. We settle this by borrowing some ideas from Ambrozio's proof and using some results from the topological theory of $3$-manifolds and its connection to minimal surfaces.

In Theorems B and C we use the local {\it Min-Max Theory for the area functional} developed by Ketover, Liokumovich and Song \cite{KLS}. More precisely for Theorem B, we use the conclusions of Theorem A to prove that $\si$ attains the maximum of the area in a \emph{sweepout}, and proceed to show that this surface realizes the {\it min-max width}. In Theorem C we use the same idea modifying the metric in a neighborhood of $\partial\Omega$. We believe Theorems A, B and C may be of relevance to the problem of classification of electrostatic systems.

The study of static systems is a well established topic of research. In the context of General Relativity, those correspond to certain static space-times. Some important works, for instance, in the case of vacuum are \cite{I1,BM,Chr}. For the electrovaccum case some references are \cite{I2,CT,CGa}. From the geometric point of view, to be static vacuum ($E \equiv 0$) means that the formal adjoint of the linearization of the scalar curvature has non-trivial kernel (\cite{FM}), a fact which has applications to the problem of prescribing the scalar curvature (see \cite{Corv}).

The Min-max theory has been a topic of intense research in the last years, mainly due to the work of Marques and Neves.
The discrete setting originated in the work of Almgren \cite{Alm2} and Pitts \cite{P}, with the subsequent work on regularity by Schoen and Simon \cite{SS}. Important recent developments were achieved in \cite{MaNeindexbound,LMN,Zhou3}. These tools led to many applications, as the proof of the {\it Willmore Conjecture} \cite{MaNeWillmore}, the proof of the {\it Freedman-He-Wang Conjecture} \cite{AMN}, the discovery of the density and the equidistribution of minimal hypersurfaces for generic metrics \cite{IMN,MNS}, and the proof by Song \cite{So2} of  {\it Yau's Conjecture} on the existence of infinitely many minimal surfaces (which builds on the earlier work by Marques and Neves \cite{MaNeinfinity}, where they settle the case when the ambient has positive Ricci curvature). 

The continuous setting of the min-max theory initiated in the work of Simon and Smith \cite{Smith} (the basic theory is presented in \cite{C&DL}). Important recent developments were achieved in \cite{Ketgenusbound,KeMaNe,MaNeindexbound}. This version has applications to the theory of Heegaard splittings of $3$-manifolds \cite{CGK,KLS} and to variational geometric problems \cite{MaNe,KLS}. 


Inequalities relating the "size" and the physical quantities of black holes (e.g. mass, charge and angular momentum) are a subject of great interest since the beginning of the theory of these objects. Recently, there was an increase in interest in those type of inequalities for other relativistic objects. The recent survey \cite{DG} is a nice and comprehensive introduction to this subject. In the specific case of inequalities relating area and charge, we highlight the works \cite{Gi,DJR,Sim,Kh} which deals with stable minimal surfaces, isoperimetric surfaces, stable MOTS and ordinary objects, respectively.

The paper is organized as follows. In Section \ref{SES} we clarify the physical context where electrostatic systems appear, and we present some general geometric properties of such systems. Next, in Section \ref{ME}, we present several standard model solutions to \eqref{sev1}-\eqref{sev3}, including a study of the index of their horizons. In Section \ref{top} we prove Theorem A. We give some preliminaries in Min-Max theory and present the proof of Theorems B and C in Section \ref{MMC}. The Section \ref{Area-Charge ineq} is devoted to prove the Area-charge inequality and the Theorems D and E. Finally, in the Appendix we discuss the existence and regularity of solutions of a Plateau type problem, necessary to the proof of Theorem B.\\

\noindent
{\bf Acknowledgements}. We would like to thank Lucas Ambrozio for several enlightening discussions about Static systems, and Fernando C. Marques and Andr\'e Neves for their interest in this work. V. Lima thanks Pedro Gaspar and Marco Guaraco for their patience and willingness to explain the details of the Min-max theory, and for their suggestions on the improvement of the text. The authors wish to thank to the organizers of the IX Workshop on Differential Geometry in Macei\'o, where part this research carried out.
T. Cruz and A. de Sousa also wish to express their gratitude to the organizers of the \( 2^{\circ} \) Encontro em Geometria Diferencial no Rio Grande do Sul, when part of this research also was carried out. A. de Sousa would like to thanks the invitation and the hospitality during the visit to the Instituto  de Matem\'atica e Estat\'istica of Universidade Federal do Rio Grande do Sul (UFRGS) in September 2018, when the first ideas of this work came up. 
A. de Sousa was financed by the Coordena\c{c}\~ao de Aperfei\c{c}oamento de Pessoal de N\'ivel Superior - Brasil (CAPES) - Finance Code 001. T. Cruz has been partially suported by  CNPq/Brazil grant 311803/2019-9. Finally, we thank the anonymous referee by its invaluable suggestions and corrections.

\section{ Electrostatic systems}\label{SES}

\subsection{Standard Static Electrovacuum Space-times}\label{App}

 Consider a Lorentzian 4-manifold $(\mathcal{M}^4,\mathfrak{g})$ and a 2-form $F$ on $\mathcal{M}$. The (source-free) Einstein-Maxwell equations with cosmological constant $\Lambda \in \re$ for the triple $(\mathcal{M}, \mathfrak{g}, F)$ are expressed by the following system\footnote{Using a geometric unit system.}
\begin{eqnarray}
  \Ric_{\mathfrak{g}}-\frac{R_{\mathfrak{g}}}{2} \mathfrak{g} + \Lambda \mathfrak{g} = 8\pi T_F,\label{Einstein}\\
  dF=0, \quad \diver_{\mathfrak{g}} F=0 \label{Maxwell}.
\end{eqnarray}
Here $T_F$ denotes the \emph{electromagnetic energy-momentum tensor}
\begin{equation}\label{energ.mom}
T_F = \frac{1}{4\pi}\biggl(F\circ F - \frac{1}{4}|F|_{\mathfrak{g}}^2\,\mathfrak{g}\biggr),
\end{equation}
where $(F\circ F)_{\alpha\beta} = \mathfrak{g}^{\mu\nu}F_{\alpha\mu}F_{\beta\nu}$. The solutions of the \eqref{Einstein}-\eqref{Maxwell} are called \emph{electrovacuum space-times}.


In what follows, we consider a particular class of electrovacuum space-times. A \emph{standard static space-time} is a pair $(\mathcal{M},\mathfrak{g})$ of the form
\begin{equation*}\label{spacetime}
\mathcal{M} = \re\times M, \ \mathfrak{g} = - V^2 dt^2 + g,
\end{equation*}
where $(M^3,g)$ is an oriented Riemannian $3$-manifold and $V:M\to\mathbb{R}$ is a positive smooth function. Since
\begin{align*}
  \mathcal{L}_{\partial_t} V = 0, \quad \mathcal{L}_{\partial_t} g = 0 \text{ and } |\partial_t|_{\mathfrak{g}}^2 = -V^2 < 0,
\end{align*}
it follows that $\partial_t$ is a time-like Killing field, which induces a time-orientation in the space-time, and each slice $M_t \equiv \{ t \} \times M$ is a space-like hypersurface orthogonal to Killing field, justifying the name static. Moreover, the slices are \emph{totally geodesic} (or \emph{time-symmetric}) and isometric to each other. Assume further  that the space-time admits an \emph{invariant electromagnetic field} with respect to $\partial_t$, i.e.,
\begin{align} \label{eq:invariance--electromagnetic_field}
  \mathcal{L}_{\partial_t} F = 0
\end{align}
has to be satisfied. It is worth highlighting that this Killing field induces an \emph{observer field}, namely, a future-oriented time-like unit field, and $n = V^{-1} \partial_t$ is called the \emph{static observer (field)}. Notice that $n$ coincides with the unit normal field along each slice $M_t$. 

The electromagnetic tensor $F$ admits a unique decomposition in terms of the \emph{electric field} $E$ and the \emph{magnetic field} $B$, as measured by the static observer~\cite[Section 6.4.1]{GourgoulhonFormalismGeneralRelativity2012},
\begin{align}\label{eq:electric_magnetic_field}
  E^{\flat}  = - \iota_n F,\quad
  B^{\flat} = \iota_n \star_{\mathfrak{g}} F, \quad F =   E^{\flat} \wedge dt + V^{-1} \iota_B \iota_{\partial_t} \omega_{\mathfrak{g}},
\end{align}
where $\omega_{\mathfrak{g}}$ and $\star_{\mathfrak{g}}$ denote, respectively, the volume form and the Hodge star operator on differential forms, with respect to the metric $\mathfrak{g}$, and $\iota$ denotes the (left) interior multiplication on tensors. By construction, since $F$ and its Hodge dual $\star_{\mathfrak{g}}F$ are antisymmetric tensors, we have
\[ \langle  {E, n} \rangle_{\mathfrak{g}} = \langle {B, n} \rangle_{\mathfrak{g}} = 0. \]
This means that the electric and magnetic fields are tangent to the slices $M_t$. Therefore, on each slice the metric duality in the equations \eqref{eq:electric_magnetic_field} coincides with that induced by the metric $g$. Furthermore, using the fact that $\partial_t$ is a Killing field we get that  \eqref{eq:invariance--electromagnetic_field} is equivalent to
\begin{align*}
  \mathcal{L}_{\partial_t} E = 0 \text{ and } \mathcal{L}_{\partial_t} B = 0.
\end{align*}

A computation shows that on a standard static electrovacuum space-time
the equations \eqref{Einstein} and \eqref{Maxwell} are equivalent
to the following system on $(M,g)$
\cite[Sections 5.1 and 6.4]{GourgoulhonFormalismGeneralRelativity2012}:
\begin{align*}
  \hess_{g}V &= V\left[ {\Ric_g} - \Lambda g + 2\bigl( E^{\flat}\otimes E^{\flat} + B^{\flat}\otimes B^{\flat} \bigr)
               - \left(|E|^2 + |B|^2 \right)g \right],\\
  R_g &=  2\left(\Lambda + |E|^2 + |B|^2\right),\\
  \iota_B \iota_E\,\omega_{g} &= 0,\\
  \diver_{g}E &= 0, \ \curl_g (VE) = 0, \\
  \diver_{g}B &= 0, \ \curl_g (VB) = 0,
\end{align*}
where $\omega_{g}$ denotes the Riemannian volume form with respect to the metric $g$.
It follows from the third equation equation above that there is a function $\sigma$ such that $B = \sigma E$. Observe that
\begin{align*}
0 = \diver_g (\sigma E) = g(\nabla \sigma,E),\quad 0 = \curl_g (\sigma VE) = V\nabla \sigma\times E.
\end{align*}
Since $V > 0$, we obtain $\nabla \sigma = 0$. Thus $\sigma$ is constant.

Therefore, the system of equations can be rewritten in the simplified form:
\begin{align}
  \hess_{g}V &= V\left[ {\Ric_g} - \Lambda g
               + 2 X^{\flat}\otimes X^{\flat}
               - |X|^2 g\right], \label{stelvc1_simplified}\\
  R_g &=  2\left[ \Lambda + |X|^2 \right],
        \label{stelvc2_simplified}\\
  \diver_{g}X &= 0, \ \curl_g (VX) = 0 \label{stelvc4_simplified},
\end{align}
where $X = \sqrt{1 + \sigma^2}\,E$.

\begin{rem}
  The fact that \( B = \sigma E \) may sound puzzling
  for the reader, since it is well-known that for
  electromagnetic waves solving Maxwell's equations in Minkowski spacetime we have that $E$ and $B$ are orthogonal.
  However, in this case neither $F$ is static, nor the coupled Einstein-Maxwell equations are satisfied (the Maxwell equations hold, however the spacetime is flat).
\end{rem}

Taking the metric trace in \eqref{stelvc1_simplified}, one deduce that \eqref{stelvc2_simplified} is equivalent to
\begin{align}
  \Delta_g V = \left[ (1 + \sigma^2) |E|^2 - \Lambda \right] V. \label{stelvc6}
\end{align}

For simplicity, take \( B \equiv 0 \), equivalently, \( \sigma = 0 \).
So, the previous equations reduce to \eqref{sev1}, \eqref{sev2}
and \eqref{sev3}.

\begin{rem}
Observe that the equations \eqref{stelvc1_simplified}, \eqref{stelvc4_simplified} and \eqref{stelvc6} have the same format that \eqref{sev1}, \eqref{sev2} and \eqref{sev3}. As a consequence, the results proved in this work continue to hold in the case of systems arising from static electrovacuum spacetimes where we have a pure magnetic field, or where both the electric and the magnetic fields are present.
\end{rem}

\subsection{General properties of electrostatic systems} \label{sec:gpes}


\begin{lemma}\label{lemma:propert}
Consider $(M^3,g)$ be a Riemannian manifold, $E \in \mathfrak{X}(M)$ and $V\in C^{\infty}(M)$ satisfying the following system of equations:
  \begin{align}
  \hess_{g} V & = V(\Ric_{g} - \Lambda g+2E^\flat\otimes E^\flat-|E|^2g), \label{eq1} \\
  \Delta_{g} V & = \left( |E|^2 - \Lambda \right)V, \label{eq2}
 \end{align}
for some constant $\Lambda \in \re$. Suppose $V$ is not identically zero and  $\protect{\Sigma=V^{-1}(0)}$ is nonempty, then:
\renewcommand{\labelenumi}{\roman{enumi})}
\begin{enumerate}
 \item $R_g= 2|E|^2 + 2\Lambda$.
 \item $\Sigma$ is a totally geodesic hypersurface and $|\nabla_g V|$ is a positive constant on each connected component of $\Sigma$;
 \item If $\curl_g (VE) = 0$, then $E$ and $\nabla_g V$ are linearly dependent along $\si$.
\end{enumerate}
\end{lemma}

\begin{proof}  Taking the trace of (\ref{eq1}), we obtain
 \begin{equation*}
  \Delta_{g} V=(R_g-3\Lambda-|E|^2)V,
 \end{equation*}
 which compared with \eqref{eq2} implies $R_g-2|E|^2=2\Lambda.$
 
For item ii), suppose that $\nabla_g V(p) = 0$ for some  $p\in \Sigma$, then along any geodesic $\gamma(t)$ starting from $p,$ the function $V(t)= V(\gamma(t))$ satisfies
 \begin{align*}
    V''(t) &=\hess_{g} V(\gamma'(t),\gamma'(t))=\phi(t)V(t),
 \end{align*}
where $\phi(t) := \left( \Ric_{g} - \Lambda g+2E^\flat\otimes E^\flat-|E|^2g \right)\left( \gamma'(t), \gamma'(t) \right)$. Since $V'(0)=V(0)=0$, we have that $V$ is identically zero near $p$ by uniqueness for solutions to second order ODE's. By using analytic continuation of solutions of elliptic equations (see for instance \cite{Aron}) with respect to the equation \eqref{eq2}, we can conclude that $V$ vanishes identically on $M$. But this contradicts  the fact that $V$ is nontrivial on $M$.  Therefore  $\nabla_g V(p)\neq 0,$ for all $p\in \Sigma,$  which implies that $\Sigma$ is an embedded surface. Moreover, given  any tangent vectors $X, Y$  to $\Sigma,$ we see  that $\hess_{g} V(X,Y)=0$ and $\hess_{g} V(X,\nabla_g V)=0$ on $\Sigma,$ then we obtain that $\Sigma$ is totally geodesic and $|\nabla_g V|^2$ is a constant along $\Sigma$.
Hence the assertion ii) follows.

Finally, observe that $\curl_g(VE)=0$ holds if, only if,
\( d( V E^{\flat} ) = 0 \). So
\begin{equation*}\label{conseqcurl}
    dV \wedge E^\flat + V dE^\flat = 0.
\end{equation*}
If $V=0$, it follows that $dV \wedge E^\flat = 0$. Thus, item iii) holds.
\end{proof}

In the following, \( (DR_g)^{\ast} \) denotes the formal adjoint linearization of the scalar curvature, i.e.,  \((DR_g)^{\ast}(V) = \hess_g V - \Delta_g V g - V \Ric_g\).

\begin{prop}
  Consider \( (M^3, g, V, E) \) as in Lemma \ref{lemma:propert}, and such that  
 $$\diver_g E = 0,\quad \curl_g (VE) = 0.$$ 
 Then, the equations  \eqref{eq1}
  and \eqref{eq2} are equivalent to the following second order overdetermined
  elliptic equation:
  \begin{align}
    \hess_g V - \Delta_g V g - V \Ric_g
    = 2V \left( E^\flat\otimes E^\flat - |E|^2 \ g \right) \label{eq:overdeterm}
  \end{align}
  In particular, \( V \in \mathrm{Ker}(DR_g)^{\ast}\) if, and only if, \( E \equiv 0 \).
\end{prop}
\begin{proof}
  The equations \eqref{eq1} and \eqref{eq2} imply \eqref{eq:overdeterm}
  immediately.

  Let us prove the converse. Taking the metric trace in \eqref{eq:overdeterm}
 we obtain
  \begin{displaymath}
    \Delta_g V = \left( 2 |E|^2 - \frac{R_g}{2} \right) V.
  \end{displaymath}
  On the other hand, taking the divergence of \eqref{eq:overdeterm},
  it follows that
 \begin{align*}
  \diver_g(\hess_g V) =& \Ric_g(\nabla_g V,\cdot)+V\frac{dR_g}{2}+d(\Delta_g V)\\
                       & + 2\langle\nabla_g V,E\rangle E^{\flat}
                         + 2V(\nabla_E E)^{\flat} - 2d(|E|^2V),
 \end{align*}
 where we have used the equation $\diver_g E=0$. Using the identity
 \begin{displaymath}
   \diver_g(\hess_g V)=d(\Delta_g V)+ \Ric_g(\nabla_g V,\cdot),
 \end{displaymath}
 the previous equation can be rewritten in the form,
 \begin{displaymath}
   V d\left( |E|^2 - \frac{R_g}{2} \right) = 2 \left(
   \langle\nabla_g V,E\rangle E^{\flat} + V(\nabla_E E)^{\flat} - |E|^2 dV \right)
   - V d|E|^2.
 \end{displaymath}
 A computation shows that the right side is exactly \( 2 \iota_E d(V E^{\flat}) \).
 Thus,
 \begin{displaymath}
   V d\left( |E|^2 - \frac{R_g}{2} \right) = 2 \iota_E d(V E^{\flat}) = 0,
 \end{displaymath}
 where it was used the fact that the equation $\curl_g (VE) = 0$ holds if, and
 only if, \( d(V E^{\flat}) = 0 \). Therefore, \(  |E|^2 - \frac{R_g}{2} \) is a constant. 
 Putting this together with the identity for laplacian,
 it follows that the equation \eqref{eq:overdeterm} implies 
 \eqref{eq1} and \eqref{eq2}.

 In particular, \( V \in \mbox{Ker}(DR_g)^{\ast}\)  if, and only if, \( 
 V \left( E^\flat\otimes E^\flat - |E|^2 \ g \right) = 0 \). Taking the metric trace,
we conclude that $ E^\flat\otimes E^\flat - |E|^2 \ g = 0$  is equivalent to \( E \equiv 0 \).

It remains to prove that \eqref{eq:overdeterm} is a second order overdetermined elliptic equation. Indeed, consider the operator
  \begin{displaymath}
    Pu = \hess_{g} u - \Delta_{g} u g - u\Ric_{g}
    - 2u \left( E^\flat\otimes E^\flat - 2|E|^2 g \right)
\end{displaymath}
on \( M \), whose principal symbol is
\( \left[\sigma_{\xi}\left(P\right)_p u\right]_{i j}
= \left(-|\xi|^{2}g_{i j}+\xi_{i} \xi_{j}\right)u \), for \(\xi\in T_p^*M \).
This map is injective for $\xi\neq 0.$  Hence, $P$ is overdetermined elliptic.
\end{proof}

\section{Standard Models}\label{ME}

\subsection{List of models}

In this section, we will describe some examples of electrostatic systems. The models are indexed by three parameters: $\Lambda>0$, $m \geq 0$ and $Q \in \re$, which are interpreted respectively as the cosmological constant, the mass and the charge of the associated spacetimes. These constants satisfy (see \cite{BOU})
$$
m^2\leq\frac{1}{18\Lambda}\Big[1+12Q^2\Lambda+(1-4Q^2\Lambda)^{3/2}\Big].
$$
In particular $m^2\Lambda\leq 2/9$ and $Q^2\Lambda \leq 1/4$. In Figure \ref{Rotulo}, we present an illustrative graph relating the models with the parameters $(Q,m,\Lambda).$

Now, we present the list of the models. In each case we describe $(M,g,V,E)$, where $M = \{x;\, V(x) \geq 0\}$ and $g$ is the metric on $\{x;\, V(x) > 0\}$. Additionally we present the values of $m$ and $Q$. In the following, $g_{\mathbb{S}^n}$ denotes the metric on $\mathbb{S}^n$ of constant curvature $1$, while $\rho$ is a constant such that
$$m=\rho\left(1-\frac{2}{3} \Lambda \rho^{2}\right)\quad\mbox{and}\quad   Q^2=\rho^2\left(1-\Lambda \rho^{2}\right).$$

\begin{figure}[!htb]
\centering
\includegraphics[scale=0.21]{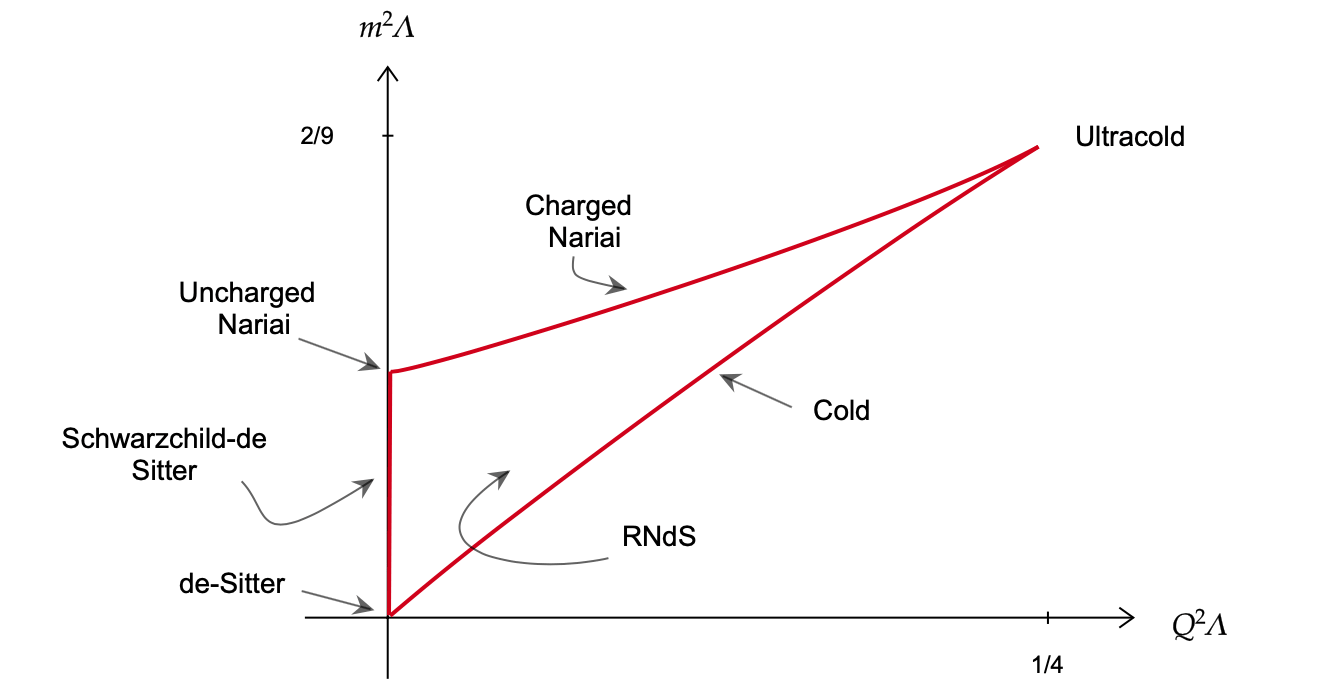}
\caption{
Standard models. The plot is of the dimensionless quantities $m^2\Lambda$ vs. $Q^2\Lambda$.}
\label{Rotulo}
\end{figure}

\begin{itemize}
\item \textbf{de Sitter system}:
\begin{align*}
m = 0, \quad Q=0,\quad M = \mathbb{S}^{3}_{+},\quad g=\frac{3}{\Lambda}\,g_{\mathbb{S}^3},\quad V = x_4, \quad E=0;
\end{align*}

\item \textbf{Schwarzschild-de Sitter system}: 
\begin{align*}
&0 < m < \frac{1}{3\sqrt{\Lambda}}, \quad Q=0,\\
&M = [r_+,r_c]\times\mathbb{S}^{2},\quad g=\left(1-\frac{2 m}{r}-\frac{\Lambda r^2}{3}\right)^{-1}dr^2 + r^2g_{\mathbb{S}^2}\\
&V = \left(1-\frac{2 m}{r}-\frac{\Lambda r^2}{3}\right)^{\frac{1}{2}}, \quad E=0;
\end{align*}

\item \textbf{Nariai system}: 
\begin{align*}
&m = \frac{1}{3\sqrt{\Lambda}}, \quad Q=0,\quad M = [0,\pi/\alpha]\times\mathbb{S}^{2},\quad g=ds^2 + \rho^2g_{\mathbb{S}^2}\\
&V = \sin(\alpha s), \quad E=0, \quad \alpha = \sqrt{\Lambda};
\end{align*}

\item \textbf{Reissner-Nordstr\"om-de Sitter system}: 
\begin{align*}
&0 < m^2 < \frac{1}{18\Lambda}\Big[1+12Q^2\Lambda+(1-4Q^2\Lambda)^{3/2}\Big],\quad 0 < Q^2 \leq \frac{1}{4\Lambda},\\
&M = [r_+,r_c]\times\mathbb{S}^{2},\quad g=\left(1-\frac{2 m}{r}+\frac{Q^2}{r^2}-\frac{\Lambda r^2}{3}\right)^{-1}dr^2 + r^2g_{\mathbb{S}^2}\\
&V = \left(1-\frac{2 m}{r}+\frac{Q^2}{r^2}-\frac{\Lambda r^2}{3}\right)^{\frac{1}{2}}, \quad E=\frac{Q}{r^2}\,V(r)\,\partial_r;
\end{align*}

\item \textbf{Charged Nariai system}:
\begin{align*}
&0 < m^2 = \frac{1}{18\Lambda}\Big[1+12Q^2\Lambda+(1-4Q^2\Lambda)^{3/2}\Big], \quad 0 < |Q| \leq \sqrt{\Lambda}\,\rho^2,\\
&\frac{1}{2\Lambda} < \rho^2 < \frac{1}{\Lambda}, \quad M = [0,\pi/\alpha]\times\mathbb{S}^{2},\quad g=ds^2 + \rho^2g_{\mathbb{S}^2}\\
&V = \sin(\alpha s), \quad E=\frac{Q}{\rho^2}\,\partial_s, \quad \alpha = \sqrt{\Lambda - Q^2/\rho^4};
\end{align*}

\item \textbf{Cold black hole system}:
\begin{align*}
&0 < m^2 = \frac{1}{18\Lambda}\Big[1+12Q^2\Lambda+(1-4Q^2\Lambda)^{3/2}\Big], \quad |Q| \geq \sqrt{\Lambda}\,\rho^2 ,\\
&0 < \rho^2 < \frac{1}{2\Lambda}, \quad M = [0,+\infty)\times\mathbb{S}^{2},\quad g=ds^2 + \rho^2 g_{\mathbb{S}^2}\\
&V = \sinh(\beta s), \quad E=\frac{Q}{\rho^2}\,\partial_s,\quad \beta = \sqrt{Q^2/\rho^4-\Lambda};
\end{align*}

\item \textbf{Ultracold black hole system}:
\begin{align*}
&m = \frac{1}{3}\left(\frac{2}{\Lambda}\right)^{\frac{1}{2}},\quad Q^2 = \frac{1}{4\Lambda} = \rho^2,\quad M = \left[0,+\infty\right)\times\mathbb{S}^{2},\\ 
&g=ds^2 + \rho^2g_{\mathbb{S}^2},\quad V = s, \quad E=\sqrt{\Lambda}\,\partial_s.
\end{align*}
\end{itemize}

In the following subsections we describe how to obtain the examples listed above from known space-times and how they can be extended to complete systems. In addition, we present a quotient of the Nariai system in subsection \ref{quotients}.

\subsection{The Reissner-Nordstr\"om solution}
The Reissner-Nordstr\"om black hole space-time in dimension $3+1$, of mass parameter $m > 0,$ electric charge $Q \in \re$ and cosmological constant $\Lambda \in \re,$ is described by the metric 
\begin{equation}\label{model}
\mathfrak{g}=-\left(1-\frac{2m}{r}+\frac{Q^2}{r^2}-\frac{\Lambda r^2}{3}\right)dt^2+\frac{dr^2}{\left(1-\frac{2m}{r}+\frac{Q^2}{r^2}-\frac{\Lambda r^2}{3}\right)}+r^2g_{\sw},
\end{equation}
 where $(\sw, g_{\sw})$ denotes the  round sphere, $r$ is a radial parameter varying in a suitable open set $I\subset(0,\infty)$, and $t \in \re.$
These metrics correspond to spherically symmetric Static Electrovacuum space-times (as defined in Subsection \ref{App}), with potential and electromagnetic tensor given respectively by
$$V(r) = \biggl(1-\frac{2m}{r}+\frac{Q^2}{r^2}-\frac{\Lambda r^2}{3}\biggr)^{\frac{1}{2}}, \ F = \frac{Q}{r^2}\,dr\wedge dt.$$

From now on, assume $\Lambda>0$. The solution given by \eqref{model} is the so called  {\it  Reissner-Nordstr\"om-de Sitter (RNdS) space-time}. In this setting, the set $I$ depends on the solution of the following equation: 
\begin{equation}\label{lapse}
1-\frac{2m}{r}+\frac{Q^2}{r^2}-\frac{\Lambda r^2}{3}=0 \ \Leftrightarrow \ \frac{\Lambda}{3}r^4 - r^2 + 2mr - Q^2 = 0.
\end{equation}
This polynomial equation has four solutions, which from now on we assume are all real and distinct. Using Vieta's formulas we conclude that one of the roots is negative. We denote by $r_c > r_+ > r_-$ the positive roots of \eqref{lapse}. The physical significance of these numbers is that $\{r = r_-\}$ is the inner (Cauchy) black hole horizon, $\{r = r_+\}$ is the outer (Killing) black hole horizon and $\{r = r_c\}$ is the cosmological horizon, and the smallest root  has no physical significance, since is negative.

Now, we want to consider the slice $\{t = 0\}$. Observe that $1-\frac{2m}{r}+\frac{Q^2}{r^2}-\frac{\Lambda r^2}{3}$ is positive in the region where $r \in (r_+,r_c)$, and it is negative in the region where $r \in (r_-,r_+)\cup(r_c,+\infty)$. So, in order to obtain a Riemannian manifold associated to an electrostatic system, we consider $(r_+,r_c)\times \sw$, endowed with the metric 
\begin{equation}\label{rnds}
g_{m,Q,\Lambda} = \left(1-\frac{2m}{r}+\frac{Q^2}{r^2} -\frac{\Lambda r^2}{3}\right)^{-1}dr^2+r^2g_{\sw}.
\end{equation}
We point out that this metric is not defined when $r\in\{r_+,r_c\},$ but these are just coordinate singularities and \eqref{rnds} can be  extended smoothly to $[r_+,r_c]$, as we shall see.

The electric field is given by $E = \frac{Q}{r^2}V(r)\,\partial_r$. In a spherical slice, $S_r=\{r\}\times \sw$, where $r \in (r_+,r_c),$ a unit normal is
$N_r = V(r)\,\partial_r$. So it is easy to see that for all $r \in (r_+,r_c)$, the charge of the slice $S_r$ with respect to $N$ (as defined in the introduction and in Section \ref{Area-Charge ineq}) is equal to $Q$.

In order to extend the metric \eqref{rnds} to include the horizon boundaries, we have to perform the following change of variables 
$$
s(r) :=\int_{r_{+}}^{r}\left(1-\frac{2 m}{t}+\frac{Q^{2}}{t^{2}}-\frac{\Lambda t^2}{3}\right)^{-1 / 2} d t.
$$

We have $\frac{ds}{dr} = \frac{1}{V(r)} > 0$, for $r \in (r_+,r_c)$, so by the Inverse Function Theorem there is $u:(0, a) \rightarrow\left(r_{+}, r_{c}\right)$ which is the inverse of $s(r)$. By continuity, $u$ extends to $[0, a]$ with $u(0) = r_{+}$, $u(a) = r_{c}.$
This allow us to rewrite the metric \eqref{rnds} as, 
\begin{equation*}\label{ERN}
g_{m,Q,\Lambda}=d s^{2}+u(s)^{2}g_{\sw},  \ \textrm{on} \  [0,a] \times \sw.
\end{equation*}

Now we want to extend the metric \eqref{ERN} to a complete smooth metric on $\re \times \sw$. The idea is to define
\begin{equation*}\label{FMRN}
g = d s^{2}+v(s)^{2}g_{\sw},  \ \textrm{on} \  \re \times \sw,
\end{equation*}
where $v: \re \to (0,+\infty)$ is a smooth periodic function such that $v|_{[0,a]} \equiv u$.

First define $v$ in $[0,2a]$ as
\begin{equation*}
v(s)=
 \left \{
\begin{array}{cc}
u(s),& s\in [0,a]\\ 
\tilde u(s),& s\in [a,2a]\\
\end{array}
\right.,
\end{equation*}
where $\tilde{u}(s)=u(2a-s).$
We have to prove that the right and left derivatives of order $k$ of $v$ at $a$ coincide, for all $k \in \mathbb{N}$.

One can check that 
\begin{eqnarray*}
u^{\prime}(s) &=& \left(1-\frac{2 m}{u(s)}+\frac{Q^{2}}{u(s)^{2}}-\frac{\Lambda u(s)^2}{3}\right)^{1 / 2} = V\left(u(s)\right),\label{firstd}\\
u^{\prime \prime}(s) &=& \left(\frac{ m}{u^2(s)}-\frac{Q^{2}}{u(s)^{3}}-\frac{\Lambda u(s)}{3}\right)=:G(u(s)),\label{secondderi}
\end{eqnarray*}
where $G$ is a real valued function. For the others derivatives, we can use the Fa\`a di Bruno's formula
\begin{equation*}\label{faa}
\frac{d^{k}}{d s^{k}} G(u(s))=\sum \frac{k !}{m_{1} ! m_{2} ! \cdots m_{k} !} \cdot G^{(m)}(u(s)) \cdot \prod_{j=1}^{k}\left(\frac{u^{(j)}(s)}{j !}\right)^{m_{j}},
\end{equation*}
where $m=m_{1}+\cdots+m_{k}$, and the sum is taken over all partitions of $k$, that is, collections of non negative integers $m_1, \ldots, m_k$ satisfying the constraint 
\begin{equation*}\label{constraint}
m_1+2m_2+\cdots +k\cdot m_k=k.
\end{equation*}

By the chain rule, 
 $\frac{d^k}{ds^k}\tilde u(a^-)=(-1)^k\frac{d^k}{ds^k}u(a^+).$ Thus $\frac{d^k}{ds^k}\tilde u(a^-)= \frac{d^k}{ds^k}u(a^+)$ for $k$ even.
We will prove by induction that
\begin{equation*}\label{indhp}
 \frac{d^k}{ds^k}u(a^+)=0=\frac{d^k}{ds^k}\tilde u(a^-),
\quad \forall \ \mbox{k $\in\{2p-1,\;p\in\mathbb{N}\}.$ }   
\end{equation*}
For $k=1$, using \eqref{firstd} we obtain $u'(a^+)=V(r_c)=0=-V(r_c)=\tilde u'(a^-)$.  For $k=3,$ we see that $$u'''(a^+)=G'(u(a^+))\cdot u'(a^+)=0=G'(\tilde u(a^-))\cdot \tilde u'(a^-)=\tilde u'''(a^-).$$
Suppose that \eqref{indhp} holds for $k=1,3,\cdots,2p-1.$ By \eqref{faa} we have
\begin{eqnarray*}
    \frac{d^{2p+1}}{ds^{2p+1}}u(a^+)&=&\frac{d^{2p-1}}{ds^{2p-1}}[G(u(a^+))]\\
    &=& G^{(2p-1)}(u(a^+))u^{(2p-1)}(a^+)+O(a^+),
\end{eqnarray*}
where $O(a^+)$ is a sum of terms each of which is a product of factors, including a right hand derivative of $u$ at $a$ of order $k \in \{1,3,\cdots,2p-1\}$ (this holds because $m_1+2m_2+\cdots +(2p-1)m_{2p-1}=2p-1$ implies that at least one of the $m_i$'s is odd and less than or equal to $2p-1$). So, by the induction assumption, $\displaystyle\frac{d^{2p+1}}{ds^{2p+1}}u(a^+)=0$. An analogous computation give us $\displaystyle\frac{d^{2p+1}}{ds^{2p+1}} \tilde u(a^-)=0$. Thus, \eqref{indhp} holds. 

Finally, extend $v$ periodically for all $s \in \re$ putting $v(s + 2a) = v(s), \forall \ s \in \re$. Consider $s_n = 2na, n \in \mathbb{Z}$. Reasoning as before one can prove that the right and left derivatives of order $k$ of $v$ at $s_n$ coincide, for all $k \in \mathbb{N}$. Therefore $v$ is a smooth function.

Define $M_n = [na,(n+1)a]\times\sw$, $n \in \mathbb{Z}$. It follows from the definitions of $v$ and $g$ that $\left(M_n,g\right)$ is isometric to $\left(M_0,g\right)$, $\forall \ n \in \mathbb{Z}$, and this last submanifold corresponds to the original model. So, defining 
$$V(s,x) = \left(1-\frac{2 m}{v(s)}+\frac{Q^{2}}{v(s)^{2}}-\frac{\Lambda v(s)^2}{3}\right)^{\frac{1}{2}}, \ E(s,x) = \frac{Q}{v(s)^2}\,\partial_s,$$
$(\re\times\sw,g,V,E)$ is a complete electrostatic System, which we call the de-Sitter Reissner-Nordstr\"om system. Considering $Q = 0$, we obtain the Schwarschild-de Sitter system.

\subsection{The de Sitter solution}

The de Sitter solution is given by \eqref{model}, with $Q=m=0$. In particular $F \equiv 0$ and we are in the vacuum case. Performing the coordinate transformation $r=\sqrt{\frac{3}{\Lambda}}\cos s$, we obtain
\begin{equation}\label{deSitter}
\mathfrak{g} = - \sin^{2}s\,dt^{2} + \frac{3}{\Lambda}\left(ds^{2} + \cos^2s\,g_{\sw}\right)
\end{equation}
The metric on the slices $\{t = \textrm{constant}\}$ is well defined for $s \in (-\pi/2,\pi/2)$, and it is the metric of a round three-sphere of radius $(\Lambda/3)^{-1/2}$ in the rotationally symmetric form. This expression excludes two points of the manifold, however with a further change of coordinates we cover these points and obtain $g=\frac{3}{\Lambda}\,g_{\mathbb{S}^3}$, $V = x_4,$ where $x_4$ is the standard coordinate on $\mathbb{R}^4$.

\subsection{The (charged) Nariai solution}\label{Nariai}

We will now describe a model having a certain double root of the potential $V$.

First of all, suppose that $r_1$ and $r_2$, with $r_1>r_2$, are positive zeros of $V(r)=0.$  Then, the equations $V(r_1)=0$ and  $V(r_2)=0$ can be  rearranged to 
\begin{equation*}
2m=(r_1+r_2)\left(1-\frac{r_1^{2}+r_2^{2}}{3} \Lambda\right)\quad\mbox{and}\quad   Q^{2}=r_1r_2\left(1-\frac{r_1^{2}+r_1r_2+r_2^{2}}{3} \Lambda\right).    
\end{equation*}

Assume that there is a double root $\rho$ of $V(r)$. We have
\begin{eqnarray}
 V(r)^2 = \frac{(r-\rho)^{2}}{r^2}\left(1-\frac{1}{3} \Lambda\left(r^{2}+2 \rho r+3 \rho^{2}\right)\right),\label{newpot}\\
 m=\rho\left(1-\frac{2}{3} \Lambda \rho^{2}\right)\quad\mbox{and}\quad   Q^2=\rho^2\left(1-\Lambda \rho^{2}\right). \label{masscharge}
\end{eqnarray}
  
Since the  cosmological constant $\Lambda$ is positive, we have that $\rho\in \left(0,\frac{1}{\sqrt{\Lambda}}\right).$ Moreover, the other positive root of $V$ is equal to
$$\rho_*=\left( \frac{3}{\Lambda}-2\rho^2\right)^{1/2}-\rho.$$

The {\it charged Nariai spacetime} is a special solution obtained if  $\frac{1}{2\Lambda}<\rho^2<\frac{1}{\Lambda}$. We point out that this  is equivalent to $r_+=r_c=\rho.$ Now, if  $\rho^2\in\left(0,\frac{1}{2\Lambda}\right)$, which  is equivalent to $r_+=r_{-}=\rho,$ this model is called of  {\it cold black hole}. The case where  $r_-=r_+=r_c=\frac{1}{\sqrt{2\Lambda}}$ is called of {\it ultra cold black hole}. See \cite{Rom} for the reasons behind this nomenclature.

In the case of the charged Nariai the potential $V$ degenerates, since $r_+ = r_c$. So, it is necessary to make a suitable coordinate transformation and rewrite the metric \eqref{model}. Let $\rho + \epsilon > \rho - \epsilon > r_{-} > 0 > r_{*}$ be the roots of $V$. Then 
$$V(r)^2 =-\frac{\Lambda}{3 r^{2}}(r - \rho -\epsilon)(r - \rho +\epsilon)\left(r-r_{-}\right)\left(r-r_{*}\right).$$

If one perform the following  coordinate transformation in \eqref{model}: 
\begin{equation*}
r= \rho + \epsilon \cos (\alpha s)
\quad\mbox{and}\quad \tau = \alpha\epsilon t,
\end{equation*}
where $\alpha = \displaystyle\sqrt{\frac{\Lambda\left(\rho - r_{-}\right)\left(\rho - r_{*}\right)}{3\rho^2}}$, one obtains letting $\epsilon\to 0$ that
\begin{eqnarray}\label{nar-cilinder}
\mathfrak{g} &=& -\sin^{2}(\alpha s)\,d\tau^{2} + ds^{2} + \rho^2 g_{\sw},\\
F &=& \frac{Q\sin (\alpha s)}{\rho^2} ds\wedge d\tau,
\end{eqnarray}
where $\tau \in \re$ and $s \in (0,\pi)$. Since $r_{+}$ and $r_{-}$ are simple roots of $V$ and $\rho$ is a double root, by using Vieta's formulas in \eqref{lapse}, we see that $r_{*} + r_{-} = -2\rho$ and $r_{*}r_{-} = -\frac{3Q^2}{\Lambda\rho^2}$, thus $\alpha = \sqrt{\Lambda - Q^2/\rho^4}$.

The horizons are now located in $s=0$ and $s = \pi/\alpha$. Each slice $\{\tau = \ \textrm{constant}\}$ is a cylinder with a standard product metric, which extend smoothly for all $s \in \re$. So, $(\re\times\sw,g,V,E)$ is an electrostatic system, where $g = ds^{2} + \rho^2 g_{\sw}$, $V(s) = \sin(\alpha s)$ and $E(s) = \frac{Q}{\rho^2}\,\partial_s$. In the special case $Q = 0$, we have the Nariai system.

\subsection{A quotient of the Nariai system}\label{quotients}

As pointed out in \cite[Section 7]{A} there is an interesting quotient of the Nariai system. Consider the map $\mathcal{I}: [0,\pi/\alpha]\times\mathbb{S}^{2} \to [0,\pi/\alpha]\times\mathbb{S}^{2}$ defined by $\mathcal{I}(s,x)=(-s+\pi/\alpha,-x)$, and the group $\Gamma$ generated by $\mathcal{I}$, which is isomorphic to $\mathbb{Z}_2$. Observe that $M = \big([0,\pi/\alpha]\times\mathbb{S}^{2}\big)/\Gamma$ is homeomorphic to $\mathbb{RP}^3$ minus an open ball. The metric $g$ induces a metric $\widetilde{g}$ in the quotient. Moreover $V$ satisfies $V\big(\mathcal{I}(x)\big) = V(x)$, so $\widetilde{V}([x]) = V(x)$ is well defined and satisfies the static equations in $\big(([0,\pi/\alpha]\times\mathbb{S}^{2})/\Gamma,\widetilde{g}\big)$. The boundary of $M$ is a $2$-sphere and coincides with the set $\widetilde{V}^{-1}(0)$.
A similar example in the case $Q \neq 0$ can not be obtained since the electric field is not invariant by $\mathcal{I}$.


One can check that the double of 
$\big(([0,\pi/\alpha]\times\mathbb{S}^{2})/\Gamma,\widetilde{g},\widetilde{V}\big)$ is smooth, and so it is a static system where  the manifold is homeomorphic to $\mathbb{RP}^3\#\mathbb{RP}^3$. Alternatively, this model can be obtained as a quotient of the Nariai system, defining $\Gamma$ as the group generated by $\mathcal{I}$ and the map $\mathcal{R}:\mathbb{R}\times\mathbb{S}^{2} \to \mathbb{R}\times\mathbb{S}^{2}$ defined by $\mathcal{R}(s,x)=(-s-\pi/\alpha,-x)$.

\subsection{The cold black hole solution}

In the case of the cold black hole solution, reasoning as in the previous case, one perform the following coordinate transformation in \eqref{model}: 
\begin{equation*}
r= \rho + \epsilon \cosh (\beta s)
\quad\mbox{and}\quad \tau = \beta\epsilon t,
\end{equation*}
where $\beta = \displaystyle\sqrt{Q^2/\rho^4 - \Lambda}$. Letting $\epsilon\to 0$ one obtains
\begin{eqnarray}\label{cold}
\mathfrak{g} &=& -\sinh^{2}(\beta s)\,d\tau^{2} + ds^{2} + \rho^2 g_{\sw},\\
F &=& \frac{Q\sinh (\beta s)}{\rho^2}\,ds\wedge d\tau.
\end{eqnarray}
We conclude that $(\re\times\sw,g,V,E)$ is an electrostatic system, where $g = ds^{2} + \rho^2 g_{\sw}$, $V(s) = \sinh(\beta s)$ and $E(s) = \frac{Q}{\rho^2}\,\partial_s$, with a horizon located at $s=0$.

\subsection{The ultra cold black hole solution}

In the case of the ultra cold black hole solution, one perform the following coordinate transformation in \eqref{model}: 
\begin{equation*}
r= \frac{1}{\sqrt{2\Lambda}} + \epsilon \cos \left(\sqrt{\frac{4\epsilon(2\Lambda)^{3/2}}{3}} s\right)
\quad\mbox{and}\quad \tau = \frac{4(2\Lambda)^{3/2}}{3}\epsilon^2 t.
\end{equation*}
Letting $\epsilon\to 0$ one obtains
\begin{eqnarray}\label{ultracold}
\mathfrak{g} &=& -s^2d\tau^{2} + ds^{2} + \frac{1}{2\Lambda} \,g_{\sw},\\
F &=& \sqrt{\Lambda}\,s\,ds\wedge d\tau.
\end{eqnarray}
Thus, $(\re\times\sw,g,V,E)$ is an  electrostatic system, where $g = ds^{2} + \frac{1}{2\Lambda} g_{\sw}$, $V(s) = s$ and $E(s) = \sqrt{\Lambda}\,\partial_s$, with a horizon located at $s=0$.

\subsection{The index of the horizons in the models}\label{sec:index}

Consider a closed minimal surface $\si\subset (M,g).$  If $\si$ is two-sided,  that is, there exists a globally defined unit normal vector field $N$ on $\si$, then the second variation formula with respect to $X=fN$ is given by:
\begin{equation}\label{index}
    \frac{d^{2}}{d t^{2}}\Big|_{t=0} |\Sigma_{t}| =\int_{\Sigma}\big[|\nabla_{\si} f|^{2}-\left(\Ric_g(N, N)+|A|^{2}\right) f^{2}\big]\,d\mu  =: B(f)
\end{equation}
where $\nabla_{\si}$ is the gradient operator on $\si$ and $A$ denotes the second fundamental form of $\si.$  
The quadratic form $B$ is naturally associated to a second-order differential operator, the {\it Jacobi operator} of $\si$
\begin{equation*}\label{jacobi}
L_{\si}=\Delta_{\si}+ \Ric_g(N,N)+|A|^2.
\end{equation*}
The Morse index of $\si$ is defined as the number of negative eigenvalues of $L_{\si}$. If one of the eingenvalues is equal to zero, we say $\si$ is \emph{degenerate}.

We say that $\si$ is stable if $B(f) \geq 0$, $\forall \ f \in C^{\infty}(\si)\setminus \{0\}$. This is equivalently to $\mathrm{index}(\si) = 0$, and to $\lambda_1 \geq 0$, where $\lambda_1$ is the first eigenvalue of $L_{\si}$. If $\lambda_1 > 0$, then $\si$ is said to be \emph{strictly stable}. Finally, $\si$ is called \emph{unstable} if it is not stable.

The Nariai, cold and ultra cold systems are standard cylinders of the form $(\re\times \sw,ds^{2}+\rho^2 g_{\sw}),$ where $\rho$ is a constant. Its horizons are slices $\{s\}\times\mathbb{S}^2$, so are homologically nontrivial. The cold and ultracold models have only one horizon, which is area minimizing, and therefore stable. On the other hand, the extended charged Nariai model has multiple stable horizons, each area minimizing. Moreover in these three cases, the horizons are degenerate.

Consider the Reissner-Nordstr\"om-de Sitter manifold, since the Gauss equation implies 
 $$\Ric_g(N,N)=\frac{R_g}{2}-K_{\Sigma}-\frac{|A|^2}{2},$$
 the Jacobi operator  with respect to $\si_{a}=\{r=a\}$ 
becomes 
\begin{eqnarray*}
L_{\si_{a}}= \Delta_{\si_{a}}+\Lambda+\frac{Q^2}{a^4}-\frac{1}{a^2}.
\end{eqnarray*}  
Suppose $0\leq Q^2< \frac{1}{4\Lambda}.$ If either
\begin{equation*}\label{inst1}
a^2>\frac{1+\sqrt{1-4\Lambda Q^2}}{2\Lambda}
\end{equation*}
or
\begin{equation*}\label{inst2}
0<a^2<\frac{1-\sqrt{1-4\Lambda Q^2}}{2\Lambda},
\end{equation*}
then $\si_{a}$ is unstable. Indeed, letting $f=1$  in \eqref{index}, we have that  $B(1)<0.$ On the other hand, if  
  
\begin{equation*}\label{stab}
    \frac{1-\sqrt{1-4\Lambda Q^2}}{2\Lambda}<a^2< \frac{1+\sqrt{1-4\Lambda Q^2}}{2\Lambda},
\end{equation*}
then $\si$ is strictly stable. 

Recall that  $r_-,r_+$ and $r_c$ are positive and distinct roots of \eqref{lapse}, with 
\begin{equation}\label{roots}
    r_- < \rho_{**} < r_+ < \rho_* < r_c,
\end{equation}
where $\rho_*$ and $\rho_{**}$ are critical roots, corresponding to the cases of double roots of the potential $V$, $r_-=\rho_{**}=r_+$ and $r_+=\rho_*=r_c$. 
By equation \eqref{masscharge} we have   
$$
 \rho_{**}^2=\frac{1-\sqrt{1-4\Lambda Q^2}}{2\Lambda}\quad\mbox{and}\quad  \rho_*^2=\frac{1+\sqrt{1-4\Lambda Q^2}}{2\Lambda}.
$$

By \eqref{roots},  we can conclude that 
\begin{eqnarray*}
r_c^2>\frac{1+\sqrt{1-4\Lambda Q^2}}{2\Lambda},
\end{eqnarray*}

\begin{eqnarray*}
\frac{1-\sqrt{1-4\Lambda Q^2}}{2\Lambda}<r_+^2< \frac{1+\sqrt{1-4\Lambda Q^2}}{2\Lambda}.
\end{eqnarray*}
Hence $\{r=r_c\}$ is unstable and $\{r=r_+\}$ is strictly stable.

Finally, in the de-Sitter model the Riemannian manifold is the round $3$-sphere and $V^{-1}(0)$ is an equator. In this case, it is well-known that the totally geodesic spheres have index one.

\section{Topology of electrostatic systems}\label{top}

\subsection{An area-decreasing flow}
We start by introducing the following definition, which concerns a certain type of surface which appear in the proof of Theorem \ref{char1}. 

\begin{definition}\label{def:alm.prop.emb}
Let $(M^3,g)$ be a Riemannian $3$-manifold and let $\Sigma$ be an immersed surface in $(M^3,g)$. We say that $\si$ is almost properly embedded if there exists subsets $\{p_1,\cdots,p_m\}, \{q_1,\cdots,q_n\}$ of $\partial\si$ such that:
\begin{enumerate}
\item $\{p_1,\cdots,p_m\}\cap\{q_1,\cdots,q_n\} = \emptyset$,
\item $\si\setminus\{p_1,\cdots,p_m\}$ is embedded,
\item $\si\setminus\{q_1,\cdots,q_n\}$ is smooth,
\item $\si\cap\partial M = \partial\si$.
\end{enumerate}
\end{definition}

Let $(M^3,g,V,E)$ be a compact electrostatic system such that $V^{-1}(0) = \partial M$. Consider $\Sigma_0^2$ a compact almost properly embedded orientable minimal surface in $(M^3,g)$ and choose $N_0$ a smooth unit normal vector field on $\Sigma_0$. For some $\delta>0$ sufficiently small consider the following smooth flow $\Phi:[0,\delta)\times \Sigma_0 \rightarrow M$ of $\Sigma_0$  satisfying
\begin{align}\label{flow}
\frac{d}{dt}\Phi_{t}(p) &= V(\Phi_t(p))N_{t}(p),   \quad p\in \Sigma_0, t\in[0,\delta),\\
\Phi(0,p) &= \phi,   \quad p\in \Sigma_0,\nonumber
\end{align}
where $N_t(p):= N(p,t)$ is a unit normal at $\Phi_t(p).$
Since $V$ is a  smooth function on $M$ with $V^{-1}(0)=\partial M$ and $dV\neq 0$ on $\partial M$ (by item ii) of Lemma \ref{lemma:propert}), the metric $g$ on $M\setminus\partial M$ is  conformally compact with defining function $V$, that is,  $\overline{g}=V^{-2}g$  extends  as a smooth metric on $M.$  We also remark that  $(M\setminus\partial M,V^{-2}g)$ has bounded geometry (see \cite{AmmLauNis}), i.e., the Riemann curvature tensor and all of its covariant derivatives are uniformly bounded (The bound depending on the order of the derivative) and the injectivity radius is bounded below. 

A physical motivation to consider $(M,V^{-2}g)$ is the following. Let $\gamma$ be the trajectory of a light ray in a standard static spacetime $(\mathbb{R}\times M,-V^2dt^2 + g)$. Then its projection on $M$ is a geodesic in the metric $V^{-2}g$, \cite[Chapter 8]{Fran}.

One can check that  \eqref{flow} is the flow of $\Sigma$ in $(M\setminus\partial M,V^{-2}g)$ by equidistant surfaces, which is smooth if $\delta$ is less than the injectivity radius of $(M\setminus\partial M,V^{-2}g)$.
Since $V$ vanishes on $\partial M$, we observe that each $\Phi_{t}(\Sigma_0):=\Sigma_t$, $t\in[0,\delta)$, is a compact almost properly embedded surface in $(M,g)$ with the same boundary as $\Sigma_0.$ 

In the next proposition we prove that if we start at a minimal surface, the flow given by (\ref{flow}) does not increase the area. If furthermore we assume that the area is contast along the flow, a splitting result is obtained (compare with \cite[Proposition 14]{A} and \cite[Lemma 4]{G}).

\begin{prop}\label{goodlemma}
 Let $(M^3,g,V,E)$ be a compact electrostatic system, such that $V^{-1}(0) = \partial M$. Suppose $\Sigma_0$ is a connected and compact almost properly embedded orientable minimal surface in $(M,g)$ and $\Phi_t$, $t\in[0,\delta)$, is the flow starting at $\Sigma_0$ defined by (\ref{flow}). Then the function $
 \mathfrak{a}(t)=|\Sigma_t|,$ $t\in[0,\delta)$, is monotone non-increasing.

Assuming that $\mathfrak{a}$ is constant we get that $|E|$ is constant on $\Phi\bigl([0,\delta)\times \Sigma_0\bigr)$ and $V_t := V_{\si_t}$ is constant for each $t \in [0,\delta)$. Moreover:
 \renewcommand{\labelenumi}{\roman{enumi})}
 \begin{enumerate}
 \item If $\Sigma_0$ is closed, then
 \begin{equation*}
  \Phi : \bigl([0,\delta)\times \Sigma_0,(V\circ\Phi)^2dt^2+g_{\Sigma_0}\bigr) \rightarrow (M,g)
 \end{equation*}
 \noindent is an isometry onto its image $U\subset M$, so that $(U, g|_U)$ is isometric to 
\begin{align*}
&\left([0, s^{\ast})\times \Sigma_0, ds^2 + g_{\Sigma_0} \right),\ \textrm{if}\ \big|E| = 0,\\
&\left( [0, s^{\ast}) \times \Sigma_0, \frac{1}{|E|}\,ds^2 + g_{\Sigma_0} \right),\ \textrm{if}\ |E|\ \textrm{is positive},
\end{align*} 
where $(\Sigma_0, g_{\Sigma_0})$ has constant Gaussian curvature $K_{\si_0} = c V^{-3}_{0} + 3|E|^2 + \frac{\Lambda}{3}$, for some $c \in \mathbb{R}$.
\vspace{0.2cm}

 \item If $\partial \Sigma_0$ is not empty, then $\Lambda > 0$, $E|_U \equiv 0$ and
 \begin{equation*}
  \Phi : \bigl([0,\delta)\times (\Sigma_0\setminus \partial\Sigma_0),(V\circ\Phi)^2dt^2+g_{\Sigma_0}\bigr) \rightarrow (M,g)
 \end{equation*}
 \noindent is an isometry onto its image $U\subset M\setminus \partial M$  and $V|_{0}$ is a static potential, where $(\Sigma_0,g_{\Sigma_0})$ is isometric to a domain bounded by geodesics in the sphere $\sw$ of constant Gaussian curvature $K=\frac{\Lambda}{3}$. So, $g|_U$ is isometric to the constant sectional curvature metric $V^2_{0}d\theta^2+g_{\Sigma_0}$.
 \end{enumerate}
\end{prop}

\begin{proof}
Since the equations \eqref{sev1}, \eqref{sev2} and \eqref{sev3} still hold if we replace $V$ by $-V$, without loss of generality we can suppose $V > 0$ on $\interior M$.

Let $\{\Sigma_t\}_{t\geq 0}$ be an outward normal flow of surfaces  with initial condition $\Sigma_0$ and speed $V.$
Let $\vec{H}_t = - H_t N_t,$ where $H_t$ denotes the mean curvature  of $\Sigma_t$. 
Recall the following well known evolution of $H_t$ for deformations of surfaces (see for instance \cite{HP}),
$$\frac{\partial}{\partial t}H_{t} = -L_{\Sigma_t} V,$$
where 
$L_{\Sigma_t}$ is the Jacobi operator of $\si_t$ defined as in \eqref{jacobi}.
Recall that the Laplacians $\Delta_g$ on $M$ and $\Delta_{\Sigma_t}$ on $\Sigma_t$ are related by
$$\Delta_g V = \Delta_{\Sigma_t} V + g(\nabla V, \vec{H}_t) + \hess_g V(N_t,N_t),$$
which together with  \eqref{eq:overdeterm} imply that
$$L_{\Sigma_t} V = g(\nabla V, N_t)H_t + |A_{\si_t}|^2V + 2V \left( |E|^2 - \langle E,N_t\rangle^2 \right).$$
Using Cauchy-Schwarz inequality, we have
\begin{equation*}
 \frac{\partial}{\partial t}H_{t} \leq - g(\nabla V, N_t)H_t.
\end{equation*}
Since $V\geq 0$ and $H_0 = 0$, it follows by Gr\"onwall's inequality that each $\Sigma_t$ must have $H_{t} \leq 0$ for every $t\in[0,\delta)$.
By the first variation of area formula (where $\partial \Sigma_t=\partial \Sigma_0$) $\Sigma_t$ must have area less than or equal to that of $\Sigma_0$.

In the remaining of the proof, assume that the function $\mathfrak{a}$ is constant on $[0,\delta)$. We see by Cauchy-Schwarz inequality that $E$ and $N$ are linearly dependent, so there exists a function $f:U\to\mathbb{R}$ such that $E=fN$. Also, we conclude that $\Sigma_t$ is totally geodesic and, hence, each $(\Sigma_t,g_{\Sigma_{t}})$ is isometric to $(\Sigma_0,g_{\Sigma_0})$. 
 
We may assume that in a neighborhood of $M,$ say $U\approx (-\varepsilon,\varepsilon)\times \Sigma\setminus \partial\Sigma \subset M\setminus \partial M,$ the metric can be written as
\begin{equation}\label{metric}
g = (V\circ\Phi)^2(t,x)\,dt^2 + g_{\Sigma_0}.
\end{equation}

Since $\diver_{g}E=0$ and $\diver_{g}N = H_t = 0$, we have $\frac{\partial f}{\partial t}=0$, which implies that $f$ does not depend of $t$.  
Also, since $\curl (VE)=0$ and $N=V\nabla t$, we have $\nabla(fV^2)\times \nabla t = 0$ and so $fV^2$ is independent of $x$.
Hence, there exists a smooth function $\alpha:(-\varepsilon,\varepsilon)\to\mathbb{R}$ such that 
\begin{equation}\label{eq:trick_galloway}
    f(x)V^2(t,x)=\alpha(t),
\end{equation}
for all $(t,x)\in U$. We can suppose $f\geq 0$, so $f = |E|$ and $|E|V^2$ is constant on each $\Sigma_t$.

Under the variation \eqref{flow}, the second fundamental form evolves according to the equation (see \cite{HP})
\begin{equation*}
  \frac{\partial}{\partial t}A_{ij} =-(\hess_{\Sigma} V_t)_{ij} +V_t(- R_{NiNj} + A_{ik}A_j^k),
 \end{equation*} 
where $V_t = V|_{\si_t}$. Since each \( \Sigma_t \) is totally geodesic, we get
\begin{equation*}
 \hess_{\Sigma_t} V_t (W,Z) = - \langle R(W,N_t)N_t,Z\rangle_g V_t, \quad \forall \, W,Z \in \mathfrak{X}(\Sigma_t).
\end{equation*}
Given an orthonormal basis $\{X,Y,N_t\}$ in $\Sigma_t$, we also get
\begin{align*}
    \Ric_g(X,X) = K_t + \langle R(N_t,X,X),N_t \rangle_g,
\end{align*}
where $K_t$ is the Gaussian curvature of $\si_t$. Combining the last two equations with \eqref{eq1}, we obtain
\begin{align*}
  \hess V_t(X,X) &= V_t \left(\Ric_g(X,X)-\Lambda-|E|^2 \right) \\
  &= V_t \left( K_{t} -\Lambda-|E|^2 \right) +\langle R(X,N_t)N_t,X\rangle_g V_t\\
  &= V_t \left( K_{t} -\Lambda-|E|^2 \right) - \hess_{\Sigma_t} V_t(X,X),
\end{align*}
Since each $\Sigma_t$ is totally geodesic, it holds $\hess V_t=\hess_{\Sigma_t} V_t$. Thus we obtain
\begin{align}
  \hess_{\Sigma_t} V_t & = \frac{1}{2}(K_t - \Lambda-|E|^2)V_t g_{\Sigma_t}, \label{hessi} \\
 \Delta_{\Sigma_t}V_t & = (K_t - \Lambda-|E|^2)V_t. \label{eq:laplacian}
\end{align}
Therefore, \( (\Sigma_t, g_{\Sigma_t}) \) is a Riemannian manifold that admits a positive global solution for the equation
\begin{align}
  \label{eq:conformal_hessian}
  \hess_{\Sigma_t} V_t = \frac{\Delta_{\Sigma_t} V_t}{2} g_{\Sigma_t}.
\end{align}

Henceforward, we will omit the subscript $t$.

Recall that $\diver \hess_{\Sigma} V=d(\Delta_{\Sigma}V)+KdV.$ Thus, using \eqref{eq:conformal_hessian} and \eqref{eq:laplacian} we obtain,
\begin{align*}
  \frac{1}{2} d\left( \Delta_{\Sigma} V \right) & = d\left( \Delta_{\Sigma} V \right) + K dV \\
  \Rightarrow 0 = 2K dV + d\left( \Delta_{\Sigma} V \right) & = 3K dV + V dK - d\left[ \left( |E|^2 + \Lambda \right) V \right]
  \end{align*}
So, multiplying both sides by $V^2$ we get,
\begin{align*}
  d\left( K V^3 \right) & = V^2 d\left[ \left( |E|^2 + \Lambda \right) V \right] = V^3 d\left( |E|^2 + \Lambda \right) + \left( |E|^2 + \Lambda \right) \frac{d(V^3)}{3} \\
                        & = d\left[ \left( \frac{|E|^2 + \Lambda}{3} \right) V^3 \right] + \frac{2}{3} V^3 d\left( |E|^2 + \Lambda \right).
\end{align*}
On the other hand, using \eqref{eq:trick_galloway} it follows that,
\begin{align*}
  V^3 d\left( |E|^2 + \Lambda \right) & = V^3 d\left( \alpha^2 V^{-4} \right) = - 4 \alpha^2 V^{-2} dV \\
                                      & = 4 d\left( \alpha^2 V^{-1} \right) = 4 d\left( |E|^2 V^3 \right).
\end{align*}
Therefore, combining theses results we obtain the following integrability condition:
\begin{equation*}
d\left[V^3\left(K - \frac{\Lambda}{3} - 3|E|^2\right)\right]=0.
\end{equation*}
Thus, there exists a constant $c$ such that 
\begin{equation}\label{eq:gauss_curvature}
K = c V^{-3} + 3|E|^2 + \frac{\Lambda}{3}.
\end{equation}

Suppose $\partial\si \neq \emptyset$. Since $V_{\partial\si} \equiv 0$, by \eqref{eq:trick_galloway} and \eqref{eq:gauss_curvature} we conclude that $\alpha = 0$ and $c = 0$, respectively. So $|E|V^2 = 0$ and $V^3\left(K - \frac{\Lambda}{3} - 3|E|^2\right) = 0$. Since $V \neq 0$ on $\si\setminus\partial\si$ we obtain $|E| \equiv 0$ and $K = \frac{\Lambda}{3}$. So, by \eqref{eq:laplacian}
\begin{equation}\label{eq:lapl.pot}
\Delta_{\Sigma}V + \frac{2\Lambda}{3}V = 0.
\end{equation}
Multiplying \eqref{eq:lapl.pot} by $V$, integrating by parts on $\si$ and using that $V_{\partial\si} \equiv 0$, one concludes that $\Lambda > 0$. Therefore $V \neq 0$ satisfies the static equations, so as in item ii) of Lemma \ref{lemma:propert} one can prove that $\partial\si = V^{-1}(0)$ is a (piece-wise) geodesic. Since $\si$ has constant positive curvature we conclude that $\si$ is isometric to a domain bounded by geodesics in the round sphere $\sw$ of constant curvature $\frac{\Lambda}{3}$.

If $\si$ is closed and $E \equiv 0$, we can proceed as in Appendix B of \cite{A}. However, we will fix a gap in \cite{A} (which we explain below) in the case $\Lambda > 0$. This correction was suggested to us by L. Ambrozio in a private communication, for which he has our cordial thanks. We want to prove that $V$ is constant on $\si$. Assume that $V$ is not constant. As proved in \cite{KuhnelConformalTransformationsEinstein1988}, $V$ has precisely two non-degenerated critical 
points, denoted by $p_1$ and $p_2$ (which are respectively, the point of minimum  and  maximum), $V$ is increasing and only depends on the distance to $p_1$. Moreover, $\si$ is a topological sphere, and the metric on $\Sigma\setminus \{p_1, p_2\}$ can be written as 
$$ g=d u^{2}+\left(\frac{V^{\prime}(u)}{V^{\prime \prime}(0)}\right)^{2} d \theta^{2},$$
 where $u \in\left(0, u_{0}\right)$ is the distance to $p_{1}$  and $\theta$ is a $2\pi$-periodic variable. Using that $\hess V=V''g,$  equation \eqref{hessi} may be expressed as
\begin{align}\label{newhessian}
2V''= (K-\Lambda)V = cV^{-2} -\frac{2}{3}\Lambda V.
\end{align}
Since 
\begin{align*}
\left[\frac{(V')^2}{2}+\frac{c}{2V}+\frac{\Lambda}{6}V^2\right]' = V'\left[V''-\frac{c}{2V^2}+\frac{\Lambda}{3} V\right]=0,
\end{align*}
there exists a real constant $d$ such that 
\begin{equation*}
(V')^2+\frac{c}{V} +\frac{\Lambda}{3}V^2=d, 
\end{equation*}
on $(0,u_0).$ That is equivalent to the following equation:
\[
\frac{\Lambda}{3}V^3 - dV + V(V')^2 + c=0.
\]
Using that $V'(0)=V'(u_0)=0,$ we see that $A=V(0)$ and $B=V(u_0)$ are roots of the polynomial $P(x)=(\Lambda/3)x^3-dx+c.$ The other root will be denoted by $C$.

Moreover, applying the Gauss-Bonnet theorem we have
$$4\pi = \int_{\Sigma} K d\sigma=\frac{2 \pi}{V^{\prime \prime}(0)}\left[-\frac{c}{2V^2}+\frac{\Lambda}{3} V\right]_{0}^{u_0}.$$
In \cite{A} it was claimed that the last equation together with \eqref{newhessian} implies $c= 2\frac{\Lambda}{3}V^3(u_0)$ (in \cite{A}, $\Lambda = 3$). However the correct conclusion is
$$
2V''(0)=-\frac{c}{2V^2(u_0)}+\frac{\Lambda}{3} V(u_0)+\frac{c}{2V^2(0)}-\frac{\Lambda}{3} V(0).
$$
By \eqref{newhessian}, we conclude that
\begin{equation}\label{GBF}
2(V(u_0)+V(0))=\frac{3c}{\Lambda}\left(\frac{1}{V(0)^2}+\frac{1}{V(u_0)^2}\right).
\end{equation}

However, by Vieta's formulas we have $A+B+C = 0$ and $\frac{3c}{\Lambda} = -ABC = (A + B)AB$. Thus we can rewrite \eqref{GBF} as
\begin{eqnarray*}
2(A + B) = (A + B)AB\left(\frac{1}{A^2}+\frac{1}{B^2}\right).
\end{eqnarray*}
Thus, $2AB=A^2+B^2$, and hence $A = B.$ But this a contradiction with the fact that $V$ is increasing.

Now, assume $\si$ is closed and $E$ is not identically zero. It follows from \eqref{eq:trick_galloway} that \( |E| > 0 \) everywhere on \( U \). Since  \( 0 = \frac{\partial }{\partial_t} H_t = \Delta_{\Sigma}V + \Ric_g(N,N) V \), we have
\begin{align*}
  \Ric_g(N,N) V = -\Delta_{\Sigma}V.
\end{align*}
On the other hand, \( \Ric_g(N,N)V = \hess_{g} V(N,N) + \Lambda - |E|^2 \). Thus, by \eqref{eq:trick_galloway},
\begin{align*}
\hess_g V(N, N) = -\Delta_{\Sigma}V + \alpha^2 V^{-4} - \Lambda.
\end{align*}

Using that \( N = V^{-1} \partial_t \), we obtain \( \nabla_{N} N = - V^{-1} \nabla_{\Sigma}V \) and
\begin{align*}\label{K2}
  \hess_g V(N, N) = N(N(V)) - \nabla_{N} N(V)  = V^{-1} \left( \frac{d^2}{dt^2} \big(\log \sqrt{\alpha}\big) + |\nabla_{\Sigma}V|^2 \right).
\end{align*}
So, we get
\begin{align*}
  \frac{d^2}{dt^2}  \big(\log \sqrt{\alpha}\big) + |\nabla_{\Sigma}V|^2
    = - V \Delta_{\Sigma}V + \alpha^2 V^{-3} - \Lambda V.
\end{align*}
Then, for any \( X \in \mathfrak{X}(\Sigma) \), we have
\begin{equation}
  X\left( |\nabla_{\Sigma}V|^2 \right) + V X\left( \Delta_{\Sigma}V \right)
  + \left( \Delta_{\Sigma}V + 3 \alpha^2 V^{-4} + \Lambda \right) X(V) = 0. \label{eq:differentiation}
\end{equation}
Using \eqref{eq:conformal_hessian}, it follows that
\begin{align*}
  X\left( |\nabla_{\Sigma}V|^2 \right) = (\Delta_{\Sigma}V) X(V).
\end{align*}
Combining equations \eqref{eq:trick_galloway}, \eqref{eq:laplacian} and \eqref{eq:gauss_curvature},
we obtain
\begin{align*}
  \Delta_{\Sigma}V = c V^{-2} + 2\alpha^2 V^{-3} - \frac{2 \Lambda}{3} V.
\end{align*}
Substituting in the equation \eqref{eq:differentiation} and multiplying by \( - V^4 \),
we get
\begin{align*}
  \left(2\Lambda V^5 - \Lambda V^4 + 2\alpha^2 V - 3 \alpha^2 \right) X(V) = 0.
\end{align*}

The polynomial $\psi(y) = 2\Lambda y^5 - \Lambda y^4 + 2 \alpha^2 y^2 - 3 \alpha^2$ has at most five zeros, which are then isolated. 
Thus, either $X(V) = 0$ in $\si$, $\forall\, X\in \mathfrak{X}(\Sigma)$, or there exist $X \in \mathfrak{X}(\Sigma)$ and $p \in \si$ such that $X(V)|_p \neq 0$, so $\psi(V) = 0$ in some connected neighborhood $\mathcal{U}$ of $p$, which implies that $V|_{\mathcal{U}}$ is constant. Then $V$ is locally constant, and since $\si$ is connected, $V$ must be constant along $\si$. By \eqref{eq:trick_galloway}, it follows that $|E|$ is constant on $\si$. Moreover, $|E(t,\cdot)| = f$ does not depend on $t$, so $|E|$ is constant on $\Phi\big([0,\delta)\times \Sigma_0\big)$. 

Since $|E|$ is a positive constant, it follows from \eqref{eq:trick_galloway} that $\alpha(t) > 0$. The reparametrization $s=\int_0^t \sqrt{\alpha(r)}\,dr$ allow us to conclude that $(U, g\vert_U)$ is isometric to the Riemannian product
$$\left( [0, s^{\ast}) \times \Sigma_0, \frac{1}{|E|}\,ds^2 + g_{\Sigma_0} \right).$$
\end{proof}

\subsection{Topological consequences}
The proof of the following result goes along the same lines as in \cite[Proposition $15$]{A}. However, we write the proof here for completeness.

\begin{prop}\label{injective}
 Let $(M^3,g,V,E)$ be a compact electrostatic system such that $V^{-1}(0) = \partial M \neq \emptyset$. Then, the homomorphism $i_*:\pi_1(\partial M)\to\pi_1(M)$, induced by the inclusion $i:\partial M \to M$, is injective.
\end{prop}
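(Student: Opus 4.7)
The plan is to argue by contradiction, adapting Ambrozio's strategy from the static case \cite[Proposition 15]{A}. Suppose $i_*$ is not injective: there is a simple closed curve $\gamma \subset \partial M$ that is null-homotopic in $M$ but not in $\partial M$. By Papakyriakopoulos's Loop Theorem, one may assume that $\gamma$ bounds an embedded disk $D\subset M$ whose boundary $\partial D = \gamma$ represents a nontrivial class in $\pi_1(\partial M)$.

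Next, I would minimize area in the class of almost properly embedded disks in $M$ whose boundary is freely homotopic to $\gamma$ in $\partial M$. Invoking the Plateau-type theorem proved in Appendix \ref{appendix}, this produces an almost properly embedded minimal disk $\Sigma_0 \subset M$ with $\partial \Sigma_0 \subset \partial M$ freely homotopic to $\gamma$ and of least area in its class; since $\partial M$ is totally geodesic (Lemma \ref{lemma:propert}), $\Sigma_0$ meets $\partial M$ orthogonally along $\partial \Sigma_0$. Applying Proposition \ref{goodlemma} to this $\Sigma_0$, the flow $\Phi_t$ fixes $\partial \Sigma_0$ pointwise because $V \equiv 0$ on $\partial M$, so each $\Sigma_t$ is a disk whose boundary is still freely homotopic to $\gamma$. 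Proposition \ref{goodlemma} gives $|\Sigma_t| \leq |\Sigma_0|$; area-minimality forces the opposite inequality, so $\mathfrak{a}(t) \equiv |\Sigma_0|$ is constant.

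Item ii) of Proposition \ref{goodlemma} then yields $\Lambda > 0$, $E \equiv 0$ on the image $U$ of the flow, and $(\Sigma_0, g_{\Sigma_0})$ is isometric to a domain in $(\sw, g_{\mathrm{round}})$ of constant Gaussian curvature $\Lambda/3$ bounded by geodesics. Since $\Sigma_0$ is a topological disk with smooth boundary, the only such domain is a hemisphere, and $\partial \Sigma_0$ is isometric to a great circle. Moreover, the warped-product description $g|_U = V^2 d\theta^2 + g_{\Sigma_0}$ provided by Proposition \ref{goodlemma} identifies $U$ with half of the round sphere $\st$ of constant sectional curvature $\Lambda/3$. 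Because the overdetermined elliptic system \eqref{eq1}--\eqref{eq:Faraday-law} (see Lemma \ref{lemma:propert} iii)) is real analytic in harmonic coordinates, unique continuation propagates this isometry to the connected component of $M$ containing $\Sigma_0$; that component must therefore be a compact region of $\st$ of curvature $\Lambda/3$ bounded by pieces of totally geodesic great $2$-spheres. Consequently, the component of $\partial M$ containing $\partial \Sigma_0$ is a round $2$-sphere, hence simply connected, and $\partial \Sigma_0 = \gamma$ is null-homotopic in $\partial M$, contradicting its choice.

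The main obstacle is the last step: producing the contradiction requires globalizing the local warped-product isometry supplied by Proposition \ref{goodlemma}, which rests on analyticity and unique continuation for the electrostatic system together with enough topological control of the ambient. Constructing the area-minimizing disk with the required regularity up to the totally geodesic free boundary via the Plateau problem of Appendix \ref{appendix} is the other delicate ingredient.
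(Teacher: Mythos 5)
Your overall skeleton (area minimization, the flow of Proposition \ref{goodlemma}, constancy of area, rigidity, conclude the relevant boundary component is a sphere) matches the paper's, but two steps as you state them have genuine gaps. First, your minimization problem is a \emph{free-boundary} one: disks whose boundary is only freely homotopic to $\gamma$ in $\partial M$, with the minimizer claimed to meet $\partial M$ orthogonally. Appendix \ref{appendix} does not cover this: the Plateau-type problem there is a fixed-boundary problem (currents $T$ with $\spt(\partial T)=\gamma$), and no existence or free-boundary regularity theory for your class is provided in the paper, nor does orthogonality follow from $\partial M$ being totally geodesic — it is the natural boundary condition of the free-boundary problem and needs its own justification. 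The paper avoids all of this by fixing the boundary: since $\partial M$ is totally geodesic (hence mean convex), Meeks--Yau gives an immersed least-area disk with boundary exactly $\gamma$, which is either contained in $\partial M$ (whence $[\gamma]=0$ in $\pi_1(\partial M)$ immediately) or properly embedded; note also that your argument does not treat the possibility that the minimizer lies in $\partial M$.

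Second, your globalization step is not justified. Proposition \ref{goodlemma} only gives the warped-product structure on the short-time image $U\approx[0,\delta)\times\Sigma_0$ of the flow; to pass from this collar to a statement about the whole component of $M$ you appeal to real-analyticity of the electrostatic system in harmonic coordinates and unique continuation of isometries, neither of which is established in the paper (and $E\equiv 0$ is only known on $U$, which is part of what would have to be propagated). The paper instead proves the flow extends for all time — the surfaces cannot reach $\partial M$ in finite time because $(M\setminus\partial M,V^{-2}g)$ is complete, and at any finite maximal time a compactness argument for the area-minimizing slices lets one continue — and then flows in both normal directions to exhibit $(M\setminus\partial M,g)$ as isometric to the open hemisphere $\st_+\setminus\partial\st_+$ of curvature $\frac{\Lambda}{3}$; hence $M$ is diffeomorphic to $\st_+$, $\partial M\cong\sw$ is simply connected, and $[\gamma]=0$ in $\pi_1(\partial M)$. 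Replacing your analytic-continuation claim by this flow-extension argument (and the free-boundary minimization by the fixed-boundary Meeks--Yau problem) would close the gaps and essentially reproduce the paper's proof.
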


\begin{proof}
Let $[\gamma]\in \pi_{1}(\partial M)$, where $\gamma$ is a smooth embedded closed curve, and assume  $i_{*}[\gamma] = 0$ in $\pi_{1}(M)$. 
Let $\mathcal{F}_M$ denote the set of all immersed disks in $M$ whose boundary is $\gamma.$ We define 
\begin{equation}\label{Plateau}
\mathfrak{A}(M,g)=\inf_{\Sigma\in\mathcal{F}_M}|\Sigma|.
\end{equation}

Since $\partial M$ is mean convex it follows from a classical result of Meeks and Yau  \cite[Theorems 1 and 2]{MeeYau} that there exists an immersed minimal disk $\Sigma_0$ in $M$ such that $|\Sigma_0|= \mathfrak{A}(M,g),$ where $\Sigma_0$ is  either  contained in $\partial M$ or properly embedded in $M$. 

Suppose that $\Sigma_0$ is   properly embedded in $M,$  since otherwise   $\Sigma_0\subset \partial M$ would imply  that $[\gamma]=0$ in $\pi_{1}(\partial M)$.  
Consider the smooth flow $\{\si_t\}_{t\in[0,\delta)}$, defined by \eqref{flow},   starting at $\Sigma_0$ with normal speed $V$ and so that  each $\partial \Sigma_t = \gamma$. According to the Proposition \ref{goodlemma}, we have $|\Sigma_t|\leq |\Sigma_0|.$ On the other hand, the opposite inequality also holds, since $\Sigma_0$ is a solution to the Plateau problem. Thus, $|\Sigma_t| = |\si_0|$, $\forall \ t\in[0,\delta)$, so by item ii) of Proposition \ref{goodlemma}, each $\si_t$ isometric to a hemisphere $\sw_{+}$ with constant Gaussian curvature \( \frac{\Lambda}{3} \).

Let $T>0$  be the maximal time of existence and smoothness of the flow defined by \eqref{flow}. Suppose $T < \infty$. 
First, observe that since $(M\setminus \partial M,V^{-2}g)$ is complete, the surfaces $\Sigma_t\setminus\partial \Sigma_t$ never touch $\partial M$ in finite time. Now, assume that $T<+\infty$, and consider a sequence $t_{i}\rightarrow T$. Each $\Sigma_{t_{i}}$ is a solution of \eqref{Plateau}, so by standard compactness of stable minimal surfaces of bounded area, a subsequence of $\{\Sigma_{t_{i}}\}$ converges to another solution of \eqref{Plateau}. Hence, it would be possible to continue the flow beyond $T$, which is a contradiction.
 
Flowing in the opposite normal direction and using again the Lemma \ref{goodlemma} we conclude that $(M\setminus\partial M,g)$ is isometric to $(\st_{+}\setminus\partial \st_{+},g_{can})$. In particular, $M$ is diffeomorphic to $\st_+$ and so $[\gamma]=0$ in $\pi_{1}(\partial M)$.
\end{proof}

Let us recall some definitions about the topology of $3$-manifolds. 

\begin{definition}
A \textit{compression body} is a $3$-manifold $\Omega$ with boundary with a particular boundary
component $\partial_+\Omega=\Sigma\times\{0\}$ such that $\Omega$ is obtained from $\Sigma\times
[0,1]$ by attaching $2$-handles and $3$-handles, where no attachments are performed along
$\partial_+\Omega=\Sigma\times\{0\}$.

A compression body with only one boundary component, \textit{i.e.} $\partial
\Omega = \partial_+\Omega$, is called a \textit{handlebody}. A handlebody can also be seen as a closed
ball with $1$-handles attached along the boundary.
\end{definition}

We can now state a result that characterize the topology of a certain class of electrostatic systems.

\begin{theo}\label{topology}
Let $(M^3,g,V,E)$ be a compact electrostatic system, such that $V^{-1}(0) = \partial M$ and $\Lambda + |E|^2 > 0$. Then 
\begin{enumerate}
\item If $\partial M$ contains an unstable component, then the number of unstable components of $\partial M$ is equal to one and $M$ is simply connected.
\item Each connected component of $\partial M$ is diffeormorphic to a $2$-sphere.
\end{enumerate}
\end{theo}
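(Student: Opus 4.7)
The plan is to combine three ingredients from earlier in this section: the positivity $R_g = 2\Lambda + 2|E|^2 > 0$ from Lemma \ref{lemma:propert}, the area-monotonicity and rigidity of the flow \eqref{flow} encoded in Proposition \ref{goodlemma}, and the $\pi_1$-injectivity in Proposition \ref{injective}. I would proceed in three steps: (a) every component of $\partial M$ is a $2$-sphere; (b) the unstable component is unique; (c) $M$ is simply connected.

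For (a), since $\partial M$ is totally geodesic the Gauss equation reads $K_\Sigma = \Lambda + |E|^2 - \Ric_g(N,N)$ on each component $\Sigma$. For a \emph{stable} component, testing the stability inequality against $f \equiv 1$ and applying Gauss--Bonnet yields
\[
    2\pi\chi(\Sigma) \;=\; \int_\Sigma K_\Sigma \;\geq\; \Lambda|\Sigma| + \int_\Sigma|E|^2 \;>\; 0,
\]
which (with $\Sigma$ two-sided and orientable) forces $\Sigma \approx S^2$. For an \emph{unstable} $\Sigma_0$, I would perturb it inward using the positive first eigenfunction $f_0$ of its Jacobi operator to obtain a surface of strictly smaller area, then minimize area among surfaces isotopic to this perturbation inside the mean-convex collar bounded by $\Sigma_0$. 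Standard geometric measure theory produces a stable closed embedded minimal surface $\tilde\Sigma \subset \interior M$ with $|\tilde\Sigma| < |\Sigma_0|$, and Fischer--Colbrie--Schoen forces $\tilde\Sigma \approx S^2$. Running \eqref{flow} from $\tilde\Sigma$ toward $\Sigma_0$ in the bounded-geometry conformally compact metric $(M\setminus\partial M, V^{-2}g)$, extracting a smooth limit, and applying the rigidity case of Proposition \ref{goodlemma}(i) shows that the region between $\tilde\Sigma$ and $\Sigma_0$ is a product cylinder $S^2 \times [0,1)$, so $\Sigma_0 \approx S^2$ as well.

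For (b), suppose $\Sigma_0 \neq \Sigma_0'$ were two distinct unstable components; the construction in (a) yields stable minimal spheres $\tilde\Sigma, \tilde\Sigma' \subset \interior M$, each bounding a cylindrical collar with its unstable partner. Minimizing area in the middle region $M'$ between them, among surfaces homologous to $\tilde\Sigma$, either produces a new interior stable minimal sphere (which can be iterated against area lower bounds coming from the monotonicity formula) or, via Proposition \ref{goodlemma}(i) at the minimizer, splits $M'$ as a product $S^2 \times [0,1]$; in the latter case the two collars glue into a single cylinder connecting $\Sigma_0$ to $\Sigma_0'$, contradicting their distinctness. For (c), doubling $M$ across the totally geodesic $\partial M$ produces a closed orientable $3$-manifold $DM$ with $R_{DM} > 0$; by the Perelman / Schoen--Yau classification, $DM$ is a connected sum of spherical space forms and copies of $S^2 \times S^1$. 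Combining this with the van Kampen computation of $\pi_1(DM)$ (which uses $\pi_1(\partial M) = 0$), the cylindrical structure from (a)--(b) near the unique unstable component, and the absence of incompressible higher-genus surfaces in a positive-scalar-curvature $3$-manifold, one rules out nontrivial spherical factors and $S^2 \times S^1$ summands and concludes $\pi_1(M) = 1$.

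The main obstacle I anticipate is the unstable part of (a): since $V \equiv 0$ on $\partial M$, the flow \eqref{flow} does not move from there, so one must first construct the auxiliary interior stable sphere $\tilde\Sigma$ and then transport its topology back to $\Sigma_0$ through the flow. This transfer relies delicately on the rigidity in Proposition \ref{goodlemma}(i) together with the bounded-geometry of the conformally compact model to extract a convergent limit, and the same machinery powers the rigidity step needed in (b).
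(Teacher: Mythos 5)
Your step for the \emph{stable} boundary components (test function $f\equiv 1$ plus Gauss--Bonnet) is fine, but the core of your argument --- the auxiliary interior stable minimal sphere $\tilde\Sigma$ --- cannot be produced, and in fact cannot exist. Minimizing area in the isotopy class of an inward perturbation of the unstable component $\Sigma_0$ (Meeks--Simon--Yau, \cite{MSY}) does not return a smooth interior surface isotopic to that perturbation: the limit is a union of parallel copies of stable minimal surfaces obtained after $\gamma$-reductions (plus possibly a small-area piece), and in the present situation the minimizing sequence degenerates onto the \emph{stable boundary components}, not onto an interior surface. More decisively, the first step of the paper's proof shows that under the hypotheses $\interior M$ contains \emph{no} closed embedded minimal surface whose two-sided cover is stable: given such a surface one minimizes in its homology class, runs the flow \eqref{flow}, and the resulting constancy of area forces the rigidity case of Proposition \ref{goodlemma}; since $(M\setminus\partial M, V^{-2}g)$ is complete the flow never touches $\partial M$ and exists for all time, producing an isometrically embedded infinite (warped) cylinder inside the compact $M$ --- a contradiction. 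So the object on which your step (a) rests does not exist; and even granting it, the flow with speed $V$ cannot ``reach'' $\Sigma_0\subset\{V=0\}$, so the product collar between $\tilde\Sigma$ and $\Sigma_0$ that you use to transfer the sphere topology is not available. The rigidity of Proposition \ref{goodlemma} is a tool for deriving contradictions (nonexistence of stable interior surfaces), not for building collars up to the zero set of $V$. Your step (b) inherits the same flaw, since it is built from the same collars.

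Your step (c) is also insufficient as stated. Doubling across the totally geodesic boundary and invoking the positive-scalar-curvature classification does not rule out spherical space-form summands: a lens-space factor carries no incompressible higher-genus surface, so the criteria you list cannot exclude it, and $\pi_1(M)=1$ does not follow; ruling such factors out is precisely what requires the minimal-surface input. The paper's route is different: after Step 1 (no interior minimal surfaces with stable two-sided cover), it minimizes one unstable boundary component in its \emph{isotopy} class; the $\gamma$-reduced limit consists of parallel copies of the stable boundary components, giving $[\Sigma^{1}]=\sum_i n_i[\partial_i M]$ in $H_2(M,\zz)$, and the long exact sequence of the pair $(M,\partial M)$ then forces the unstable part of $\partial M$ to be connected. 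Next, since $R_g=2\Lambda+2|E|^2>0$ the stable components are spheres \cite{SY3}; capping them with metric balls and showing the infimal area in the isotopy class of $\partial_u M$ in the capped manifold is zero exhibits $M$ as a compression body, so $i_*:\pi_1(\partial M)\to\pi_1(M)$ is \emph{surjective}; combined with the injectivity from Proposition \ref{injective} this forces $M$ to be a closed $3$-ball minus finitely many open balls, which is where both the simple connectedness and the sphericity of the unstable component come from. Your outline cites Proposition \ref{injective} but never supplies the surjectivity/compression-body half of the argument (or any substitute for it), so the conclusion $\pi_1(M)=1$ is not reached.
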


\begin{proof}
First, since $R_g = 2\Lambda + 2|E|^2 > 0$, by \cite{SY3} each stable component of $\partial M$ is homeomorphic to a $2$-sphere. The fact that unstable components are also spheres will follow from item (1), which we prove below.\\

\noindent
{\bf Step 1: $(\interior M,g)$ does not contain embedded closed minimal surfaces whose orientable 2-cover is stable.}\\

First, suppose $\interior M$ contains an orientable embedded closed stable minimal surface $\si$. Write $\partial M = \partial M_s\cup\partial M_u$, where $\partial M_s$ denotes the union of the stable components, and $\partial M_u$ denotes the union of the unstable components. Let $\Omega$ be a component of $M\setminus \si$ such that $\Omega\cap\partial M_u \neq \emptyset$. Minimize area in the $\mathbb{Z}$-homology class of $\si$ inside $\Omega$ (see \cite[Sections 5.1.6 and 5.3.18]{Fed}). We have two possibilities:
\begin{enumerate}
    \item $\Omega$ {\it contains no component of} $\partial M_s$.
   
\noindent    
In this case the surface obtained by minimization is either equal to $\si$ or some component of it is contained in the interior of $\Omega$. In particular the surface is contained in $\interior M$.\\
    
    \item $\Omega$ {\it contains some component of} $\partial M_s$.

\noindent
Since $\partial M_u$ and $\partial M_s$ are homologous, $\si$ can not be homologous to $\partial M_s\cap\partial\Omega$. Thus the surface obtained by minimization has at least one component disjoint from $\partial M_s$.
\end{enumerate} 
In any case, we obtain a surface $S \subset \interior M$ which minimizes area locally.

Suppose $S$ is orientable. Let $N$ be a unit normal vector to $S$. Consider the flow by \eqref{flow}. It follows from Proposition \ref{goodlemma} that the function $t \mapsto |S_t|$ is non-increasing, where $S_t$ denotes the surfaces along the flow. In particular, $|S_t| \leq |S|$. On the other hand, by the minimization property $|S| \leq |S_t|$, for $t \in (0,t_0]$. Then, $|S_t| = |S|$, for $t \in (0,t_0]$. So, the statement $i)$ in Proposition \ref{goodlemma} holds true. 

Let $T^{*} > 0$ be the maximal time in which the flow (\ref{flow}) exists and is smooth. Observe that the surfaces $S_t$ never touch the boundary of $M$ in finite time (since this would imply that $(M\setminus \partial M,V^{-2}g)$ is incomplete). We have two possibilities, either $T^{*}=+\infty$ or $T^{*}$ is finite. Suppose the second case happens and consider a sequence $t_{j}\rightarrow T^{*} $. Then the corresponding surfaces of the flow $S_{t_{j}}$ are area minimizing surfaces, so as in the previous theorem a subsequence converges to another minimizing surface $S_{T^{*}}$. This surface  necessarily is non-orientable, otherwise we could continue the flow beyond $T^{*}$, and this contradicts the definition of $T^{*}$. Moreover, since the surfaces $S_t$, $0 \leq t < T^{*}$, are stable, and $(M,g)$ has positive scalar curvature, by \cite{SY3} each $S_t$ is a $2$-sphere. Hence $S_{T^{*}}$ is a $\mathbb{RP}^2$ and has a tubular neighborhood inside $M$ diffeomorphic to $\mathbb{RP}^3$ minus a ball.

Now, we flow in the direction of the opposite normal. Defining  $T_{*} < 0$ as the maximal time in which the flow exists and is smooth, the argument works as before. In the end, we have obtained an isometric embedding in $(M,g)$ of a manifold $\mathcal{N}$ which is diffeomorphic to $(-\infty,+\infty)\times \mathbb{S}^2$ (if $T_{*} = -\infty$ and $T^{*} = +\infty$), $\mathbb{RP}^3$ minus an open ball (if $T_{*} > - \infty$ and $T^{*} = +\infty$, or $T_{*} = - \infty$ and $T^{*} < +\infty$), or  $\mathbb{RP}^3\#\mathbb{RP}^3$ (if both $T_{*}$ and $T^{*}$ are finite). Moreover the induced metric in $\mathcal{N}$ is complete, so necessarily $\mathcal{N} = M$. However, $M$ is compact with non-empty boundary, while in the first two possibilities $\mathcal{N}$ is non-compact without boundary, and in third one $\mathcal{N}$ is closed, so we obtain a contradiction. Therefore $S$ can not be orientable.

Suppose $S$ is non orientable. Its orientable 2-cover is stable. We can pass to a double covering $\tilde M$ of $M$ such that the lift $\tilde S$ of $S$ is a connected closed orientable minimal surface (see Proposition 3.7 in \cite{Zhou}). Let $\Phi$ be the covering map. Defining $\tilde{g} = \phi^{*}g$, $\tilde{V}(p) = V\bigl(\Phi(p)\bigr)$ and $\tilde{E}(p) = E\bigl(\Phi(p)\bigr)$, we have that $\Phi$ is a local isometry between $(\tilde M,\tilde g)$ and $(M,g)$ and $(\tilde M,\tilde g,\tilde V,\tilde{E})$ also satisfies the equations of an electrostatic system. By the hypothesis, $\tilde{S}$ is stable, so we can find a contradiction as before. Therefore, $\interior M$ does not contain any orientable embedded closed stable minimal surface.

Now, suppose $M$ contains a non orientable embedded closed minimal surface $\si$ whose orientable 2-cover is stable. We can pass to a double cover as in the last paragraph, and proceed as before to obtain a contradiction.\\

\noindent
{\bf Step 2: The number of unstable components of $\partial M$ is equal to one.}\\

We proceed as in Lemma $3.3$ of \cite{LN}. Write $\partial M = \partial_u M\cup\bigl(\cup_{i=1}^{k}\partial_i M\bigr)$, where each $\partial_i M$ is stable, and $\partial_u M$ denotes the union of the unstable components.  Let $\Sigma^{1}, \ldots, \Sigma^{\ell}$ be the connected components of $\partial_u M$.  

We can apply the main result of \cite{MSY} to minimize the area in the isotopy class of $\Sigma^{1}$ in $M$. 
By Section 3 and Theorem 1 of \cite{MSY}, after possibly performing isotopies and finitely many $\gamma$-reductions (a procedure that removes a submanifold homeomorphic to a cylinder and adds two disks in such a way that the cylinder and two disks bound a ball in $M$) one obtains from $\Sigma^{1}$ a surface $\tilde{\Sigma}^{1}$ such that each component of $\tilde{\Sigma}^{1}$ is a parallel surface of a connected minimal surface, except possibly for one component that may be taken to have arbitrarily small area. 

Since $\gamma$-reduction always preserves the homology class and any closed minimal surface whose orientable 2-cover is stable is one of $\partial_i M'$s, there exist positive integers $n_1,\dots, n_k$ such that 
\begin{equation}\label{homologous}
  [\Sigma^{1}] = [\tilde{\Sigma}^{1}]=
  \sum_{i=1}^{k}n_i [\partial_i M] \text{ in }H_2(M,\zz), 
  \end{equation}
where we used that the fact that surfaces of area small enough must be homologically trivial.

Using the long exact sequence for the pair $(M,\partial M)$, we have exactness of 
$$H_3(M,\partial M)\overset{\partial}{\too} H_2(\partial M,\zz)\overset{\iota_*}{\too} H_2(M,\zz).$$
Observe that since $M$ is connected, $\ker i_*$ must be generated by 
\[\partial [M]=\sum_{i=1}^{\ell} [\Sigma^{i} ] -\sum_{i=1}^{k}[\partial_i M].\]
Here we should remark that $\Sigma^{i}$ and $\partial M$ are oriented using the outward normal in $M$.  
Since \eqref{homologous} imply that 
$$[\Sigma^{1}] - \sum_{i=1}^{k}n_i [\partial_i M]\in \ker i_*,$$ 
we conclude that $\partial_u M$ is connected, indeed equal to $\Sigma^{1}$. In particular,
\begin{equation}\label{hom.rel}
[\Sigma^{1}] =\sum_{i=1}^{k}[\partial_i M] \text{ in } H_2(M,\zz).
\end{equation}

\noindent
{\bf Step 3: Denote by $\mathfrak{I}$, the isotopy class of $\partial_u M$. Then, there exist positive integers $m_1,\ldots,m_k$ such that}
$$\inf_{S \in \mathfrak{I}}|S| = \displaystyle\sum_{i = 1}^{k} m_i|\partial_i M|.$$

We will use the same notation of  Step $2$. Fix $j \in \{1,\ldots,k\}$. Since each component of $\tilde{\Sigma}^{1}$ is either isotopic to one of the $\partial_i M$'s (with some orientation) or is null homologous, and since there are no relations among $[\partial_i M]$ in $H_2(M,\zz)$, the equations \eqref{homologous} and \eqref{hom.rel} imply that at least one component of $\tilde{\Sigma}^{1}$ is isotopic to $\partial_j M$. The conclusion follows using the description of the minimization process in \cite{MSY}.\\

\noindent
{\bf Step 4: $M$ is a compression body.}\\

For each $i \in \{1,\ldots,k\}$, denote $h_i = g|_{\partial_i M}$ and consider the Riemannian metric $g_i = dt^2 + (\cos^2 t)h_i$ in $[0,\pi/2)\times\partial_i M$. A calculation shows that the surface $S_i = \{0\}\times\partial_i M$ is totally geodesic and $g_i$ extends to a Riemannian metric on the $3$-ball $B^3$, which we still denote by $g_i$.

Now, consider the Riemannian manifold $(\widehat{M},\widehat{g})$ obtained by gluing $(B^3,g_i)$ to $(M,g)$, for each $i \in \{1,\ldots,k\}$, where each $\partial_i M$ is identified with $S_i$. Since these two surfaces are totally geodesic, the metric $\widehat{g}$ is $C^{1,1}$. Also, $\partial\widehat{M} = \partial M_u$, so the boundary of $\widehat{M}$ is connected. We will prove that $\widehat{M}$ is a handlebody, and since $M$ is obtained by removing open $3$-balls from the interior of $\widehat M$, it follows that $M$ is a compression body.

By Proposition $1$ of \cite{MSY}, if the infimum of the area in the isotopy class of $\partial M_u$ inside $\widehat M$ is zero, then $\widehat M$ is a handlebody. So, we are going to prove that indeed that this infimum is zero. We should remark that as pointed out in  Section $2$ of \cite{MA}, the results of \cite{MSY} still hold if the metric is $C^{1,1}$. 

As previously proved the infimum of the area in the isotopy class of $\partial M_u$ inside $M$ is equal to
$\sum_{i = 1}^{k} m_i|\partial_i M|.$
Thus, by Remark $3.27$ in \cite{MSY} for any sufficiently large positive integer $j$, there is $S_j = \cup_{i = 0}^{k} S^{(i)}_{j}$ obtained from $\partial M_u$ via isotopy and a series of $\gamma$-reductions such that the following holds:
\begin{itemize}
\item the infimum of the area in the isotopy class of $S^{(0)}_{j}$ is equal to zero;
\item $S^{(0)}_{j}\cap\left(\cup_{i = 1}^{\ell} S^{(i)}_{j}\right) = \emptyset$;
\item for $i > 0$ we have
\begin{equation*}
S^{(i)}_{j} = \left\{
\begin{array}{rl}
&\cup_{r=1}^{n_i}\big\{x \in M;\, \mathrm{dist}(x,\partial_i M) = \frac{r}{j}\big\}, \ \textrm{if}\ m_i = 2n_i\\\\
&\partial_i M\cup\Big(\cup_{r=1}^{n_i}\big\{x \in M;\, \mathrm{dist}(x,\partial_i M) = \frac{r}{j}\big\}\Big), \ \textrm{if}\ m_i = 2n_i + 1.
\end{array} \right.
\end{equation*}
\end{itemize}

Each $\partial_i M$ bounds a $3$-ball in $\widehat{M}$, hence each component of $S^{(i)}_{j}$ is isotopic to surfaces of arbitrarily small area, for $i > 0$. We can join the components of $S_{j}$ by tubes of very small area, obtaining thus a surface $\widehat{S}_{j}$ isotopic to $\partial M_u$. Therefore the infimum of the area the isotopy class of $\partial M_u$ in $\widehat{M}$ is equal to zero. So, the conclusion follows.\\

\noindent
{\bf Step 5: $M$ is homeomorphic to a closed $3$-ball minus a finite number of disjoint open $3$-balls.}\\

Since $M$ is a compression body, the homomorphism $i_*:\pi_1(\partial M)\to\pi_1(M)$ induced by the inclusion $i:\partial M \to M$ is surjective. On the other hand, by Proposition \ref{injective}, $i_*$ is also injective. Hence $i_*$ is an isomorphism, and since $M$ is a compression body this is only possible if $M$ is homeomorphic to a closed $3$-ball minus a finite number of disjoint open $3$-balls.
\end{proof}

\section{Min-Max Characterization of unstable horizons}\label{MMC}

\subsection{Min-max constructions}

We begin by describing the heuristic idea behind the min-max theory. Suppose we have a smooth function $f$ defined on some topological space (where the concept of "smoothness" is available), which has two points of strict local minimum $q_1$ and $q_2$. One then expect to find a third critical point of saddle type by a \emph{mountain pass} argument, which we will explain now. Fix a continuous curve $\gamma_0$ joining $q_1$ and $q_2$, and consider the family $\Gamma$ of all continuous curves $\gamma$ which join $q_1$ and $q_2$, and which can be deformed continuously into each other and into $\gamma_0$ as well. In particular $\gamma_0$ belongs to $\Gamma$. Now, define the quantity
$$\lambda = \inf_{\gamma \in \Gamma}\max_{t}f\big(\gamma(t)\big).$$
The next step is to take a sequence $\{p_i = \gamma_i(t_i)\}_{i}$ such that $f(p_i) \to \lambda$, where $\gamma_i \in \Gamma$, and try to obtain a subsequence which converges to a critical point $p$, which must then necessarily satisfy $f(p) = \lambda$.

In our setting, we have a Riemannian 3-manifold, the function is the area functional and the space consists of closed surfaces and degenerated sets with "zero area" (e.g. points and curves) embedded in the manifold.  Critical points of the area correspond to minimal surfaces and points of minimum correspond to surfaces which locally minimize area and degenerated sets as well. Consider a compact domain $\Omega$ between two points of minimum. To employ the idea described above we sweep $\Omega$ out by a smooth one-parameter family of surfaces $\{\Sigma\}_{t \in [0,1]}$. Then, we consider the class $\Pi$ of all families $\{\Psi_t(\Sigma_t)\}_{t \in [0,1]}$, for some smooth one-parameter family of diffeomorphisms $\Psi_t$, all of which isotopic to the identity, and define the quantity  
$$W = \inf_{\Pi}\max_{t \in [0,1]}|\Psi_t(\Sigma_t)|.$$

As before, we would like to take a sequence of slices $\{\Psi_{t_i}(\Sigma_{t_i})\}_i$ such that $|\Psi_{t_i}(\Sigma_{t_i})| \to W$, and the hope is to prove that some subsequence converges (in some sense) to a minimal surface $\Sigma$ whose area is equal to $W$. The notion of convergence requires a topology in our space, and a natural one is the $C^k$-topology, $k \geq 2$. The technical difficulty is that in this topology a control in the area is in general not sufficient to guarantee convergence. A way to overcome this is to use the machinery of \emph{geometric measure theory}, where there are notions of generalized surfaces (e.g. currents and varifolds) and weak convergence, which provide compactness and regularity results. In the following we present the construction necessary to obtain minimal spheres in this setting.

Let $\Omega$ be a connected compact $3$-manifold with boundary, subset of an oriented complete $3$-manifold $(M,g)$. Here $\mathcal{H}^2$ denotes the Hausdorff measure of dimension $2$. Recall that if $\Sigma$ is a smooth surface embedded in $\Omega$, then $\mathcal{H}^2(\Sigma)$ is equal to the area of $\Sigma$. The Almgren map $\mathcal{A}$ (see \cite{Alm1}) associates to a continuous family of surfaces $\{\Sigma_t\}$ a $3$-dimensional integral current $\mathcal{A}(\{\Sigma_t\})$ (see Appendix \ref{appendix} for the definition of currents). The $\mathbf{F}$-norm for varifolds is defined in \cite{P}. For the basic theory of currents and varifolds see \cite{Fed}.

\begin{definition}
Let $\{\Sigma_t\}_{t\in[a,b]}$ be a family of closed subsets of $\Omega$ with finite $\mathcal{H}^2$-measure. We say that $\{\Sigma_t\}$ is a \emph{sweepout by spheres} of $\Omega$ if there are a finite subset $T$ of $[a, b]$ and a finite set of points $P$ in $\Omega$ such that
\begin{enumerate}
\item for all $t\in (a,b)\setminus T$, $\Sigma_t$ is a union of disjoint smooth embedded $2$-spheres in the interior of $\Omega$;
\item for $t \in T$, $\si_t\setminus P$ is a union of smooth embedded $2$-spheres minus points in $\Omega$;
\item $\Sigma_t \to \si_{t_0}$ in the Hausdorff topology whenever $t \to t_0$;
\item $\si_t$ varies smoothly in $[a,b]\setminus T$, and if $t_0 \in T$, then $\si_t$ converges smoothly to $\si_{t_0}$ in $\Omega\setminus P$ as $t\to t_0$;
\item there is a partition $(A,B)$ of the components of $\partial \Omega$ such that, $\Sigma_a=A\cup C_a$, $\Sigma_b=B\cup C_b$, where $\mathcal{H}^2(C_a) = 0 = \mathcal{H}^2(C_b)$. Moreover $\Sigma_t$ converges to $\Sigma_a$ (resp. $\Sigma_b$) in the $\mathbf{F}$-norm, as $t\to a$ (resp. $b$);
\item if $[[\Omega]]$ denotes the $3$-dimensional integral current given by $\Omega$ with its orientation, then we have $\mathcal{A}(\{\Sigma_t\})= [[\Omega]]$.
\end{enumerate}
\end{definition}

For a sweepout $\{\si_t\}_{t\in[a,b]}$, we define the quantity
$${\bf L}\left(\{\si_t\}\right)=\displaystyle\max_{t\in[a,b]}\boH^2(\si_t).$$

\begin{figure}[!h]
\centering
\includegraphics[scale=0.033]{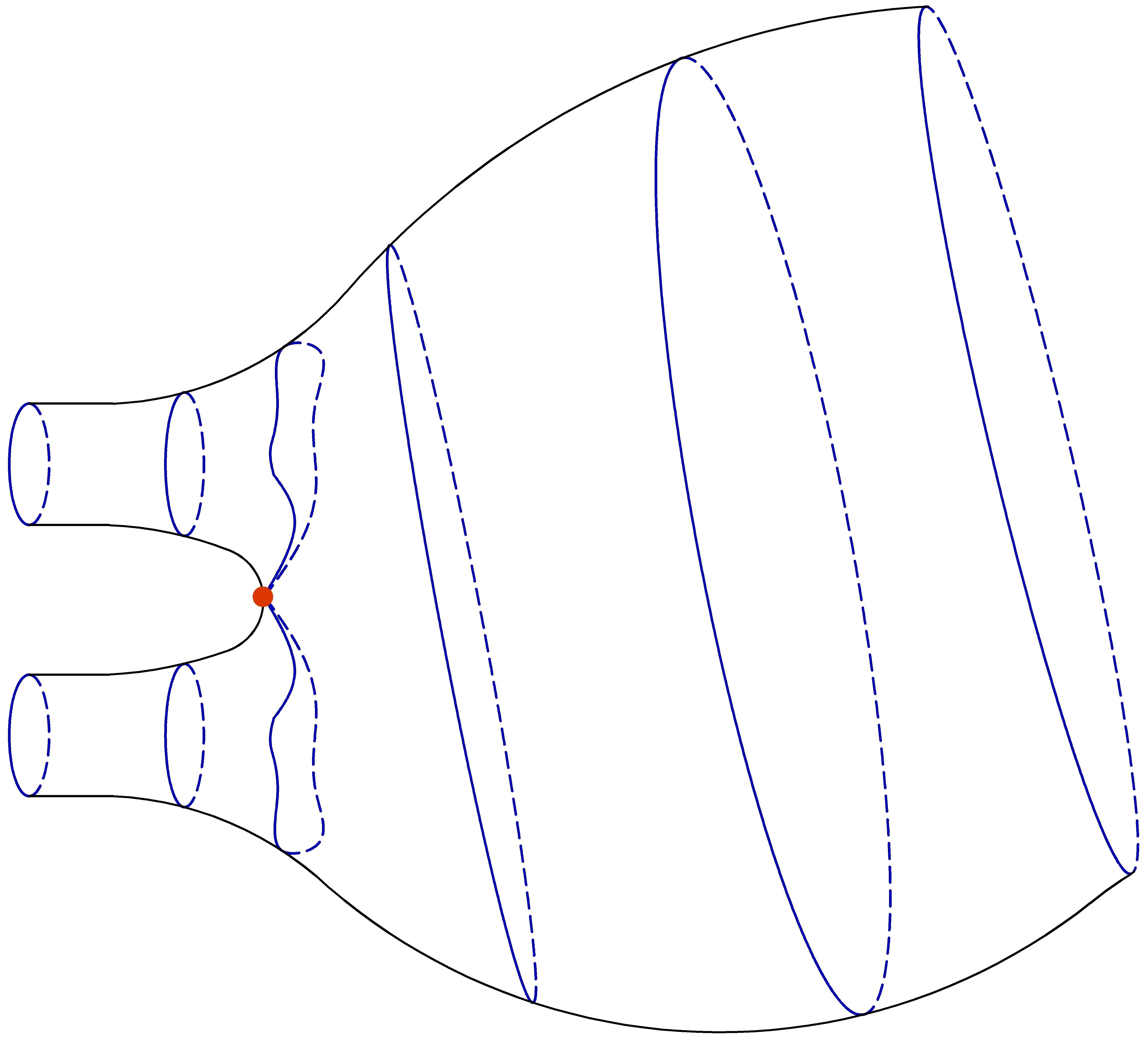}
\caption{A sweepout by spheres. The point in red belongs to the singular set $P$.}
\label{fig:sweepout}
\end{figure}

An example of a sweepout is given by the level sets of $x_4$ in the sphere $\mathbb{S}^3$. In this case, $[a,b] = [-1,1]$, $\Omega$ has no boundary, and $\Sigma_{-1},\Sigma_{1}$ are points. Also, if $\Omega \approx \mathbb{S}^2\times[0,1]$, then $\big\{\Sigma_t = \mathbb{S}^2\times\{t\}\big\}_{t \in [0,1]}$ defines a sweepout. Finally, if $\Omega$ homeomorphic to a closed $3$-ball minus a finite number of disjoint open $3$-balls, then $\Omega$ admits a sweepout by spheres, however, in this case some of the slices can have singularities or be disconnected (see figure \ref{fig:sweepout}).

Let $\Pi$ be a collection of sweepouts. Denote by $\Diff_0$ the set of diffeomorphisms of $\Omega$ isotopic to the identity map and leaving the boundary fixed. The set $\Pi$ is saturated if for any map $\Psi\in C^\infty([0,1]\times \Omega, \Omega)$ such that $\Psi(t,\cdot)\in \Diff_0$ for all $t$, and for any $\{\Sigma_t\}\in \Pi$, we have $\{\Psi(t,\Sigma_t)\}\in \Pi$. We say that $\Pi$ is generated by a sweepout $\{\Sigma_t\}$, if $\Pi$ is the smallest saturated set containing $\{\Sigma_t\}$. The \textit{width of $\Omega$ associated with $\Pi$} is defined to be 
$$W(\Omega,\Pi) =\inf_{\{\Sigma_t\} \in \Pi}{\bf L}\left(\{\si_t\}\right).$$

Given a sequence of sweepouts $\{\{\Sigma^i_t\}\}_i\subset \Pi$, we say this sequence is \textit{minimizing} if 
$$\displaystyle\lim_{i\to \infty} \max_{t\in[0,1]}\mathcal{H}^2(\Sigma^i_t)=W(\Omega,\Pi).$$ 
If that is the case, let $\{t_i\}$ be a sequence of parameters such that 
$$\mathcal{H}^2(\Sigma^i_{t_{i}}) \to W(\Omega,\Pi),$$ 
then we say that $\{\Sigma^i_{t_{i}}\}_{i}$ is a \textit{min-max sequence}.

In this context we have the following variation of Theorem $10$ in \cite{KLS}.

\begin{theo} \label{smoothminmax}
Let $(M^{3},g)$ be an oriented Riemannian $3$-manifold which does not contain embedded projective planes. Let $\Omega$ be a compact non-empty $3$-submanifold of $M$ such that each component of the boundary $\partial\Omega = \Gamma_0\cup\Gamma_1$ is either a strictly mean convex sphere or a strictly stable minimal sphere.

Assume that there exist a saturated set $\Pi$ generated by sweepouts by spheres, and $N=N(\Pi)<\infty$ such that for any $\{\Sigma_t\}\subset \Pi$, the set $P$ consists of at most $N$ points, and the number of components of $\Sigma_t$ is at most $N$, $\forall\, t$. Suppose that 
$$W(\Omega,\Pi) > \max \{\mathcal{H}^2(\Gamma_0), \mathcal{H}^2(\Gamma_1)\}.$$
Then there exists a min-max sequence $\big\{\Sigma^j_{t_j}\big\}_j$ converging to $\displaystyle\sum_{i=1}^k m_i\Sigma^\infty_i$ as varifolds, where each $m_i$ is a positive integer and $\Sigma^\infty_i\subset \Omega$, $i= 1,\ldots,k$, are disjoint embedded minimal spheres such that
\begin{eqnarray*}
&&\sum_{i=1}^k m_i \mathcal{H}^2(\Sigma^\infty_i) = W(\Omega,\Pi).
\end{eqnarray*}
Moreover, at least one of the components $\Sigma^\infty_i$ is contained in the interior of the domain $\Omega$.
\end{theo}

\begin{proof}       
Let $\Upsilon$ be the union of minimal surfaces in $\partial \Omega$. By the hypothesis on $\Omega$, we can find a small $\delta>0$ such that 
$\Omega_\delta := \Omega \cup \{x\in M; d(x,\Upsilon)\leq \delta\}$
is a strictly mean convex domain and if a closed minimal surface is contained in $\Omega_\delta$ then it is contained in $\Omega$. The saturated set $\Pi$ naturally induces a saturated set $\Pi_\delta$ associated with $\Omega_\delta$. It is then not difficult to check that for $\delta$ small, $W(\Omega_\delta,\Pi_\delta)=W(\Omega,\Pi)$. If $\delta$ is chosen small enough, we can apply the version of the Simon-Smith Theorem proved in \cite[Theorem 2.1]{MaNe} to get the existence of the varifold $V=\displaystyle\sum_{i=1}^k m_i\Sigma^\infty_i.$

By the main result in \cite{Ketgenusbound} the genus of each $\Sigma^\infty_i$ is zero, and the topological assumption rules out the possibility of some component be a projective plane. Thus, each $\Sigma^\infty_i$ is an embedded sphere. The fact that at least one component of the min-max surface is inside $\interior(\Omega)$ was proved in \cite{KLS}.
\end{proof}

\

\subsection{Proofs of Theorems B and C} \label{MMCU}

We first prove the following lemma (see also ~\cite[Lemma 4]{HMM}).

\begin{lemma}\label{lemma:stability}
Let $(M^3,g,V,E)$ be an electrostatic system. Let $\Sigma$ be a  closed, connected, orientable stable minimal surface in $M$. Then: 
\begin{enumerate}
\item Either $V$ does not vanish on $\Sigma$ or $V|_{\si}\equiv 0$. 
\item $\Sigma$ is totally geodesic.
\end{enumerate}
\end{lemma}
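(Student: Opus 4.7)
The main identity is that on $\Sigma$, the Jacobi operator acts on the lapse $V$ as
\begin{equation*}
  L_\Sigma V = c V, \qquad c := |A|^2 + 2\bigl(|E|^2 - \langle E, N\rangle^2\bigr) \geq 0,
\end{equation*}
the non-negativity of $c$ being Cauchy--Schwarz. I would derive this by combining the standard decomposition $\Delta_g V|_\Sigma = \Delta_\Sigma V + \hess_g V(N,N)$ (valid here because $H_\Sigma = 0$) with \eqref{eq2} and the $(N,N)$-component of \eqref{eq1}; rearranging gives $\Delta_\Sigma V = -\Ric_g(N,N)\,V + 2V(|E|^2 - \langle E,N\rangle^2)$, and adding the zeroth-order part of $L_\Sigma$ produces the displayed formula.

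Assuming $V \not\equiv 0$ on $\Sigma$, I would then test the stability inequality against $V$ itself. Integration by parts on the closed surface $\Sigma$ gives $B(V) = -\int_\Sigma V\, L_\Sigma V\, d\mu = -\int_\Sigma cV^2\, d\mu$, a quantity that is simultaneously $\geq 0$ by stability and $\leq 0$ because $c \geq 0$. Hence $B(V) = 0$ and $\int_\Sigma cV^2\, d\mu = 0$. The latter forces $cV \equiv 0$ pointwise, so $L_\Sigma V \equiv 0$; the former identifies $V$ as a function that attains the infimum of the Rayleigh quotient of $-L_\Sigma$, so $V$ is a first eigenfunction of this Schr\"odinger-type operator, with first eigenvalue $\lambda_1 = 0$. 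Since $\Sigma$ is closed and connected, the first eigenspace is one-dimensional and spanned by a strictly positive function, so $V$ never vanishes on $\Sigma$; this proves (i).

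For (ii), I would split into the two cases provided by (i). If $V$ never vanishes, the pointwise identity $cV = 0$ forces $c \equiv 0$; since $|A|^2$ is one of the non-negative summands of $c$, we get $|A|^2 \equiv 0$, i.e., $\Sigma$ is totally geodesic (and as a bonus $E = \langle E,N\rangle N$ along $\Sigma$). If instead $V \equiv 0$ on $\Sigma$, then $\Sigma \subset V^{-1}(0)$, which by Lemma \ref{lemma:propert}(ii) is an embedded totally geodesic hypersurface; since $\Sigma$ has the same dimension as $V^{-1}(0)$ and is closed and connected, it is relatively open and closed in $V^{-1}(0)$, hence a whole connected component, and therefore totally geodesic.

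The only mildly delicate ingredient is the simplicity and positivity of the first eigenfunction of $-L_\Sigma$ on the closed connected surface $\Sigma$, which is the standard Perron--Frobenius/Krein--Rutman principle for second-order elliptic operators. Everything else is a direct computation combined with the structural results of Lemma \ref{lemma:propert}.
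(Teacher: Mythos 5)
Your proof is correct and follows essentially the same route as the paper: both rest on the identity $\Delta_\Sigma V + \Ric_g(N,N)V + 2(\langle E,N\rangle^2 - |E|^2)V = 0$ obtained from \eqref{eq1}, \eqref{eq2} and minimality, use Cauchy--Schwarz together with the stability inequality tested against $V$, invoke the positivity of the first eigenfunction to get (i), and handle the case $V|_\Sigma \equiv 0$ via Lemma \ref{lemma:propert}(ii). The only difference is cosmetic: you package the argument as $L_\Sigma V = cV$ with $c \geq 0$, while the paper first drops $|A|^2$ and works with the auxiliary operator $\Delta_\Sigma + \Ric_g(N,N) + 2\langle E,N\rangle^2 - 2|E|^2$, then reinstates $|A|^2$ to conclude $\int_\Sigma |A|^2 V^2\, d\mu \leq 0$.
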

\begin{proof}
Let $N$ be a unit normal to $\si$. Since $\langle E,N\rangle^2 \leq |E|^2 $, by the stability inequality, for any $\phi\in C^1(\Sigma)$, 
\begin{align}
\int_\Sigma |\nabla_\Sigma \phi|^2 d\mu
& \geq  \int_\Sigma\left( |A|^2 + \Ric_g(N, N)\right) \phi^2 d\mu \nonumber \\
& \geq  \int_\Sigma\left[ |A|^2 + \Ric_g(N, N) + 2\left( \langle E,N\rangle^2 - |E|^2 \right) \right] \phi^2 d\mu \label{eq:stability}\\
&\geq  \int_\Sigma \left[ \Ric_g(N, N) + 2\left( \langle E,N\rangle^2 - |E|^2 \right) \right] \phi^2 d\mu. \nonumber
\end{align}
This implies that the first eigenvalue of the operator $\Delta_\Sigma + \Ric_g(N, N) + 2\langle E,N\rangle^2 - 2|E|^2$ is non-negative.

Using that $\Sigma$ is minimal and equations \eqref{eq1} and \eqref{eq2} we obtain 
\begin{align}
0 &= \Delta_\Sigma V + \hess V(N, N) - \Delta V \nonumber \\
&= \Delta_\Sigma V +\Ric_g(N, N)V + 2\left(\langle E,N\rangle^2 - |E|^2\right)V,\label{equation:Ric2}
\end{align}
so, either $V$ is the first eigenfunction with the zero eigenvalue, or $V|_{\si} \equiv 0$.

If $V|_{\Sigma} \equiv 0$, then $\Sigma \subset V^{-1}(0)$, hence by Lemma \ref{lemma:propert}, $\si$ is totally geodesic. Now, suppose that $V$ is the first eigenfunction. Then $V$ does not vanish on $\Sigma$. Combining \eqref{equation:Ric2} with \eqref{eq:stability}, we obtain $\int_\Sigma |A|^2 V^2 \ d\mu \leq 0$, which implies $\protect{|A|\equiv 0}$.
\end{proof}

Now we are ready to prove our main results.

\begin{theo}\label{char1}
Consider a complete electrostatic system $(M^3,g,V,E)$, such that $\Lambda + |E|^2 > 0$. Let $\Omega_1, \Omega_2$ be connected maximal regions where $V \neq 0$, such that $\overline\Omega_i$ is compact and let $\si = \partial\Omega_1\cap\partial\Omega_2 \subset V^{-1}(0)$ be unstable. Suppose $\partial\Omega_i\setminus \si$ is either empty or strictly stable, for $i=1,2$. Then $\si$ realizes the min-max width of $(\Omega_1\cup\Omega_2,g)$. In particular, $\si$ has index one.
\end{theo}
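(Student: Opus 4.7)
The strategy has three stages: (i) construct a continuous sweepout of \(\Omega := \bar\Omega_1 \cup \bar\Omega_2\) whose maximum-area slice is \(\si\), bounding both widths above by \(|\si|\); (ii) apply the min-max existence theorems (\ref{smoothminmax}, \ref{discreteminmax}) to extract a minimal surface realizing the width; and (iii) identify this minimal surface with \(\si\), forcing \(W = |\si|\) and \(\ind(\si) = 1\).

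For stage (i), first apply Theorem \ref{topology} to each \(\bar\Omega_i\) viewed as a compact electrostatic system: the hypothesis (\(\si\) unstable, other components strictly stable) gives that \(\bar\Omega_i\) is simply connected, homeomorphic to a closed 3-ball with finitely many disjoint open 3-balls removed, and every boundary component is a 2-sphere. Starting from \(\si\) with the unit normal pointing into \(\Omega_i\), the flow \eqref{flow} at speed \(V\) generates a smooth family of 2-spheres whose area is non-increasing by Proposition \ref{goodlemma}, and in fact \emph{strictly} decreasing: the rigid case of the proposition would force \(\si\) to be a totally geodesic leaf in a product or warped-product foliation, contradicting instability of \(\si\). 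Completeness of \((\Omega_i, V^{-2}g)\) with bounded geometry ensures the flow persists for all time, foliating \(\bar\Omega_i \setminus (\partial\bar\Omega_i\setminus\si)\). When \(\partial\bar\Omega_i\setminus\si = \emptyset\) the flow ultimately contracts to a point; when it is nonempty, I would invoke the Plateau-type theory from the Appendix to produce interpolating minimal surfaces of area strictly below \(|\si|\) connecting the asymptotic configuration of the flow to the strictly stable components of \(\partial\bar\Omega_i\). Reparametrizing and concatenating across \(i=1,2\) yields a continuous sweepout \(\{\Sigma_t\}_{t\in[-1,1]}\) of \(\Omega\) with \(\Sigma_0 = \si\), \(\max_t|\Sigma_t| = |\si|\), and \(\Sigma_{\pm1}\) equal to a point or to the strictly stable boundary of one \(\bar\Omega_i\). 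This bounds the Simon-Smith width above by \(|\si|\); Theorem \ref{conttodisc} transfers the bound to the Almgren-Pitts width.

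For stages (ii) and (iii), I apply Theorems \ref{smoothminmax} and \ref{discreteminmax} to the saturated class generated by this sweepout. The slices being 2-spheres (genus zero) verifies the genus hypothesis. The output is a minimal varifold \(\Sigma^\infty = \sum_j m_j \Sigma^\infty_j\) of total mass equal to the width, with at least one component in the interior of \(\Omega\), total index at most one, and every component a 2-sphere. Suppose some \(\Sigma^\infty_j\) is distinct from \(\si\): then \(\Sigma^\infty_j \subset \Omega_i\) where \(V \neq 0\). If it is stable, Lemma \ref{lemma:stability} forces it to be totally geodesic with \(V|_{\Sigma^\infty_j}\) the first Jacobi eigenfunction; the flow \eqref{flow} issuing from it therefore has constant area, and Proposition \ref{goodlemma}(i) provides an isometric product or warped-product splitting. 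By completeness of the conformally compactified metric this splitting persists indefinitely, embedding a complete noncompact cylinder into the compact region \(\bar\Omega_i\)---the contradiction used in Step 1 of the proof of Theorem \ref{topology}. The total index bound then leaves only the possibility that every component of \(\Sigma^\infty\) equals \(\si\), and the area bound \(W \leq |\si|\) forces multiplicity one. Hence \(W = |\si|\) and \(\si\) is a min-max surface, so \(\ind(\si) \leq 1\); combined with \(\si\) being unstable this yields \(\ind(\si) = 1\).

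The chief obstacle is ruling out a possible \emph{index-one} component of \(\Sigma^\infty\) disjoint from \(\si\) and lying in the region \(\{V \neq 0\}\), since Lemma \ref{lemma:stability} addresses only stable minimal surfaces. I would attack this by combining the sharp total-index bound (at most one unstable component, no stable interior components by the splitting argument above) with a tightening of the sweepout: because \(\si\) itself is a slice of the constructed sweepout, any putative min-max surface disjoint from \(\si\) of area at most \(|\si|\) could be inserted into a competing sweepout whose maximum-area slice has area strictly less than \(|\si|\), producing either a strict reduction of the width realized by a different surface or, combining with the splitting argument, the same cylindrical contradiction. A secondary technical difficulty is the sweepout construction when \(\partial\bar\Omega_i\setminus\si\) is nonempty and the flow does not collapse to a point; here the Plateau-type theory from the Appendix must be delicately combined with the flow foliation to guarantee a globally continuous sweepout of \(\bar\Omega_i\) whose slices all have area at most \(|\si|\).
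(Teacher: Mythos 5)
There is a genuine gap, and it occurs at the very first stage. Your sweepout is built by running the flow \eqref{flow} ``starting from $\si$'', but $\si\subset V^{-1}(0)$ and the flow has speed $V(\Phi_t(p))$, so by uniqueness for the ODE the flow starting at $\si$ is stationary: it never leaves $\si$. Even if you first push $\si$ slightly into $\Omega_i$ and then flow, the monotonicity in Proposition \ref{goodlemma} is proved only for a \emph{minimal} initial surface (the Gr\"onwall argument uses $H_0=0$), which the pushed-off spheres are not; and nothing in the proposition guarantees that the flow foliates all of $\Omega_i$, contracts to a point, or limits onto the strictly stable components of $\partial\Omega_i$. The paper's construction of the low-area sweepout is entirely different and is the heart of the proof: one pushes $\si$ off by the first Jacobi eigenfunction to get slices of area $\le|\si|-\delta$, and then argues by contradiction that the remaining region $\Omega_i^{\varepsilon}$ admits a sweepout with maximum area $\le|\si|-\delta/2$ --- for otherwise its Simon--Smith width would strictly exceed the area of its strictly stable boundary, Theorem \ref{smoothminmax} would produce an unstable closed minimal surface in the interior, and minimizing area in its homology class would produce a closed minimal surface with stable two-sided cover in $\interior\Omega$, contradicting Step 1 (which your proposal does reproduce correctly via Theorem \ref{topology} and Lemma \ref{lemma:stability}).

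The second gap is in your identification stage. You assume that a min-max component $\Sigma^{\infty}_j$ distinct from $\si$ must lie in $\{V\neq 0\}$ and you treat only the case of a component \emph{disjoint} from $\si$; but a closed minimal surface distinct from $\si$ can perfectly well cross $V^{-1}(0)$ (Lemma \ref{lemma:stability} constrains only stable surfaces), and in fact, since $\Omega$ contains no minimal surface with stable two-sided cover, the Frankel-type property forces the min-max surface to \emph{intersect} $\si$ whenever it differs from it. That intersecting case --- transverse intersections, and tangential ones producing prong singularities --- is exactly where the paper spends most of its effort: it proves $|\si|\le|\si^{\infty}|$ by solving Plateau-type problems (Meeks--Yau and the Appendix) for the curves of intersection inside $\Omega_1$ or $\Omega_2$ and invoking the rigidity of Proposition \ref{goodlemma} to force hemispherical splittings, whence $W=|\si|$, $\si$ is a min-max surface, and the Marques--Neves index bound gives $\ind(\si)=1$. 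Your proposed remedy for the ``chief obstacle'' (inserting a putative min-max surface disjoint from $\si$ into a competing sweepout) does not address this crossing configuration, so the identification of the min-max limit with $\si$ is not established.
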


\begin{proof}
{\bf Step 1: $\Omega = \Omega_1\cup\Omega_2$ does not contain any embedded closed minimal surface whose orientable 2-cover is stable.}\\ 

By Step $1$ of Theorem \ref{topology}, $\interior\Omega_i$, $i=1,2$, does not contains any embedded closed minimal surface whose orientable 2-cover is stable.

Now, suppose that there is an embedded, connected, closed minimal surface $\tilde{\si} \subset (\Omega,g)$ whose orientable 2-cover is stable and such that $\tilde{\si}\cap\si \neq \emptyset$. Up to taking a double cover of $\Omega$, we can suppose that $\tilde{\si}$ is orientable. By Lemma \ref{lemma:stability} either $|V| >0$ on $\tilde{\si}$ or $V|_{\tilde{\si}}\equiv 0$. Since $\tilde{\si}\cap V^{-1}(0) \neq \emptyset$ the second option holds. Since \( V^{-1}(0) \) is an embedded surface, \( \tilde{\si} \subset V^{-1}(0) \) and $\si$ is connected (by Theorem \ref{topology}) we have necessarily $\tilde{\si} = \si$, which contradicts the fact that $\si$ is unstable. \\

\noindent
{\bf Step 2: There is a sweepout $\{\Sigma_t\}_{t\in[-1,1]}$ of $\Omega$ such that
$\mathbf{L}(\{\Sigma_t\})=|\Sigma|$, $\si_{-1} = \partial\Omega_1\setminus\si$, $\si_1 = \partial\Omega_2\setminus\si$, $\si_0 = \si$ and, for any
$\varepsilon>0$, there is $\delta>0$ such that $|\Sigma_t| \leq |\Sigma|-\delta$, if $|t| \ge \varepsilon$.}\\

First, by Step 5 of Theorem \ref{topology}, $\Omega_i$ is is homeomorphic to a closed $3$-ball minus a finite number of disjoint open 3-balls, $i=1,2$, so $\Omega$ admits a sweepout by spheres. In the remaining of the proof we argue as in \cite[Proposition 18]{MR}. The surface $\si$ separates $\Omega$ in two connected components $\Omega_1$ and $\Omega_2$, so is sufficient to construct a sweepout $\{\Sigma_t^i\}_{t\in[0,1]}$ as in the statement on $\Omega_i$, $i=1,2$. In fact, defining $\Sigma_t=\Sigma_{-t}^1$ if $t\le0$ and $\Sigma_t=\Sigma_t^2$ if $t\ge0$, the sweepout $\{\Sigma_t\}_{t\in[-1,1]}$ satisfies the properties stated.

Since $\Sigma$ is unstable, the first eigenvalue $\lambda_1$ of the Jacobi operator is negative. Also, we can choose a first eigenfunction $u_1$ associated to $\lambda_1$ to be positive. Let $N_i$ be the unit normal along $\si$ which points towards $\interior\Omega_i$. For $\varepsilon>0$ small enough,
the map $\Phi: \Sigma\times[0,\varepsilon]\to \Omega_i, \Phi(p,t) = \exp_p(tu_1(p)N_i(p))$ is well defined.

We then define $\Sigma_t^i=\Phi(\Sigma,t)$ and $\Omega_i^t = \Omega_i\setminus \Phi(\Sigma\times[0,t))$. Choose 
$\varepsilon>0$ arbitrarily small, so that $\{\Sigma_t^i\}_{t\in[0,\varepsilon]}$ defines a foliation of a neighborhood of $\Sigma$ whose leaves $\Sigma_t^i$ have non
vanishing mean curvature vector pointing towards $\Omega_i^t$.
Thus $|\Sigma_t^1|$ is decreasing for \( t \in [0, \epsilon] \) and, therefore, $|\Sigma_\varepsilon^i| < |\Sigma|-\delta$ for some $\delta>0$. Now in
order to construct the sweepout announced in the Step 2, it is sufficient to
construct a sweepout $\{\Sigma_t^i\}_{t\in[\varepsilon,1]}$ of $\Omega_i^\varepsilon$ such that
$\mathbf{L}(\{\Sigma_t^i\}_{t\in[\varepsilon,1]})\le |\Sigma|-\delta/ 2$. Indeed, we can glue such a
sweepout with the foliation $\{\Sigma_t^i\}_{t\in[0,\varepsilon]}$ to produce the sweepout of $\Omega_i$.

So let us assume by contradiction that any continuous sweepout $\{\Sigma_t^i\}_{t\in[\varepsilon,1]}$ of
$\Omega_{\varepsilon}^i$ satisfies \( \mathbf{L}\left(\{\Sigma_t^i\}_{t\in[\varepsilon,1]}\right) > |\Sigma|-\delta/2 > |\Sigma_{\varepsilon}^i| + \delta/2 \).
Let $\Pi$ be the smallest saturated set  containing \( \{\Sigma_t^i\}_{t \in [\varepsilon,1]} \). Since $\partial\Omega_i\setminus \si$ is strictly stable, we have $W(\Omega_i^\varepsilon,\Pi) > |\partial\Omega_i\setminus\si|$ (as proved in Appendix of \cite{KLS}).
Then the min-max Theorem \ref{smoothminmax}, together with Step $1$, implies that there is an unstable minimal surface $S$ in $\interior\Omega_i^{\varepsilon}$. 

By Theorem \ref{topology}, $\Omega$ is simply-connected, so the same holds for $\Omega_i^{\varepsilon}$. Then, the surface $S$ is orientable and separates $\Omega_i^{\varepsilon}$. If $\Gamma = \partial\Omega_i\setminus\si$ is non-empty, reasoning as in Step 2 of Theorem \ref{topology}, we conclude that $S$ is connected and homologous to $\Gamma$, and hence $S$ is homologous to $\si$. Now, suppose $\Gamma$ is empty. We know that $\mathrm{int}\,\Omega$ contains no embedded closed minimal surface whose orientable 2-cover is stable (by Step 1), then any two closed minimal surfaces in $\mathrm{int}\,\Omega$ have to intersect (see \cite{MPR}, Theorem $9.1$). Thus the min-max Theorem \ref{smoothminmax} implies that the sphere $S$ is connected. Moreover, $\Omega_i$ is a compact manifold with connected boundary, so by Theorem \ref{topology} it is diffeomorphic to a $3$-ball. Hence $S$ is homologous to $\si$.

Let $\widetilde{\Omega}_i$ be component of $\overline{\Omega_i\setminus S}$ which contains $\si$. Then, we can minimize area on the homology class of $S$ inside $\tilde\Omega_i$, and produce a minimal surface $S'$ on $\interior\widetilde\Omega_i \subset \interior\Omega_i$ whose orientable 2-cover is stable. However, this leads to a contradiction with Step 1.

Thus, we have proved that any minimal surface $S$ in $\Omega_i^\varepsilon$ produced by the Theorem \ref{smoothminmax} leads to a contradiction. Therefore there is a sweepout as in the statement of the claim.\\

\noindent
{\bf Step 3: $\si$ realizes the width and has index one.}\\

Denote by $\Pi$ the smallest saturated set containing the sweepout of Step 2, and let $W(\Pi)$ be the associated width. By Theorem \ref{smoothminmax} there exist disjoint closed embedded minimal spheres $\Sigma^\infty_i\subset (\Omega,g)$, $i= 1,\ldots,k$, and positive integers $n_1,\cdots,n_k$, such that $W(\Pi) = \sum_{i=1}^{k}n_i|\Sigma^{\infty}_{i}|$. Since $\interior(\Omega)$ has no embedded closed minimal surface whose orientable 2-cover is stable, any two closed minimal surfaces in $\interior(\Omega)$ have to intersect (see \cite{MPR}, Theorem $9.1$). Thus, exactly one of the surfaces $\si^{\infty}_{i}$, let us say $\si^{\infty}_{1}$, is contained in $\Omega$, and $\Sigma^{\infty}_{1}\cap\si \neq \emptyset$.

By Step $2$ the sweepout $\{\si_t\}_{t \in [-1,1]}$ satisfies ${\bf L}\left(\{\si_t\}\right) = |\si|$. Thus 
\begin{equation}\label{WLB2}
W(\Pi) \leq |\si|. 
\end{equation}
Moreover
\begin{equation}\label{WLB}
W(\Pi) = \sum_{i=1}^{k}n_i|\Sigma^{\infty}_{i}| \geq |\Sigma^{\infty}_{1}|.
\end{equation}
If $\si = \si^{\infty}_{1}$, then by \eqref{WLB2} and \eqref{WLB}, we have $W(\Pi) = |\si|$. Hence the sweepout $\{\si_t\}_{t \in [-1,1]}$ realizes $W(\Pi)$. So, it is an optimal sweepout and $\si$ is a min-max surface.
 
Thus, suppose $\si \neq \si^{\infty}_{1}$. We claim that $|\si| \leq |\si^{\infty}_{1}|$. The proof of this fact is divided in two cases:

\begin{description}
\item[Case 1] $\si$ and $\si^{\infty}_{1}$ intersect transversely.\\

In this case $\si\cap\si^{\infty}_{1}$ consists of a finite number $m$ of embedded smooth closed curves, which are pairwise disjoint. So we are left with the following sub-cases.\\

\noindent
(1a) $m = 1$\\

On this case, $\Sigma \setminus \si^\infty = S_1\cup S_2$, $\Sigma^\infty \setminus \si = \Gamma_1\cup \Gamma_2$, and $\partial S_i = \partial \Gamma_j = \gamma$, for $i,j \in \{1,2\}$. We can assume (without loss of generality) that $|S_1| \leq |S_2|$ and $\Gamma_1 \subset \Omega_1$. 

Suppose $|\Gamma_1| < |S_1|$. By \cite[Theorems 1 and 2]{MeeYau} there exists an embedded disk $D$ which minimizes area among disks contained in $\Omega_1$ and whose boundary is $\gamma$, moreover, either $D \subset \si$ or $D\cap\si = \gamma$. By the assumption, necessarily the second case holds. Denote $D_{t} = \Phi(D,t)$, where $\Phi$ is the flow defined by \eqref{flow}. Using Proposition \ref{goodlemma} and arguing as in the third paragraph of Proposition \ref{injective} we conclude that for all $t$ such that the flow is defined, $D_{t}$ is isometric to the hemisphere $\sw_+$ of constant curvature $\frac{\Lambda}{3}$. Arguing as in the last paragraph of Proposition \ref{injective} we conclude that the map $\Phi$ is defined for all time $t$, $\Omega_1$ is isometric to the canonical hemisphere $\st_+$ of constant curvature $\frac{\Lambda}{3}$, and when $t \to \pm\infty$ the surfaces $D_{t}$ converge to either $S_1$ or $S_2$. Thus, $S_1$, $S_2$ and $D$ are all isometric, which contradicts the inequality $|D| \leq |\Gamma_1| < |S_1|$. Thus $|\Gamma_1| \geq |S_1|$.

Suppose now, $|\Gamma_1| < |S_2|$. Let $U$ be the component of $\Omega_1\setminus\Gamma_1$ which contains $S_2$. Then $\overline U$ is a mean convex domain with piece-wise smooth boundary. Using again \cite[Theorems 1 and 2]{MeeYau} we find an embedded disk $\tilde D$ which minimizes area among disks contained in $\overline U$ and whose boundary is $\gamma$, and by the assumption \( \tilde D \not\subset \si \), necessarily $\tilde D\cap\si = \gamma$. Arguing as in the last paragraph we conclude that $\Omega_1$ is isometric to the canonical hemisphere $\st_+$ of constant curvature, and $S_2$, $\Gamma_1$ and $\tilde D$ are isometric to half-equators on this hemisphere, which is again a contradiction. Hence $|\Gamma_1| \geq |S_2|$.

We can argue similarly to prove that $|\Gamma_2| \geq \max\{|S_1|,|S_2|\}$. Therefore 
$$|\si| = |S_1| + |S_2| \leq |\Gamma_1| + |\Gamma_2| = |\si^{\infty}_{1}|.$$

\noindent
(1b) $m \geq 2$\\

Let $S$ be a connected component of $\Sigma\setminus\si^\infty$. Since $m \geq 2$ and $\si$, $\si^{\infty}_{1}$ are embedded spheres, necessarily $\Sigma^\infty\setminus\si$ has exactly one connected component (which we denote by $\Gamma$) whose boundary is $\partial S$. We can assume (without loss of generality) that $\Gamma \subset \Omega_1$. 

Suppose that $|\Gamma| < |S|$. Let $U$ be the oriented surface which minimizes area among surfaces contained in $\Omega_1$ and whose boundary is $\partial S$ (see  Appendix \ref{appendix}). By our hypothesis, necessarily $U\cap\si = \partial S$. Consider a connected component $D$ of $U$. We have,
\begin{equation}\label{eq.case1b}
|D| \leq |U| \leq |\Gamma| < |S|.
\end{equation}
On the other hand, we can proceed as in the second paragraph of case (1a) and conclude that $D$ is isometric to is isometric to the hemisphere $\sw_+(\frac{\Lambda}{3})$, and $\Omega_1$ is isometric to the hemisphere $\st_+(\frac{\Lambda}{3})$. Thus, $\partial D$ separates $\si$ in two connected components $D_1,D_2$, each one isometric to $\sw_+(\frac{\Lambda}{3})$, and $S \subset D_2$. So,
$$|S| \leq |D_2| = |D|,$$
which contradicts \eqref{eq.case1b}. Therefore, $|S| \leq |\Gamma|$.

Since $S$ was an arbitrary connected component of $\Sigma\setminus\si^\infty$, we have $|\Sigma|\leq|\si^\infty|$.
\\

\item[Case 2] $\si$ and $\si^{\infty}_{1}$ intersect tangentially at some
point.\\

In this case, since we are supposing $\si \neq \si^{\infty}_{1}$, the Maximum Principle implies that we can not have one surface contained on one side of the other. So, by Lemma 1.4 of \cite{FHS}, if $p\in \Sigma\cap \Sigma^{\infty}$ is a point where the surfaces are tangent, then there exists a neighborhood $\mathcal{U}$ of $p$ such that $\Sigma\cap\Sigma^{\infty}\cap\mathcal{U}$ is given by $2k$ arcs, $k\geq 2$, starting at $p$ and making equal angle. We call such a point $p$ a $k$-prong singularity. It follows from this description that any connected component $S$ of $\Sigma\setminus\si^\infty$ (or $\Sigma^\infty\setminus\si$) whose closure $\bar{S}$ contains $k$-prong singularities, is such that $\bar{S}$ is an almost properly embedded surface (in the sense of definition \ref{def:alm.prop.emb}).

Let $S$ be a connected component of $\Sigma\setminus\si^\infty$, such that $\overline{S}$ contains a point of tangency. By the structure of $\Sigma\cap\si^\infty$ and the fact that $\Sigma$, $\si^\infty$ are embedded spheres, it follows that there is exactly one connected component (which we denote by $\Gamma$) of $\Sigma^\infty\setminus\si$ whose boundary is $\partial S$. We want to prove that $|S| \leq |\Gamma|$. On cases (1a) and (1b) we faced similar situations. There, we supposed $|\Gamma| < |S|$ and considered the solution of the area minimization problem with fixed boundary $\partial S$ on a suitable domain. In the case, $\partial S$ has no singularities, the solution of this problem is a smooth surface with boundary, so we can apply  Proposition \ref{goodlemma} to obtain a contradiction. We want to use the same strategy here. The main difficulty now is that a regularity theorem near the singularities of $\partial S$ it is not available in the literature.

However, we claim that is still possible to argue as in cases (1a) and (1b). Assume (without loss of generality) that $\Gamma \subset \Omega_1$. Suppose, $|\Gamma| < |S|$. Let $T$ be a solution of the following problem
$${\bf M}(T) = \inf\{{\bf M}(\tilde{T}); \ \tilde{T} \in \mathfrak{C}\},$$
where
\begin{eqnarray*}
\mathfrak{C} = \{\tilde T \in {\bf I}_2(M); \ \spt \tilde T \subset \Omega_1 \ \mbox{and} \ \spt \bigl(\partial \tilde T\bigr) = \partial S\}.
\end{eqnarray*}
Let $p_1,\cdots,p_n$ be the singularities of $\partial S$. The results in Appendix \ref{appendix} imply that $(\spt T)\setminus(\partial T)$ is a smooth embedded surface and for any $p \in \partial S\setminus\{p_1,\cdots,p_n\}$ there is a neighbourhood $W$ of $p$ such $\spt T\cap W$ is a smooth embedded surface with boundary. Let $D$ be a connected component $\spt T\setminus\{p_1,\cdots,p_n\}$ such that $\partial \bar{D}$ is not empty ($\bar{D}$ is the topological closure of $D$). Then $\bar{D}$ is an almost properly embedded surface (in the sense of Definition \ref{def:alm.prop.emb}) whose boundary is $\partial S$, so  Lemma \ref{goodlemma} is applicable for it. Arguing as before, we conclude that $\bar D$ is isometric to a domain $U$ bounded by geodesics in $\sw(\frac{\Lambda}{3})$, the region bounded by $S$ and $\bar D$ is isometric to a region in $\st(\frac{\Lambda}{3})$, and $S$ is contained in a domain of $\si$ which is isometric to $U$. Thus,
$$|S| \leq |U| \leq {\bf M}(T) \leq |\Gamma|,$$
which is a contradiction with our assumption. Therefore $|S| \leq |\Gamma|$. 

Combining this we the previous case we conclude $|\si| \leq |\si^{\infty}_{1}|$.
\newline
\end{description}

Therefore, $|\si| = |\si^{\infty}_{1}| = W(\Pi)$, and as in the case $\si = \si^{\infty}_{1}$ we conclude that $\si$ is a min-max surface. By the index estimates of Marques and Neves \cite{MaNeindexbound}, it follows that $\si$ has index at most one. Since $\si$ is unstable, it must have index equal to one.
\end{proof}


\begin{theo}\label{char2}
Consider a complete electrostatic system $(M^3,g,V,E)$, such that $\Lambda + |E|^2 > 0$. Let $\Omega_1, \Omega_2$ be connected maximal regions where $V \neq 0$, such that $\overline\Omega_i$ is compact and let $\si = \partial\Omega_1\cap\partial\Omega_2 \subset V^{-1}(0)$ be unstable. Suppose $\partial(\Omega_1\cup\Omega_2)$ is non empty and stable, and at least one of its components is degenerate. Then $\si$ has index one.
\end{theo}

\begin{proof}
Denote $\Omega = \Omega_1\cup\Omega_2$. The idea of the proof is the following. We consider a certain sequence of metrics $\{g_i\}$ converging to $g$ and such that $g_i = g$ in a neighborhood of $\si$. Then we will prove that for each $i$, $\si$ is a minimal surface of index 1 in $(M,g_i)$ which can be obtained by min-max methods. \\

\noindent
{\bf Step 1: Construction of the sequence of metrics.}\\ 

Define $T_{r}(\partial \Omega) = \big\{x \in M;\, \mathrm{dist}_g(x,\partial \Omega) \leq r\big\}$. Choose $\delta > 0$ sufficiently small so that the function $x \mapsto \mathrm{dist}_g(x,\partial \Omega)^2$ is smooth in $T_{3\delta}(\partial \Omega)$, and such that $\si\cap T_{3\delta}(\partial \Omega) = \emptyset$. Let $\eta$ be a smooth function such that $\eta \equiv 1$ in $T_{\delta}(\partial \Omega)$ and $\eta \equiv 0$ in $M\setminus T_{2\delta}(\partial \Omega)$, with $0\leq \eta \leq 1$ in $\Omega$. Now, define $h(x) = \eta(x)\,\mathrm{dist}_g(x,\partial \Omega)^2$ for $x \in T_{3\delta}(\partial \Omega)$, and $h(x) = 0$ for $x \in M\setminus T_{3\delta}(\partial \Omega)$. Then $h: M \to \mathbb{R}$ is a smooth function which coincides with $\mathrm{dist}(\cdot,\partial \Omega)^2$ in a neighborhood of $\partial \Omega$.

Define the sequence of metrics $g_i = e^{\frac{2}{i}h}g$, where $i$ is a positive integer. Observe that for any $i$ we have $g_i = g$ in $M\setminus T_{3\delta}(\partial \Omega)$, so $\si$ is still minimal with respect to $g_i$. Also, as proved in \cite[Proposition 2.3]{IMN}, $\partial\Omega$ is still minimal with respect to $g_i$, and the spectrum of its Jacobi operator satisfies
$$\mathrm{spec}\big(L_{\partial\Omega,g_i}\big) = \mathrm{spec}\big(L_{\partial\Omega,g}\big) + \frac{4}{i}.$$
Thus, each component of $\partial\Omega$ is strictly stable in $(M,g_i)$, $\forall\,i$.\\

\noindent
{\bf Step 2: There is a foliation of a neighborhood of $\partial\Omega$ such that, with respect to any of the metrics $g_i$, the leaves have non-vanishing mean curvature vector pointing towards $\partial\Omega$.}\\

Write $\partial\Omega = \cup_{j=1}^{\ell}\partial_j \Omega$. Arguing as in Proposition 3.2 of \cite{BBN} it follows that there is there exists $\epsilon > 0$ and $\Psi: \partial\Omega\times [0,\epsilon]\to (\Omega,g)$ which is a diffeomorphism over its image, such that $\forall\, j$ the following holds:
\begin{enumerate} 
\item $\Psi|_{\partial_j\Omega\times\{0\}}$ is the identity;
\item $S_{j,t} = \Psi(\partial_j\Omega\times\{t\})$ is either a minimal surface or has non-vanishing mean curvature.
\end{enumerate}
We denote by $H_{j,t,g}$the mean curvature of $S_{j,t}$ with respect to the unit normal $N_{j,t}$ which points away from $\partial\Omega$.

Suppose there exist $0 < t_1 < t_2$ such that $S_{j,t_1}$ and $S_{j,t_1}$ are minimal surfaces. Minimizing area in the homology class of $S_{j,t_1}$ inside the region between $S_{j,t_1}$ and $S_{j,t_2}$, we obtain an embedded closed minimal surface in $(\mathrm{int}\,\Omega,g)$ whose orientable 2-cover is stable. However, as in Step 1 of the previous theorem we can prove that $(\mathrm{int}\,\Omega,g)$ contains no embedded minimal surface whose orientable 2-cover is stable. So we have a contradiction. Thus, decreasing $\epsilon$ if necessary, the mean curvature satisfies 
$$H_{j,t,g}(p) \neq 0,\ \forall\, p\in S_{j,t},\ \forall\, 0 < t \leq \epsilon.$$
This implies $\partial_j\Omega$ minimizes area in its homology class. Hence, by the first variation formula, we have $H_{j,t,g} > 0$ necessarily.

Consider $\epsilon_0$ such that $S_{j,t} \subset T_{\delta}(\partial\Omega), \forall\, j, \forall\, t \in (0,\epsilon_0)$. In $T_{\delta}(\partial\Omega)$ it holds $\nabla_g h = 2\,\mathrm{dist}_g(\cdot,\partial\Omega)\nabla_{g}\big(\mathrm{dist}_g(\cdot,\partial\Omega)\big)$. Thus, since $\nabla_{g}\big(\mathrm{dist}_g(\cdot,\partial\Omega)\big)$ points away from $\partial\Omega$, we have $\langle\nabla_g\,h,N_{j,t}\rangle \geq 0$. Finally, the formula for the mean curvature of $S_{j,t}$ with respect to the metric $g_i$ give us
\begin{align*}
H_{j,t,g_i} &= e^{-\frac{1}{i}h}\left(H_{j,t,g} + \frac{1}{i}\langle\nabla_g h,N_{j,t}\rangle\right) > 0.
\newline
\end{align*}

\noindent
{\bf Step 3: There is an infinite set $\mathbb{N}^{\prime} \subset \mathbb{N}$, such that for any $i \in \mathbb{N}^{\prime}$, the Riemannian manifold $(\mathrm{int}\,\Omega,g_{i})$ does not contain embedded minimal surfaces whose orientable 2-cover is stable, and whose area is less than or equal to $|\si|_{g}$.}\\ 

Suppose that for any subsequence $\{g_{i_k}\}$ there is $\Sigma_{i_k}$ a connected stable embedded minimal surface in $(\mathrm{int}\,\Omega,g_{i_k})$ with area smaller than $|\si|_g$, for each $i_k$. Arguing as in Proposition B.1 in \cite{MaNe} we conclude that a subsequence of $\{\si_{i_k}\}$ converges to a closed stable minimal surface $\widetilde{\si}$ in $(\Omega,g)$ whose area is less than or equal to $|\si|_{g}$. We claim that $\widetilde{\si} \subset \mathrm{int}\,\Omega$. By the previous step and the maximum principle the region between $\partial\Omega$ and $\cup_{j=1}^{\ell}S_{j,\epsilon_0}$ contains no minimal surface other than $\partial\Omega$. So, for any $i_k$ we must have $\si_{i_k}\cap\big(\Omega\setminus(\cup_{j=1}^{\ell}\cup_{t=0}^{\epsilon_0}S_{j,t})\big) \neq \emptyset$. So, $\widetilde{\si}\cap\mathrm{int}\,\Omega \neq \emptyset$, and the claim follows by the maximum principle. 

However, the existence of $\widetilde{\si}$ contradicts that $(\mathrm{int}\,\Omega,g)$ contains no embedded minimal surface whose orientable 2-cover is stable. \\ 

\noindent
{\bf Step 4: For any $i \in \mathbb{N}^{\prime}$, there is a sweepout $\{\Sigma_t\}_{t\in[-1,1]}$ of $(\Omega,g_i)$ such that
$\mathbf{L}(\{\Sigma_t\})=|\Sigma|$, $\si_{-1} = \partial\Omega_1\setminus\si$, $\si_1 = \partial\Omega_2\setminus\si$, $\si_0 = \si$, and for any
$\varepsilon>0$, there is $\delta_0>0$ such that $|\Sigma_t|_{g_i} \leq |\Sigma|_{g} -\delta_0$, if $|t| \ge \varepsilon$.}\\

Observe that for any embedded surface $S \subset \Omega$  and $k > i$ we have
\begin{equation}\label{eq:ineq.metrics}
|S|_{g_i} = \int_{S} e^{\frac{2}{i}h}\,d\mu_g \geq \int_{S} e^{\frac{2}{k}h}\,d\mu_g = |S|_{g_k}.
\end{equation}
So we will construct the sweepout for $(\Omega,g_1)$, and the same family will have the required properties with respect to any of the metrics $g_i$.

Now, the proof follows exactly as in Step 2 of the previous theorem. The only difference being the following one. Recall that in the construction we obtain a region inside $\Omega$ whose boundary consists of $\si$ and a unstable minimal surface $S$. Then we minimize area in the homology class of $\si$ to obtain an embedded minimal surface in $\mathrm{int}\,\Omega$ whose orientable 2-cover is stable. The contradiction on the previous case follows from the fact $(\mathrm{int}\,\Omega,g)$ admits no such surface. In the present situation the surface obtained by minimization necessarily has area less than or equal to $|\si|_{g}$ and this contradicts the Step 2 of the current proof.\\ 

\noindent
{\bf Step 5: There is an embedded minimal sphere $\si^{\infty}$ in $(\mathrm{int}\,\Omega,g)$ whose index is equal to one and the area is equal to $|\si|_{g}$. This implies $\si$ has index 1 with respect to $g$}.\\ 

Consider the subsequence $\{g_{i}\}_{i \in \mathbb{N}^{\prime}}$ of step 3. Let $\Pi$ be the smallest saturated set containing the sweepout of step 4 (which is the same for all $i$), and let $W_{i}$ be the associated width in $(\Omega,g_i)$. Applying Theorem \ref{smoothminmax} and the index estimates \cite{MaNeindexbound}, we obtain an embedded minimal sphere $\si_{i}^{\infty}$ in $(\mathrm{int}\,\Omega,g_i)$ (one of the components of the min-max surface) of index at most one. Moreover, by construction
$$|\si_{i}^{\infty}|_{g_i} \leq W_{i} \leq |\si|_g,$$
and so by step $3$, $\si_{i}^{\infty}$ has index one.

Now we want to conclude that $W_{i} = |\si|_g = |\si|_{g_i}$, so that $\si$ realizes the maximum of the area in an optimal sweepout and hence it is a min-max surface which realizes the width $W_{i}$. This implies that $\si$ has index one with respect to all the metrics $g_i$, and by the lower semi-continuity of the index the same holds for the metric $g$. We could try the same strategy as in the proof of the previous theorem, comparing $|\si_{i}^{\infty}|_{g_i}$ with $|\si|_{g}$, however on that case we have used that there was $(V,E)$ solving the electrostatic equations with respect to the metric $g$, and this is not necessarily true for $g_i$. So we will apply that argument to a limit of $\{\si_{i}^{\infty}\}_{i}$ on $(M,g)$.

By \cite[Theorem 2.1]{W2}, a subsequence of $\{\si_{i}^{\infty}\}_{i}$ converges to an embedded minimal surface $\si^{\infty}$ in $(\Omega,g)$ whose index is at most one and whose area is less than or equal to $|\si|_g$. Arguing as in Step 3 we conclude that $\si^{\infty} \subset \mathrm{int}\,\Omega$, hence the index of $\si^{\infty}$ is one. Also, by the characterization in \cite{W2}, the multiplicity in the convergence is one, otherwise $\si^{\infty}$ would be non orientable or stable orientable, and $(\Omega,g)$ does not contain such type of surfaces. Since, $\si_{i}^{\infty}$ is a sphere for all $i$, this implies $\si^{\infty}$ is also a sphere. 

Now, we can proceed as in the step 3 of the previous theorem to prove that $|\si^{\infty}|_g = |\si|_{g}$. Since $|\si_{i}^{\infty}|_{g_{i}} \to |\si|_g$ and $|\si_{i}^{\infty}|_{g_{i}} \leq W_{i} \leq |\si|_g$, it follows that $ W_{i}$ converges to $|\si|_g$. Suppose that $W_{i} < |\si|_{g}$ for some $i$. By equation \eqref{eq:ineq.metrics} and the fact the class of sweepouts is the same we have 
$$W_{k} \leq W_{i} < |\si|_{g},\ \forall\, k > i,$$ 
which contradicts the fact that the sequence of widths converges to $|\si|_g$. Thus $W_{i} = |\si|_{g}$ for any $i \in \mathbb{N}^{\prime}$. 
\end{proof}

\begin{rem}
The only known examples of electrostatic systems such that $V^{-1}(0)$ contains a stable degenerate component are the ones where $(M,g)$ is isometric to a standard cylinder $(I\times\mathbb{S}^2,ds^2 + g_{\mathbb{S}^2})$. In these cases there is no unstable component in $V^{-1}(0)$. So, it is an open question if the situation in the previous theorem can actually happen.
\end{rem}

\section{Area-Charge Inequality and Rigidity}\label{Area-Charge ineq}

\subsection{Area-Charge Inequality}

Consider a time-oriented spacetime $(\mathcal{M}^4,\mathfrak{g})$, and a 2-form $F$ on $\mathcal{M}$. The Einstein-Maxwell equations for uncharged matter are
\begin{eqnarray*}
  \Ric_{\mathfrak{g}}-\frac{R_{\mathfrak{g}}}{2} \mathfrak{g} + \Lambda \mathfrak{g} = 8\pi (T_F + T),\\
  dF=0, \quad \diver_{\mathfrak{g}} F=0,
\end{eqnarray*}
where $T_F$ is the energy-moment tensor of the electromagnetic field, defined in \eqref{energ.mom}, and $T$ is a symmetric covariant $2$-tensor field on $\mathcal{M}$, which represents the energy-moment tensor of the non-electromagnetic matter.
 
A \emph{Cauchy data} for these equations (without a magnetic field) is a tuple 
$$(M^3,g,K,E,\mu,J),$$ where $(M^3,g)$ is an oriented Riemannian 3-manifold such that $M \subset \mathcal{M}$ and $g = \mathfrak{g}|_M$, $K$ is the second fundamental form of $M$ inside $\mathcal{M}$ with respect to the future directed unit timelike normal $n$, $E \in \mathfrak{X}(M)$ is the electric field, and $\mu := T(n,n)$, $J := T(\cdot,n)$ are respectively the energy density and the momentum density of the non electromagnetic fields. Moreover, the data satisfy the constraint equations
\begin{align*}
R_g + \bigl(\tr_g K\bigr)^2 - |K|_{g}^2 - 2\Lambda &= 2|E|^2 + 16\pi\mu,\\
\diver_g\bigl(K - (\tr_g K)g\bigr) &= 8\pi J,\\
\diver_g E &= 0.
\end{align*} 

In this setting, we say it holds the Dominant Energy Condition for the non electromagnetic fields if (\cite[Section 2]{DG})
$$T(v,v) \geq 0,$$ 
for all future directed causal vectors. This condition implies $\mu \geq |J|_{g}$ (\cite[Section 8.3.4]{GourgoulhonFormalismGeneralRelativity2012}), so assuming $(M,g,K)$ is \emph{maximal}, i.e., $\tr_g K =0$, we obtain
$$R_g \geq 2\Lambda + 2|E|^2.$$

Let $\Sigma \subset M$ be a closed orientable embedded surface with a unit normal $N$. Recall the expression \eqref{defcharge} for the charge $Q(\si)$. Since $\diver_g E=0$, it follows from the Divergence Theorem that the charge depends only on the homology class of $\si$. This context motivates the assumptions of the following result.

\begin{prop}\label{mainineq}
Let $(M^3,g)$ be an oriented Riemannian manifold and $E \in \mathfrak{X}(M)$ such that $R_{g} \geq 2\Lambda + 2|E|^2$, for some $\Lambda \in \re$. Suppose $\si \subset (M,g)$ is an orientable closed minimal surface of index one, with unit normal $N$. Then,
\begin{equation}\label{areacharge}
\Lambda|\si| + \frac{16\pi^2 Q(\si)^2}{|\si|} \leq 12\pi + 8\pi\Biggl(\frac{g(\si)}{2} - \biggl[\frac{g(\si)}{2}\biggr]\Biggr).
\end{equation}
In particular, if $\Lambda > 0$ and $g(\si) = 0$ we have
\begin{eqnarray}
Q(\si)^2 &\leq& \frac{9}{4\Lambda}\label{chargebound}\;,\\
\frac{2\pi}{\Lambda}\Bigl(3 - \sqrt{9 - 4\Lambda Q(\si)^2}\Bigr) \leq &|\si|& \leq \frac{2\pi}{\Lambda}\Bigl(3 + \sqrt{9 - 4\Lambda Q(\si)^2}\Bigr)\label{areabound}.
\end{eqnarray}

\noindent
Moreover, the equality in \eqref{areacharge} holds if, and only if, $\si$ is totally geodesic, $E|_{\si} = aN$, for some constant $a$, $(R_g)|_{\si} \equiv 2\Lambda + 2a^2$, and $g(\si)$ is an even integer.
\end{prop}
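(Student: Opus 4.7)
The plan is to adapt Hersch's conformal balancing trick, used in \cite{Gi,DJR} for the stable case, to the index-one setting. By the classical gonality bound for compact Riemann surfaces, there is a (possibly branched) conformal map $\phi=(\phi_1,\phi_2,\phi_3)\colon \si\to\mathbb{S}^2\subset\mathbb{R}^3$ of degree at most $d:=\lceil g(\si)/2\rceil+1$. Since $\si$ has index one, the Jacobi operator $L_\si=\Delta_\si+|A|^2+\Ric_g(N,N)$ admits a unique negative eigenvalue whose eigenfunction $u_1$ may be taken positive. By Hersch's lemma applied to the probability measure $u_1 \, d\mu / \int_\si u_1\, d\mu$, I may post-compose $\phi$ with a conformal automorphism of $\mathbb{S}^2$ (still denoted $\phi$) so that $\int_\si \phi_i u_1\,d\mu=0$ for $i=1,2,3$. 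Each $\phi_i$ then lies in the $L^2$-orthogonal complement of the negative eigenspace of $L_\si$, and the index-one hypothesis forces $B(\phi_i)\geq 0$, where $B$ is the second-variation form \eqref{index}.

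Summing the three inequalities, using $\sum_i \phi_i^2\equiv 1$ and the conformal identity $\int_\si \sum_i |\nabla_\si \phi_i|^2\,d\mu=8\pi d$, one arrives at $\int_\si(|A|^2+\Ric_g(N,N))\,d\mu\leq 8\pi d$. Inserting the Gauss equation $\Ric_g(N,N)=\tfrac{1}{2}R_g - K_\si - \tfrac{1}{2}|A|^2$ (valid since $\si$ is minimal in a $3$-manifold) together with Gauss--Bonnet $\int_\si K_\si\,d\mu=4\pi(1-g(\si))$ yields
\begin{equation*}
\int_\si \tfrac{R_g}{2}\,d\mu+\int_\si \tfrac{|A|^2}{2}\,d\mu \;\leq\; 8\pi d + 4\pi(1-g(\si)) \;=\; 12\pi + 8\pi\Bigl(\tfrac{g(\si)}{2} - \bigl[\tfrac{g(\si)}{2}\bigr]\Bigr).
\end{equation*}
The hypothesis $R_g\geq 2\Lambda + 2|E|^2 \geq 2\Lambda + 2\langle E,N\rangle^2$ combined with the Cauchy--Schwarz bound $\int_\si \langle E,N\rangle^2\,d\mu \geq (4\pi Q(\si))^2/|\si|$ gives $\int_\si R_g/2\,d\mu \geq \Lambda|\si| + 16\pi^2 Q(\si)^2/|\si|$; discarding the nonnegative $|A|^2$ term then produces \eqref{areacharge}. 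For $g(\si)=0$, \eqref{areacharge} becomes the quadratic $\Lambda|\si|^2 - 12\pi|\si| + 16\pi^2 Q(\si)^2 \leq 0$, whose nonnegative discriminant yields \eqref{chargebound} and whose roots bracket $|\si|$ as in \eqref{areabound}.

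For the rigidity, tracing equality back through the chain of estimates forces: $|A|\equiv 0$ (so $\si$ is totally geodesic), pointwise equality $R_g=2\Lambda+2|E|^2$ along $\si$, equality in Cauchy--Schwarz (so $\langle E,N\rangle$ is a constant $a$), and equality in $|E|^2\geq\langle E,N\rangle^2$ (so $E|_\si=aN$, whence $R_g|_\si=2\Lambda+2a^2$). It also forces $B(\phi_i)=0$ for $i=1,2,3$, placing each coordinate function $\phi_i$ in $\ker L_\si$. The three $\phi_i$ then span a three-dimensional subspace of the nullspace of an index-one Schr\"odinger operator on $\si$ whose elements satisfy the normalization $\sum_i \phi_i^2\equiv 1$, and Ros's characterization \cite{Ro} of this rigid configuration closes the argument, yielding in particular that $g(\si)$ must be even. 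Invoking \cite{Ro} to extract the parity of $g(\si)$ from the structural constraints on $\ker L_\si$ is the most delicate step, and where I expect the main technical obstacle to lie.
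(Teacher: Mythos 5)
Your derivation of the inequality \eqref{areacharge}, of the genus-zero consequences \eqref{chargebound}--\eqref{areabound}, and of the geometric equality conditions ($A\equiv 0$, $E|_{\si}=aN$ with $a$ constant, $(R_g)|_{\si}=2\Lambda+2a^2$) coincides step by step with the paper's proof: Hersch balancing against the positive first eigenfunction, the energy identity $\int_{\si}|\nabla_{\si}(h\circ\phi)|^2\,d\mu=8\pi\deg(h\circ\phi)$, the degree bound $\deg\phi\leq 1+\bigl[\frac{g(\si)+1}{2}\bigr]$ (your gonality bound is the same number), the Gauss equation for a minimal surface, Gauss--Bonnet, Cauchy--Schwarz and H\"older. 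That part is correct and is essentially the paper's argument.

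The genuine gap is the evenness of $g(\si)$, exactly where you flagged uncertainty. Ros's result (Theorem 4 of \cite{Ro}) is not a characterization of ``rigid configurations'' with a three-dimensional nullspace; it is a degree bound: if $\psi:\si\to\sw$ is conformal and $\Delta_{\si}+|\nabla_{\si}\psi|^2$ has index one, then $\deg\psi\leq\bigl[\frac{g(\si)}{2}\bigr]+1$. To invoke it you must first identify the potential: in the equality case $A\equiv 0$, so each coordinate of $\psi=h\circ\phi$ lies in the kernel of $\Delta_{\si}+\Ric_g(N,N)$, while conformality makes $\psi$ harmonic, i.e.\ $\Delta_{\si}\psi+|\nabla_{\si}\psi|^2\psi=0$; subtracting the two equations and using $|\psi|\equiv 1$ gives $|\nabla_{\si}\psi|^2=\Ric_g(N,N)$, so $\Delta_{\si}+|\nabla_{\si}\psi|^2$ is precisely the Jacobi operator and has index one. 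Ros then yields $\deg\psi\leq\bigl[\frac{g(\si)}{2}\bigr]+1$, whereas equality in your chain of estimates forces the degree term to be saturated, $\deg\psi=1+\bigl[\frac{g(\si)+1}{2}\bigr]$; comparing the two forces $\bigl[\frac{g(\si)+1}{2}\bigr]=\bigl[\frac{g(\si)}{2}\bigr]$, which holds precisely when $g(\si)$ is even. Without this identification of the potential and the comparison of the forced degree with Ros's bound, the parity claim does not follow from what you wrote.
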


\begin{proof}
The proof is based on the Hersch's trick. Let $u_1$ be
the first eigenfunction of the Jacobi operator of $\Sigma$, which we can choose to be positive. Let $\phi$ be a conformal map
from $\Sigma$ to $\mathbb{S}^2 \subset \rt$ and consider the following integral
$$\int_\Sigma  (h\circ \phi)u_1 d\mu\in \rt,$$
where $h$ is a M\"obius tranformation of $\mathbb{S}^2$. Since $u_1$ is positive, we can find $h$ such that the above integral vanishes (see \cite{LiYa}). 
Also, by \cite{YY}, we have 
$$\int_\Sigma |\nabla_{\si} (h\circ\phi)|^2d\mu = 8\pi\deg(h\circ\phi).$$
As in \cite{RR}, we can choose $\phi$ such that 
$$\deg(\phi) \leq 1+\left[\frac{g(\Sigma)+1}{2}\right].$$

Let $(f_1,f_2,f_3)$ be the three
coordinates of $h\circ \phi$. Then $f_i$ is orthogonal to $u_1$. Since $\Sigma$ has index one we have
$$\int_\Sigma \bigl(|\nabla_{\si} f_i|^2-(\Ric_g(N,N) + |A|^2)f_i^2 \bigr)d\mu\ge 0, \ i=1,2,3.$$
Summing these three inequalities, we get

\begin{align*}
0 &\leq \int_\Sigma \bigl(|\nabla_{\si} (h\circ\phi)|^2-(\Ric_g(N,N)+|A|^2)\bigr)\,d\mu\\
&=8\pi\deg(h\circ\phi) + \int_\Sigma K_{\si}d\mu - \frac{1}{2}\int_\Sigma (R_g + |A|^2)\,d\mu\\
&\leq 8\pi\deg(\phi) + 4\pi\left( 1 - g\left( \si \right) \right) - \int_\Sigma (\Lambda + |E|^2)\,d\mu\\
&\leq 8\pi\Biggl(1+\left[\frac{g(\Sigma) + 1}{2}\right]\Biggr) + 4\pi\left( 1 - g\left( \si \right) \right) - \Lambda|\si| - \int_\Sigma \langle E, N\rangle^2\,d\mu\\
&\leq 12\pi + 8\pi\Biggl(\frac{g(\si)}{2} - \biggl[\frac{g(\si)}{2}\biggr]\Biggr) - \Lambda|\si| - \frac{1}{|\si|}\biggl(\int_\Sigma \langle E, N\rangle\,d\mu\biggr)^2\\
&= 12\pi + 8\pi\Biggl(\frac{g(\si)}{2} - \biggl[\frac{g(\si)}{2}\biggr]\Biggr) - \Lambda|\si| - \frac{16\pi^2 Q(\si)^2}{|\si|},
\end{align*}
where we have used the Gauss equation in the second line, the Gauss-Bonnet Theorem and the Dominant Energy Condition in the third line, the Cauchy-Schwarz inequality in the fourth line and the H\"older's inequality in the fifth line.

Suppose $g(\si) = 0$. Then $|\si|$ is a real number satisfying the quadratic inequality $\Lambda x^2 - 12\pi x + 16\pi^2 Q(\si)^2 \leq 0$. If $\Lambda > 0$ this is only possible if the discriminant of the quadratic expression is non-negative, thus we obtain
$$144\pi^2 - 64\pi^2\Lambda Q(\si)^2 \geq 0,$$
which implies \eqref{chargebound}. Solving the quadratic inequality we obtain \eqref{areabound}.

If equality holds in \eqref{areacharge}, then we have equality in all the previous inequalities. So, $A|_{\si} \equiv 0$, and the Jacobi operator is equal to $\Delta_{\si} + \Ric_g(N,N)$. The equality on Cauchy-Schwarz inequality implies $|E|^2 = \langle E, N\rangle^2$ and $E = aN$ for some function $a = \langle E, N\rangle$. On the other hand, the equality on H\"older's inequality gives us that $\langle E, N\rangle$ is constant. Thus, 
\begin{equation*}\label{scal}
(R_g)|_{\si} \equiv 2\Lambda + 2a^2.
\end{equation*}

Moreover, each $f_i$ is on the kernel of the Jacobi operator, which implies $\Delta_{\si} (h\circ\phi) + \Ric_g(N,N)(h\circ\phi) = 0$. As any meromorphic map is harmonic, we also have $\Delta_{\si} (h\circ\phi) + |\nabla_{\si}(h\circ\phi) |^2(h\circ\phi)  = 0$. Therefore $|\nabla_{\si}(h\circ\phi) |^2 = \Ric_g(N,N)$, and the operator $\Delta_{\si} + |\nabla_{\si}(h\circ\phi)|^2$ has index one. By Theorem $4$ in \cite{Ro}, it follows that $\deg(h\circ\phi) \leq \bigl[\frac{g(\si)}{2}\bigr] + 1$. However, by assumption we also have 
$\deg(h\circ\phi) = 1+\left[\frac{g(\Sigma)+1}2\right].$ Thus 
$\left[\frac{g(\Sigma)+1}2\right] = \left[\frac{g(\Sigma)}2\right],$
and this is only possible if $g(\si)$ is even.
\end{proof}

\subsection{Rigidity of electrostatic systems}\label{Rigidity}

In Proposition \ref{mainineq} we obtained an inequality relating area and charge under a hypothesis of local nature on $\si$, and an infinitesimal rigidity for $\si$ under the assumption of equality in \eqref{areacharge}. The main goal of this subsection is to obtain global rigidity results for $(M,g)$ in the case of electrostatic systems. We first recall the following theorem.

\begin{theo}[Marques-Neves, \cite{MaNe}]\label{Marq.Nev}
Suppose $(\st,g)$ has no embedded stable minimal spheres and $R_g \geq 2\Lambda$, for some constant $\Lambda > 0$. Then there exists an embedded minimal sphere
$\Sigma$, of index one, such that
\begin{eqnarray*}\label{widthbound}
W(\st,g) = |\Sigma| \leq \frac{12\pi}{\Lambda}.
\end{eqnarray*}
The equality holds if, and only if, $g$ has constant sectional curvature $\frac{\Lambda}{3}$.
\end{theo}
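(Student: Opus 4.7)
The strategy is to combine the Simon-Smith min-max theory with the Hersch trick already exploited in Proposition \ref{mainineq}, and then to propagate the infinitesimal rigidity at the min-max sphere to a global rigidity via a CMC foliation argument. First, I would form the saturated family $\Pi$ generated by the height-function sweepouts $\{\Gamma_t\}$ and apply the Simon-Smith min-max theorem to produce a min-max varifold realizing $W(\st,g)$. The genus-zero restriction on the sweepout slices passes to the limit by the Ketover genus bound, so the support is a disjoint union of embedded minimal spheres. Since $(\st,g)$ has no stable embedded minimal spheres, a Frankel-type argument (minimizing area in the separating homology class between any two disjoint minimal spheres would produce a stable one) forces the support to be a single connected embedded minimal sphere $\Sigma$. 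The Marques--Neves index bound gives $\ind(\Sigma)\leq 1$, and the hypothesis rules out index zero, hence $\ind(\Sigma)=1$.

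Next, I would apply the Hersch trick to $\Sigma$ exactly as in the proof of Proposition \ref{mainineq} with $E\equiv 0$. Pick a positive first Jacobi eigenfunction $u_1$ and a degree-one conformal diffeomorphism $\phi\colon \Sigma\to \sw$, then choose a M\"obius transformation $h$ of $\sw$ so that the three coordinates $f_i$ of $h\circ\phi$ are $L^2$-orthogonal to $u_1$. Using each $f_i$ as a test function in the index-one inequality, summing, and invoking Gauss-Bonnet together with $\int_\Sigma |\nabla(h\circ\phi)|^2\,d\mu = 8\pi\deg(h\circ\phi)=8\pi$ and $R_g\geq 2\Lambda$, one obtains
\begin{equation*}
0 \;\leq\; 8\pi + 4\pi - \Lambda|\Sigma|,
\end{equation*}
which yields $|\Sigma|\leq 12\pi/\Lambda$, establishing \eqref{widthbound}.

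For rigidity, suppose $|\Sigma|=12\pi/\Lambda$. The rigidity clause of Proposition \ref{mainineq} (with $E\equiv 0$) forces $\Sigma$ to be totally geodesic, $R_g|_\Sigma\equiv 2\Lambda$, and the intrinsic Gauss curvature of $\Sigma$ to equal $\Lambda/3$. To propagate this to all of $\st$ I would construct, using the implicit function theorem around the totally geodesic $\Sigma$, a local CMC foliation $\{\Sigma_t\}_{|t|<\varepsilon}$ with $\Sigma_0=\Sigma$. The area function $t\mapsto |\Sigma_t|$ has a local maximum at $t=0$ equal to the width, so min-max monotonicity together with the Hersch inequality applied on each leaf (whose genus is still zero and which inherits index at most one by continuity) forces $|\Sigma_t|\equiv 12\pi/\Lambda$ on the foliation. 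A standard Bray--Brendle--Neves computation then shows each leaf is totally geodesic with $R_g\equiv 2\Lambda$, and the warped-product splitting of the foliated region is forced to be a round metric of sectional curvature $\Lambda/3$. An open-closed continuation argument extends the foliation across all of $\st$, proving rigidity.

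The main obstacle is the rigidity step: upgrading the infinitesimal rigidity at $\Sigma$ to a global statement requires both the existence of the CMC foliation and a delicate argument that each leaf is itself a max of a continuous sweepout realizing the width, so that the equality propagates rather than breaking at nearby leaves. The area-minimization/monotonicity along the CMC foliation, combined with the Hersch bound being sharp at every leaf, is the crucial mechanism; alternatively one can invoke the Allen--Cahn regularization approach of Gaspar--Guaraco to produce nearby minimal leaves, but either way the propagation of equality is the subtle point.
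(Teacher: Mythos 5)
You are reconstructing a theorem the paper itself does not prove: Theorem \ref{Marq.Nev} is quoted verbatim from Marques--Neves \cite{MaNe} (their Theorems 3.4 and 4.9), so the comparison has to be with their argument. Your existence-and-inequality part does follow the same broad route (Simon--Smith min-max plus the Hersch trick, exactly as in Proposition \ref{mainineq} with $E\equiv 0$), but it has a real gap: multiplicity. The min-max theorem only produces a varifold of the form $m\Sigma$ with $m\ge 1$; connectedness of the support (which in $\st$ should be argued via intersection of minimal surfaces or isotopy-class minimization as in \cite{MSY}, since every embedded sphere in $\st$ is null-homologous and homology-class minimization gives nothing) together with the index bound $\ind(\Sigma)\le 1$ does not give $m=1$, and without $m=1$ you obtain neither $W(\st,g)=|\Sigma|$ nor even $W(\st,g)\le 12\pi/\Lambda$ from the Hersch bound on $|\Sigma|$. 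Proving that, under the sole hypothesis of no stable embedded minimal spheres, the width is achieved by a multiplicity-one, index-one sphere is precisely the technical heart of \cite{MaNe}; it is not a formal consequence of the later index bounds you invoke.

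The rigidity step as you propose it would fail. There is no mechanism forcing $|\Sigma_t|\equiv 12\pi/\Lambda$ along a CMC foliation: the leaves are not minimal, so ``index at most one by continuity'' and the Hersch inequality are meaningless for them; they are not max-slices of width-realizing sweepouts, so no lower bound on their areas is available; and, decisively, the claim is false in the very model you are characterizing, since in the round sphere of curvature $\frac{\Lambda}{3}$ the CMC spheres near an equator have area strictly less than $12\pi/\Lambda$. Your endgame is also internally inconsistent: a foliation by totally geodesic leaves of constant area forces a local product splitting $dt^2+g_{\Sigma_0}$, whose scalar curvature equals $2K_{\Sigma_0}=\frac{2\Lambda}{3}<2\Lambda$, contradicting $R_g\ge 2\Lambda$; it is certainly not ``forced to be a round metric of sectional curvature $\frac{\Lambda}{3}$.'' The Marques--Neves rigidity proof runs differently: starting from the infinitesimal rigidity at $\Sigma$, they show the off-center CMC leaves have strictly smaller area with a definite mean-curvature sign, and they combine this with the absence of stable spheres (and renewed min-max/minimization arguments) to show that width-realizing, totally geodesic spheres with the same infinitesimal rigidity pass through every point of $\st$, from which constant curvature follows; at no point is equality of areas propagated along a foliation. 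To complete your proposal you would have to replace the propagation-of-equality idea by an argument of that type, in addition to closing the multiplicity gap above.
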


We have then the following result.

\begin{theo}\label{rigstatic}
Let $(M^3,g,V,E)$ be an electrostatic system such that $M$ is closed, $\Lambda + |E|^2 > 0$ and $V^{-1}(0) = \si$ is non-empty and connected. Then $\si$ is a separating sphere, and
\begin{equation*}
\Lambda|\si|  \leq 12\pi,
\end{equation*}
with equality if, and only if, $E \equiv 0$ and $(M,g)$ is isometric to the standard sphere of constant sectional curvature $\frac{\Lambda}{3}$.
\end{theo}

\begin{proof}

Lets us first prove that $\si$ separates $M$. We claim $V$ changes sign on every neighbourhood of a point $p \in \si$. In fact, we saw that $|\nabla_g V|$ never vanishes on $\si$ (Lemma \ref{lemma:propert}), so a unit normal for $\si$ is given by $N = \frac{\nabla_g V}{|\nabla_g V|}$. Consider a geodesic $\gamma$ of $(M,g)$ such that $\gamma(0) = p$, and $\gamma'(0) = N(p)$, and the function defined by $f = V\circ \gamma$. We have $f(0) = 0$ and 
$$f'(0) = \langle\nabla_g V(p),N(p)\rangle = |\nabla_g V(p)| > 0.$$
Thus, $f$ it is strictly increasing in a sufficiently small neighbourhood of
$0$, which implies that for t sufficiently small we have $V\big(\gamma(t)\big) < V(p) = 0$ if $t < 0$ and $0 = V (p) < V\big(\gamma(t)\big)$ if $t > 0$. This proves the claim, and we conclude it holds the non-trivial decomposition
$$M\backslash\si = \{x \in M;\, V(x) < 0\}\cup\{x \in M;\, V(x) > 0\}.$$

If $\si$ is stable, choosing the function $f \equiv 1$ in the stability inequality and proceeding as in the proof of Proposition \ref{mainineq} we conclude that $\si$ is a $2$-sphere (using the Gauss-Bonnet Theorem) and obtain the inequality 
$$\Lambda|\si| \leq 4\pi < 12\pi.$$

Now, assume $\si$ is unstable. Let $M_1$ and $M_2$ be the connected components of $M\backslash\si$. By Theorem \ref{topology} we conclude that $(M_i,g)$ is simply connected, for $i=1,2$, and hence $\si$ is a $2$-sphere. Therefore $M$ is homeomorphic to $\st$. So $\Sigma$ is  null homologous, and hence $Q(\si) = 0$. Combining Theorems \ref{char1} and \ref{Marq.Nev} we have
\begin{eqnarray*}
W(\st,g) = |\Sigma| \ \ \textrm{and} \ \ \Lambda|\si| \leq 12\pi.
\end{eqnarray*}
Since \( R_g = 2\Lambda + 2|E|^2 \) (see Lemma \ref{lemma:propert}), it follows from Theorem \ref{Marq.Nev}
that the equality holds if, and only if $E \equiv 0$ and $g$ has constant sectional curvature $\frac{\Lambda}{3}$.
\end{proof}

Recall the generalization of Schoen's identity \cite{Sc} due to Gover and Orsted \cite[Sec.~3.1]{GO} for locally conserved tensor fields.

\begin{theo}[Gover-Orsted, \cite{GO}]
\label{psident}
Let $(M^n,g)$ be a compact Riemannian manifold with boundary $\partial M$. If $T$ is a symmetric $2$-tensor field such that $\diver_g T_g=0$, and $Z$ is a vector field on $M$, then
\begin{equation}\label{schoenpoh}
  \int_{M}Z(\tau)\,d\mathcal{V}=\frac{n}{2}\int_{M} \big\langle \mathring{T},\mathcal{L}_{Z}g\big\rangle\,d\mathcal{V} -n\int_{\partial M}\mathring{T}(Z,N)\,d\mu,
 \end{equation}
 where $\tau=\mathrm{tr}_{g}\,T$, $\mathring{T}$ denotes the trace free part of $T$ and $N$ denotes the outer unit normal along $\partial M$.
\end{theo}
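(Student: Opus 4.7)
\medskip
\noindent\textbf{Proof proposal.} The plan is to extract the identity from a single integration by parts applied to the vector field $W^i := T^{ij}Z_j$, followed by the orthogonal decomposition of $T$ into its trace-free part and its trace part. The hypothesis $\diver_g T = 0$ is exactly what makes the divergence of $W$ collapse to $T^{ij}\nabla_i Z_j$, so the whole argument is essentially algebraic manipulation once this starting point is set up.

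First I would observe that, since $T$ is symmetric, $T^{ij}\nabla_i Z_j = \tfrac12 \langle T, \mathcal{L}_Z g\rangle$, and that the assumption $\nabla_i T^{ij} = 0$ gives
\begin{equation*}
\diver_g W = T^{ij}\nabla_i Z_j = \tfrac12 \langle T, \mathcal{L}_Z g\rangle.
\end{equation*}
Applying the divergence theorem,
\begin{equation*}
\tfrac12 \int_M \langle T, \mathcal{L}_Z g\rangle\, dv \;=\; \int_{\partial M} T(Z,N)\, d\mu.
\end{equation*}

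Next I would split $T = \stackrel{\circ}{T} + \tfrac{\tau}{n} g$. Using $\langle g, \mathcal{L}_Z g\rangle = 2\,\diver_g Z$, the left-hand integrand becomes $\langle \stackrel{\circ}{T}, \mathcal{L}_Z g\rangle + \tfrac{2\tau}{n}\diver_g Z$, while the boundary integrand becomes $\stackrel{\circ}{T}(Z,N) + \tfrac{\tau}{n}\langle Z,N\rangle$. Substituting and rearranging,
\begin{equation*}
\tfrac12 \int_M \langle \stackrel{\circ}{T}, \mathcal{L}_Z g\rangle\, dv + \tfrac{1}{n}\int_M \tau\,\diver_g Z\, dv
\;=\; \int_{\partial M}\stackrel{\circ}{T}(Z,N)\, d\mu + \tfrac{1}{n}\int_{\partial M}\tau\,\langle Z,N\rangle\, d\mu.
\end{equation*}

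Finally, integrating by parts the scalar identity $\diver_g(\tau Z) = Z(\tau) + \tau\,\diver_g Z$ gives
\begin{equation*}
\int_M \tau\,\diver_g Z\, dv \;=\; \int_{\partial M}\tau\,\langle Z,N\rangle\, d\mu - \int_M Z(\tau)\, dv,
\end{equation*}
and the boundary terms carrying $\tau$ cancel on both sides. Multiplying by $n$ and solving for $\int_M Z(\tau)\, dv$ produces the stated identity. The calculation is routine; the only nontrivial point is being careful that the symmetry of $T$ is what lets one replace $T^{ij}\nabla_i Z_j$ by $\tfrac12\langle T, \mathcal{L}_Z g\rangle$, and that the cancellation of the $\tau$-boundary contributions requires using the same trace decomposition on both the interior and boundary terms.
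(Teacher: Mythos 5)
Your proof is correct and complete: defining $W = \bigl(T(Z,\cdot)\bigr)^{\sharp}$, using $\diver_g T=0$ and the symmetry of $T$ to get $\diver_g W=\tfrac12\langle T,\mathcal{L}_Z g\rangle$, decomposing $T=\stackrel{\circ}{T}+\tfrac{\tau}{n}g$ with $n=\dim M$, and integrating $\diver_g(\tau Z)$ by parts does reproduce \eqref{schoenpoh} exactly, after the $\tau$-boundary terms cancel. Note that the paper itself gives no proof of this statement — it is quoted from Gover--Orsted \cite{GO} — so there is nothing internal to compare with; your argument is the standard divergence-theorem derivation of this Pohozaev--Schoen type identity and can stand as a self-contained justification.
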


As an application of \eqref{schoenpoh} we have.

\begin{prop}\label{prop-sphere}
Let $(M,g,V,E)$ be a compact electrostatic system, such that $V^{-1}(0) = \partial M = \displaystyle\cup_{i=1}^{\ell}\partial_i M$. Then,
\begin{eqnarray}
2\pi\sum_{i=1}^{\ell} k_i\,\chi(\partial_i M) &=& \int_{M}\biggl(|\mathrm{S}_g|^2 + \frac{2}{3}(\Lambda - |E|^2)|E|^2\biggr)V\,d\mathcal{V}\nonumber\\
&& + \ \sum_{i=1}^{\ell} \int_{\partial_i M}k_i\left(\frac{\Lambda}{3}+|E|^2\right)d\mu_i,
\end{eqnarray}
where $\mathrm{S}_g = \Ric_{g}-\frac{R_g}{3}g+2E^\flat\otimes E^\flat-\frac{2}{3}|E|^2g$, and $k_i = \big|(\nabla_g V)|_{\partial_i M}\big|$.
\end{prop}

\begin{proof}
Without loss of generality we can suppose $V > 0$ on $\interior M$. Define $T_g = \Ric_{g}-\frac{R_g}{2}g+2E^\flat\otimes E^\flat-|E|^2g$. Then $\mathring{T}_g = \mathrm{S}_g$ and $\diver_g T_g=0$. Denote $Z=\nabla_g V.$ First, notice that 
\begin{eqnarray*}\label{primeq}
  \int_{M}Z(\tau)\,d\mathcal{V} &=&  \int_{M}\big\langle\nabla_g V,\nabla_g(-\Lambda-2|E|^2)\big\rangle\,d\mathcal{V}\\
  &=& 2 \int_{M}|E|^2\Delta_g V d\mathcal{V}+2 \int_{\partial M}|E|^2|\nabla_g V|\,d\mu\nonumber\\
    &=& 2 \int_{M}|E|^2(-\Lambda+|E|^2)V d\mathcal{V}+2 \int_{\partial M}|E|^2|\nabla_g V|\,d\mu,\nonumber
\end{eqnarray*}
where we have used \eqref{eq2} and the divergence Theorem. Here, the unit normal is $N =-{\nabla_g V}/{|\nabla_g V|_g}.$

Now, for the first integral on the right hand side of \eqref{schoenpoh}, since $\mathcal L_Zg=2\hess_g V$ we have
\begin{align*}\label{eqzb}
	\int_{M} \left\langle \mathrm{S}_g,\mathcal{L}_{Z}g\right\rangle d\mathcal{V}
	&= 2 \int_{M}V\big\langle\mathrm{S}_g,\Ric_g -\Lambda g + 2E^{\flat}\otimes E^{\flat} - |E|^2g\big\rangle\,d\mathcal{V}\\
	&= 2\int_{M}V|\mathrm{S}_g|^2 d\mathcal{V},
\end{align*}
where we have used \eqref{sev1} and the fact that $\langle\mathrm{S}_g,g\big\rangle=\mathrm{tr}_g\,\mathrm{S}_g = 0$.

It remains to calculate the integral on the boundary: 
\begin{eqnarray*}
\int_{\partial M}\mathrm{S}_g(\nabla_g V,N)\,d\mu&=&\int_{\partial M}\Big[ \Ric(\nabla_g V,N )-\frac{R_g}{3}\langle \nabla_g V,N \rangle\\
&& + \ 2\langle E,\nabla_g V\rangle\langle E,N \rangle - \frac{2}{3}|E|^2\langle \nabla_g V,N \rangle\Big]d\mu.
\end{eqnarray*}

By Lemma \ref{lemma:propert}, $E$ and $\nabla_g V$ are parallel along $\partial M$. Hence $\langle E,\nabla_g V \rangle^2=|E|^2|\nabla_g V|^2$ along $\partial M$. Using the Gauss equation we obtain
 
\begin{eqnarray}\label{eqbound}
\int_{\partial M}\mathrm{S}_g(\nabla_g V,N)\,d\mu &=& \sum_{i=1}^{\ell} k_i\int_{\partial_i M} \left(\frac{R_g}{3} -\Ric(N ,N ) - \frac{4}{3}|E|^2\right)d\mu_i\nonumber\\
&=& \sum_{i=1}^{\ell} k_i\int_{\partial_i M} \left(K_{\partial_i M} - \frac{R_g}{6} - \frac{4}{3}|E|^2\right)d\mu_i\nonumber\\ 
&=& \sum_{i=1}^{\ell} k_i\int_{\partial_i M} \left(K_{\partial_i M} - \frac{\Lambda}{3} - \frac{5}{3}|E|^2\right)d\mu_i\nonumber.
\end{eqnarray}
Hence, using the Gauss-Bonnet Theorem and \eqref{schoenpoh} we conclude that
\begin{align*}
2\pi\sum_{i=1}^{\ell} k_i\,\chi(\partial_i M) &= \int_{M}\biggl(|\mathrm{S}_g|^2 + \frac{2}{3}(\Lambda - |E|^2)|E|^2\biggr)V\,d\mathcal{V}\nonumber\\
& + \ \sum_{i=1}^{\ell}\int_{\partial_i M}k_i\left(\frac{\Lambda}{3}+|E|^2\right)d\mu_i.
\end{align*}
\end{proof}

 
\begin{theo}\label{uniq}
Let $(M,g,V,E)$ be a compact electrostatic system, such that $V^{-1}(0) = \partial M = \cup_{i=1}^{\ell}\partial_i M$. Suppose $\sup_M |E|^2 < \Lambda$. Then, 
\begin{equation}\label{estcomparig0}
\sum_{i=1}^{\ell} k_i\left(\frac{\Lambda}{3}\,|\partial_i M| + \frac{16\pi^2 Q(\partial_i M)^2}{|\partial_i M|}\right) \leq 4\pi\sum_{i=1}^{\ell} k_i.
\end{equation}
Moreover, the equality holds if, and only if, $E \equiv 0$
and $(M,g)$ is isometric to the standard hemisphere of constant sectional curvature $\frac{\Lambda}{3}$.
\end{theo}
\begin{proof}
The assumption $\sup_M |E|^2 < \Lambda$ implies that $\Lambda > 0$, in particular $\Lambda + |E|^2 > 0$. So, by Theorem \ref{topology}, each $\partial_i M$ is a sphere. It follows from equation \eqref{prop-sphere} that
$$\sum_{i=1}^{\ell}\int_{\partial_i M}k_i\left(\frac{\Lambda}{3}+|E|^2\right)d\mu_i \leq 4\pi\sum_{i=1}^{\ell}k_i.$$
The proof of the area-charge estimate \eqref{estcomparig0} is then similar to the proof of Proposition \ref{mainineq}. 
 
 The equality case follows from the analysis of the equation 
$$|\mathrm{S}_g|^2 + \frac{2}{3}(\Lambda - |E|^2)|E|^2=0.$$ Since $\sup_M |E|^2 < \Lambda,$ we conclude that $|\mathrm{S}_g|^2=(\Lambda - |E|^2)|E|^2=0.$ Thus $|E|^2=0$ and $(M,g)$ is Einstein. Since the manifold has dimension $3$, it follows that $(M,g)$ has constant sectional curvature $\frac{\Lambda}{3}$. In particular $(M,g)$ has positive Ricci curvature, which implies that two distinct closed minimal surfaces must intersect (\cite[Section 2]{Fr}). Hence $\partial M$ has only one component, which is unstable (there is no two-sided closed stable minimal surface in a manifold of positive Ricci curvature). So, by Theorem \ref{topology}, $M$ is simply connected. Since $\partial M$ is totally geodesic we conclude that $(M,g)$ is isometric to the standard hemisphere.
\end{proof}

\appendix

\section{Existence and regularity of solutions of a Plateau type problem}\label{appendix}

Let $\Omega$ be a compact smooth domain in a complete Riemannian $3$-manifold $(M,g)$, such that $\partial\Omega$ is either empty or consists of closed mean convex surfaces. By Nash's theorem, there is $L \geq 3$ such that $(M,g)$ isometrically embeds in $\mathbb{R}^L$. Let $\mathcal{U}$ be a mean-convex domain in $\Omega$. Suppose that there exist a compact surface $S \subset \mathcal{U}$ such that $\gamma=\partial S \subset \partial\mathcal{U}$, and subsets $\{p_1,\cdots,p_m\}, \{q_1,\cdots,q_n\}$ of $\gamma$ satisfying:
\begin{enumerate}
\item $\{p_1,\cdots,p_m\}\cap\{q_1,\cdots,q_n\} = \emptyset$,
\item $S\setminus\{p_1,\cdots,p_m\}$ is embedded,
\item $S\setminus\{q_1,\cdots,q_n\}$ is smooth.
\end{enumerate}
We shall call {\it singularities} the points $p_1,\cdots,p_m,q_1,\cdots,q_n$.

We define
$$\mathcal{D}^k(M) = \{C^{\infty}\textrm{-} \ k\textrm{-}\textrm{forms} \ \omega\ \textrm{in}\ \mathbb{R}^L;\ \spt \ \omega \subset M\}$$
with the usual topology of uniform convergence of all derivatives on compact subsets.
We denote by ${\bf I}_k(M)$ the dual space of $\mathcal{D}^k(M)$, which is called the space of $k$-dimensional integral currents in $\mathbb{R}^L$ with support contained in $M$. The {\it mass} of $T \in {\bf I}_k(M)$, defined by
$${\bf M}(T)=\sup\{T(\phi): \phi \in \mathcal{D}^k(\mathbb{R}^L), ||\phi|| \leq 1\} \leq +\infty,$$
induces the metric ${\bf M}(S,T)={\bf M}(S-T)$ on ${\bf I}_k(M)$.
 Here $\mathcal{D}^k(\mathbb{R}^L)$ denotes  the space of smooth $k$-forms in $\mathbb{R}^L$ with compact support, and $||\phi||$ denotes the comass norm of $\phi$.

We define the class $\mathfrak{C}$ of admissible currents by
\begin{eqnarray*}
\mathfrak{C} = \{T \in {\bf I}_2(\Omega); \ \spt T \subset \mathcal{U} \ \mbox{and is compact}, \ \mbox{and} \ \spt \bigl(\partial T\bigr) = \gamma\},
\end{eqnarray*}
We want to minimize area in $\mathfrak{C}$, that is, we are looking
for $T \in \mathfrak{C}$ such that
\begin{equation}\label{var.prob}
{\bf M}(T) = \inf\{{\bf M}(\tilde{T}); \ \tilde{T} \in \mathfrak{C}\}.
\end{equation}
The proof of the existence of a minimizing current for the problem \eqref{var.prob} can be found in \cite{Fed}[5.1.6]. Let $T \in \mathfrak{C}$ be a solution. Then
\begin{eqnarray*}
{\bf M}(T) &\leq & {\bf M}(T + X) \label{min.prop},\\
\spt T &\subset & \mathcal{U}
\end{eqnarray*}
for any $X \in {\bf I}_2(\Omega)$ with compact support such that $\spt X \subset \mathcal{U}$ and $\partial X = 0$. 

Now, lets discuss the regularity of $T$. First, we need the following proposition.

\begin{prop}\label{edelen}
Consider $p\in \gamma$ and let $B_r(p)$ denote the geodesic ball of center $p$ and radius $r$ of $\Omega$. Consider $X \in {\bf I}_2(\Omega)$ such that $\spt X \subset B_r(p)$ and is compact, and $\partial X = 0$. If $T$ is a solution of \eqref{var.prob}, then there is a constant $C'$ such that for $r>0$ sufficiently small we have
\begin{equation}\label{almostminimizing}
{\bf M}_{B_r(p)}(T)\leq (1+C'r){\bf M}_{B_r(p)}(T + X).
\end{equation}
\end{prop}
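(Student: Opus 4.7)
The plan is to construct a near-identity nearest-point retraction $\Pi$ onto $\overline{\mathcal{U}}$ with controlled Lipschitz constant near $p$, use it to convert a general variation $X$ supported in $B_r(p)$ (which may leave $\mathcal{U}$) into an admissible one, and then invoke the minimality property \eqref{min.prop}.

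First I would establish the near-identity projection. Since $\partial\mathcal{U}$ is smooth at $p$, the signed distance function to $\partial\mathcal{U}$ is smooth on a tubular neighborhood $N_{\delta_0}$, and the nearest-point projection $\Pi : N_{\delta_0} \to \overline{\mathcal{U}}$ is a well-defined smooth map equal to the identity on $\overline{\mathcal{U}} \cap N_{\delta_0}$. A direct computation in Fermi coordinates relative to $\partial\mathcal{U}$ shows that for all $r < \delta_0/2$,
$$\operatorname{Lip}(\Pi|_{B_r(p)}) \leq 1 + Cr,$$
with $C$ depending only on the second fundamental form of $\partial\mathcal{U}$ and the curvature of $M$ near $p$. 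This quantitative bound is the technical core of the argument.

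Next I would form the comparison current $T' := \Pi_{\#}(T + X)$, choosing $r$ so that $\|T\|(\partial B_r(p)) = \|X\|(\partial B_r(p)) = 0$ (valid for a.e.\ $r$ by slicing). Since $T$ is supported in $\mathcal{U}$ where $\Pi$ is the identity, $T' = T$ outside $B_r(p)$. Because $\gamma \subset \partial\mathcal{U}$ is pointwise fixed by $\Pi$ and $\partial X = 0$, one has $\partial T' = \Pi_{\#}(\partial T) = \partial T$ and $\spt T' \subset \overline{\mathcal{U}}$. Hence $Y := T' - T$ is a compactly supported integer $2$-current with $\partial Y = 0$ and $\spt Y \subset \overline{\mathcal{U}}$, i.e., an admissible variation; \eqref{min.prop} then yields $\mathbf{M}(T) \leq \mathbf{M}(T+Y) = \mathbf{M}(T')$, which localizes to $\mathbf{M}_{B_r(p)}(T) \leq \mathbf{M}_{B_r(p)}(T')$ since $T$ and $T'$ agree outside $B_r(p)$.

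Finally, applying the standard inequality $\mathbf{M}(f_{\#} Z) \leq (\operatorname{Lip} f)^2 \mathbf{M}(Z)$ for $2$-currents with $f = \Pi|_{B_r(p)}$ gives
$$\mathbf{M}_{B_r(p)}(T') \leq (1+Cr)^2 \mathbf{M}_{B_r(p)}(T+X) \leq (1+C'r) \mathbf{M}_{B_r(p)}(T+X)$$
for $r$ small and a suitable $C'$, which combines with the previous step to yield \eqref{almostminimizing}. The principal obstacle is the sharp Lipschitz bound in the first step, a standard but delicate computation relying on the smoothness of $\partial\mathcal{U}$ and on uniform geometric bounds near $p$; a secondary subtlety is the interpretation of $\spt Y \subset \mathcal{U}$ versus $\spt Y \subset \overline{\mathcal{U}}$ in \eqref{min.prop}, handled by the standard convention for obstacle Plateau problems, or if necessary by a small inward perturbation of $T'$ into $\mathcal{U}$.
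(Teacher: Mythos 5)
Your proposal follows essentially the same route as the paper's proof: both use the nearest-point projection $\Pi$ onto $\mathcal{U}$ near $p$, show it is the identity on $\mathcal{U}$ with $|D\Pi - \Id| = O(r)$ so that its Lipschitz constant is $1+Cr$, push forward the competitor $T+X$ by $\Pi$ to obtain an admissible comparison, and combine the minimizing property \eqref{min.prop} with the bound ${\bf M}(\Pi_{\#}Z)\leq(1+Cr)^2{\bf M}(Z)\leq(1+C'r){\bf M}(Z)$. Your extra care with slicing, boundary preservation, and the $\mathcal{U}$ versus $\overline{\mathcal{U}}$ issue only refines the same argument.
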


\begin{proof}
To simplify the notation let us write $T$ to denote $T\cap B_r(p).$ Let $\tilde T \in {\bf I}_2(\Omega)$ be an integer multiplicity rectifiable current such that $\spt \tilde T \subset B_r(p)$ and is compact, and $\spt \bigl(\partial\tilde T\bigr)=\spt \bigl(\partial T\bigr)$. 

Consider $\mathcal{P}:B_r(p)\to B_r(p)\cap \mathcal{U}$ the projection onto $B_r(p)\cap \mathcal{U}$, i.e., for $q\in B_r(p)$, $\mathcal{P}(q)$ is the closest point to $q$ in $B_r(p)\cap \mathcal{U}$. Observe that for $r$ sufficiently small, $\mathcal{P}$ is well defined, almost everywhere smooth and Lipschitz. We can find a constant $C > 0$ such that
\begin{equation}\label{eq-der}
|D\mathcal{P}(q)|< 1 + Cr, \ \textrm{for a.e}\ q \in B_r(p).
\end{equation}

Let $X$ be as in the statement of the proposition. Denote $\tilde T = T + X$. Using inequality \eqref{eq-der} we get
\begin{equation*}\label{ineq.alm.min}
{\bf M}(T)\leq {\bf M}(\mathcal{P}(\tilde T)) \leq (1+Cr)^2 {\bf M}(\tilde T) \leq (1+C'r){\bf M}(\tilde T),
\end{equation*}
for some constants $C, C'$.
\end{proof}

\begin{theo}\label{last}
Let $T$ be a solution of \eqref{var.prob}. Then away from the singularities of $\gamma$, $T$ is supported in a connected oriented embedded minimal surface.
\end{theo}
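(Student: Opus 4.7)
The plan is to invoke the classical regularity theory for area-minimizing (or almost area-minimizing) integer rectifiable currents in codimension one, using Proposition~\ref{edelen} as the bridge between minimality in the constrained class $\mathfrak{C}$ and the standard almost-minimizing framework. In any geodesic ball $B_r(p)$ contained in the interior of $\mathcal{U}$, the retraction $\Pi$ in the proof of Proposition~\ref{edelen} is the identity, so $T$ is genuinely mass-minimizing against compactly supported integral cycles there; in balls meeting $\partial\mathcal{U}$, Proposition~\ref{edelen} yields an $(\mathbf{M},0,\delta)$-almost-minimality of $T$ with gauge function of order $r$, which is sufficient input for the standard regularity theorems.

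First, at any $p \in \spt T \setminus (\gamma \cup \partial \mathcal{U})$, the interior regularity theorem of De Giorgi--Federer--Fleming for codimension-one mass-minimizing integral currents in a smooth ambient $3$-manifold (the interior singular set is empty below dimension $7$) guarantees that $\spt T$ is a smooth embedded minimal surface with constant positive integer multiplicity in a neighborhood of $p$. At a smooth point $p \in \gamma$, i.e.\ $p \notin \{p_1,\ldots,p_m\} \cup \{q_1,\ldots,q_n\}$, the curve $\gamma$ is a smooth embedded arc sitting on $\partial\mathcal{U}$; combining the almost-minimizing estimate \eqref{almostminimizing} with the boundary regularity theorem of Hardt--Allard for mass-minimizing rectifiable currents at a $C^{1,\alpha}$ prescribed boundary, one deduces that in a neighborhood of $p$ the support $\spt T$ is a smooth embedded surface with boundary $\gamma$, whose unique tangent cone is a multiplicity-one half-plane.

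Next, at a point $p \in \spt T \cap (\partial \mathcal{U} \setminus \gamma)$, the current $T$ has no boundary locally, and the mean convexity of $\partial \mathcal{U}$ together with White's strong maximum principle for stationary varifolds implies that either $\spt T$ avoids $\partial\mathcal{U}$ in a neighborhood of $p$, or the local support of $T$ coincides with a portion of $\partial \mathcal{U}$ that is itself minimal at $p$; in both cases $\spt T$ is a smooth embedded minimal surface near $p$. For connectedness, if the smooth embedded minimal surface supporting $T$ had a connected component $\Sigma'$ disjoint from $\gamma$, then the associated integer multiplicity current $T'$ would satisfy $\partial T' = 0$ and $\spt T' \subset \mathcal{U}$, so that $T - T' \in \mathfrak{C}$ with $\mathbf{M}(T - T') < \mathbf{M}(T)$, contradicting the minimality of $T$; a homological argument in the spirit of Step~2 of Theorem~\ref{topology} then rules out a splitting of $\spt T$ along $\gamma$, and orientability is built into the integer multiplicity structure of $T$.

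The principal technical point will be the boundary regularity at the smooth points of $\gamma$: the setting is slightly non-standard because $\gamma$ lies on $\partial\mathcal{U}$ rather than in the ambient interior, so the constraint $\spt T \subset \mathcal{U}$ could a priori interact with the boundary behaviour. The plan to handle this is to use the mean convexity of $\partial\mathcal{U}$, together with the almost-minimizing inequality \eqref{almostminimizing}, to show that in a one-sided neighborhood of a smooth point of $\gamma$ the constraint $\spt T \subset \mathcal{U}$ is inactive, so that the local minimizer reduces to an unconstrained Plateau solution to which the Hardt--Allard boundary regularity theorem applies directly; the required comparison currents are constructed by pushing $T$ slightly into $\mathcal{U}$ along the inward unit normal to $\partial\mathcal{U}$.
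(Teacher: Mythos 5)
Your interior step and your treatment of points of $\spt T\cap(\partial\mathcal{U}\setminus\gamma)$ are consistent with the paper (which gets the cleaner global dichotomy from White's maximum principle \cite{Whi2}: either $\spt T\subset\partial\mathcal{U}$ or $\spt T\cap\partial\mathcal{U}=\gamma$). The genuine gap is exactly at the smooth points of $\gamma$, which you correctly single out as the main technical point but do not resolve. Your plan is to show the obstacle constraint is ``inactive'' in a one-sided neighborhood, so that $T$ becomes an unconstrained Plateau solution to which Hardt--Simon/Allard boundary regularity applies. This does not work as stated: $\gamma=\spt(\partial T)$ lies \emph{on} the obstacle $\partial\mathcal{U}$, so the constraint is active precisely along the boundary; pushing $T$ slightly into $\mathcal{U}$ along the inward normal gives no comparison against competitors that leave $\mathcal{U}$, and the nearest-point projection onto a merely mean-convex domain is not area non-increasing, so the most one extracts is the almost-minimality \eqref{almostminimizing} with gauge $C'r$, not exact minimality in the unconstrained class. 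Moreover, even granting almost-minimality, Allard's boundary regularity theorem needs as an input that the tangent cone at $p\in\gamma$ is a multiplicity-one half-plane; the possible presence of extra sheets of $T$ coming into $p$ tangentially to $\partial\mathcal{U}$ (half-plane plus planes, higher density) is precisely what must be excluded, and your proposal asserts the multiplicity-one half-plane as an output rather than proving it.

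The paper closes this gap with an Edelen-type argument: near $p$ it constructs an auxiliary surface $\si$ in the exterior region $\mathcal{W}=\Omega\setminus\mathcal{U}$ with $\partial\si=\spt(\partial T\mres B_r(p))$, so that $(T-[\si])\mres B_r(p)$ is a cycle; it then decomposes this cycle as $\sum_j\partial[E_j]$ for nested sets of finite perimeter, shows that $\partial[E_{j_*}]+[\si]$ and each $\partial[E_j]$, $j\neq j_*$, inherit the almost-minimizing property \eqref{almostminimizing}, identifies the tangent cones (a multiplicity-one half-plane for the $j_*$ piece, multiplicity-one planes for the others), applies Allard's interior and boundary theorems to each piece separately, and finally shows every piece with $j\neq j_*$ is empty by the strong maximum principle: such a piece would be a minimal surface lying on the mean-convex side of $\partial\mathcal{U}$ and tangent to it at $p$, hence equal to $\partial\mathcal{U}$, contradicting $\spt T\cap\partial\mathcal{U}=\gamma$. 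This decomposition-plus-maximum-principle step is the heart of the boundary regularity and is missing from your proposal. (Your removal of closed components to get connectedness is fine, but the appeal to a homological argument ``in the spirit of Step 2 of Theorem \ref{topology}'' to rule out a splitting along $\gamma$ is unsubstantiated; the paper itself only needs and only establishes the regularity statement component by component.)
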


\begin{proof}
By \cite[Theorem 4]{Whi2}, either $\spt T \subset \partial\mathcal{U}$ or $\spt T \cap\partial\mathcal{U} = \gamma$. In the first case, $\spt T\setminus\bigl(\{p_1,\cdots,p_m\}\cup\{q_1,\cdots,q_n\}\bigr)$ is automatically smooth and embedded. In the remaining of the proof we will suppose the second case happens. We can then apply the classical interior regularity theory (see \cite{Fed}) to prove that $\spt T \setminus\spt\partial T$ is a smooth embedded minimal surface.

Now, let $p$ be a point in the boundary away from the singularities. Since the regularity is a local question, we can assume that $\spt T \subset B_{r_0}(p)$, for some $r_0 > 0$, and this ball does not contain the singularities of $\gamma$. Since $\mathcal{U}$ is compact and mean convex, it satisfies a uniform exterior ball condition at $p$, i.e., there exists $r > 0$ such that, for any $q \in \partial \mathcal{U}$ there is a ball $B_r(y) \subset \Omega\setminus\mathcal{U}$ which is tangent to $\partial\mathcal{U}$ at $q$. Moreover it holds the almost-minimizing property \eqref{almostminimizing}. So we can argue as in the proof of Theorem $6$ in \cite{LiMe} to conclude the result.
\end{proof}


\begin{thebibliography}{50}

\bibitem{AMN} I. Agol, F.C. Marques and A. Neves, {\it Min-max theory and the energy of links}. J. Amer. Math. Soc. 29 (2016), no. 2, 561-578.

\bibitem{Alm1} 
F. J. Almgren, {\it  The homotopy groups of the integral cycle groups.} Topology, (1962), no 1, 257-299.

\bibitem{Alm2} F. Almgren, {\it The theory of varifolds}, Mimeographed notes, Princeton (1965).

\bibitem{A} L. Ambrozio, {\it On static three-manifolds with positive scalar curvature,} J. Differential Geom. 107, (2017), no. 1, 1-45.

\bibitem{AmmLauNis} B. Ammann, R. Lauter and V. Nistor, \textit{On the geometry of Riemannian manifolds with a Lie structure at infinity}, Int. J. Math. Math. Sci. (2004), no. 1-4, 161-193.


\bibitem{Aron} N. Aronszajn, {\it A unique continuation theorem for solutions of elliptic partial differential equations or inequalities of second order,} J.Math.Pures Appl. 36, (1957), 235-249.


\bibitem{BGH} W. Boucher, G. Gibbons and G. Horowitz, {\it Uniqueness theorem for anti-de Sitter
spacetime,} Phys. Rev. D (3) 30 (1984), no. 12, 2447-2451.

\bibitem{BOU} R. Bousso, {\it Charged Nariai black holes with a dilaton}, Phys. Rev. D 55 (1997), no. 12,
3614-3621.

 \bibitem{Bray} 
 H. Bray, {\it  Proof of the Riemannian Penrose inequality using the positive mass theorem,} J. Differential Geom. 59 (2001), no. 2, 177-267.
 
 
\bibitem{BBN} H. Bray, S. Brendle and A. Neves,  {\it Rigidity of area-minimizing two-spheres in three-manifolds}. Comm. Anal. Geom. 18 (2010), no. 4, 821-830.
 
\bibitem{BrayLee} 
 H. Bray and D. Lee, {\it On the Riemannian Penrose inequality in dimensions less than eight},
  Duke Math. J. 148 (2009), no. 1, 81-106. 

\bibitem{BM} 
 G.L. Bunting  and A.K.M. Masood-ul Alam, {\it Nonexistence of multiple black holes in asymptotically Euclidean static vacuum space-time}, Gen. Relativ. Gravit., 19 (1987), 147-154.
 
\bibitem{CCE} A. Carlotto, O. Chodosh and M. Eichmair, {\it Effective versions of the positive mass theorem,} Invent. Math. 206 (2016), no. 3, 975-1016.

\bibitem{CGa} C. Cederbaum and G. Galloway, {\it Uniqueness of photon spheres in electro-vacuum spacetimes}, Class. Quantum Grav. 33 no. 7 (2016), pp. 075006.


\bibitem{Chr} P.T. Chru\'sciel, {\it The classification of static vacuum spacetimes containing
an asymptotically flat spacelike hypersurface with compact interior}, Classical
Quantum Gravity 16 (1999), no. 3, 661-687.

\bibitem{CT} P.T. Chru\'sciel, P. Tod, {\it The classification of static electro-vacuum space-times containing an asymptotically flat spacelike hypersurface with compact interior}. Comm. Math. Phys. 271 (2007), no. 3, 577-589.
 
\bibitem{C&DL}
T.H. Colding and C. De Lellis, {\it The min-max construction of minimal surfaces,} Surveys in Differential Geometry VIII, International Press, (2003), 75-107.

\bibitem{CGK} T.H. Colding, D. Gabai and D. Ketover, {\it On the classification of Heegaard splittings}. Duke Math. J. 167 (2018), no. 15, 2833-2856.

\bibitem{Corv} J. Corvino, {\it Scalar curvature deformation and a gluing construction for the Einstein constraint equations}, Comm. Math. Phys. 214 (2000), 137-189.

\bibitem{DG} S. Dain and M. E. Gabach-Clement, {\it Geometrical inequalities bounding angular momentum and charges in General Relativity}, Living Reviews in Relativity 21 (2018), no. 5, 74pp.

\bibitem{DJR} S. Dain, J. L. Jaramillo, and M. Reiris, {\it Area-charge inequality for black holes}, Class. Quantum Grav., 29 (2012), 035013.


\bibitem{Fed} H. Federer, {\it Geometric measure theory,} Die Grundlehren der mathematischen Wissenschaften, Band 153, Springer-Verlag New York Inc., New York, 1969. 

\bibitem{FM} A.E. Fischer and J.E. Marsden, {\it Deformations of the scalar curvature}, Duke Math. J. 42 (1975), 519-547.

\bibitem{Fr} T. Frankel, {\it On the fundamental group of a compact minimal submanifold}. Ann. of Math. (2) 83 (1966), 68-73.

\bibitem{Fran} T. Frankel, {\it Gravitational curvature. An introduction to Einstein's theory}. W. H. Freeman and Co., San Francisco, Calif., 1979. xviii+172 pp.

\bibitem{FG} T. Frankel and G. Galloway, {\it Stable minimal surfaces and spatial topology in general relativity}, Math. Z. 181 (1982), no. 3, 395-406.

\bibitem{FHS} M. Freedman, J. Hass and P. Scott, {\it Least area incompressible surfaces in $3$-manifolds}, Invent. Math. {\bf 71} (1983), 609--642.

\bibitem{G} {G.} Galloway, {\it On the topology of black holes,} Comm. Math. Phys. 151 (1993), no. 1, 53-66.



\bibitem{Gi}
G. Gibbons, {\it Some comments on gravitational entropy and the inverse mean curvature flow}, Class.Quant.Grav., 16 (1999), 1677-1687.


\bibitem{GourgoulhonFormalismGeneralRelativity2012} E. Gourgoulhon, {\it 3+1 Formalism in General Relativity. Bases of numerical nelativity}. Springer, Berlin, 2012.

\bibitem{GO} A. R. Gover and B. Orsted, {\it Universal principles for Kazdan-Warner and Pohozaev-Schoen type identities},  Commun. Contemp. Math. 15, no. 4 (2013), 1350002, 27 pp.

\bibitem{GP} J.B. Griffiths and J. Podolsk\'y, {\it Exact space-times in Einstein's general relativity}. Cambridge Monographs on Mathematical Physics. Cambridge University Press, Cambridge, 2009. 




\bibitem{HMM} L. Huang, D. Martin, P. Miao, {\it Static potentials and area minimizing hypersurfaces}, Proc. Amer. Math. Soc. 146 (2018), 2647-2661.

\bibitem{HI}
 G. Huisken and T. Ilmanen,
{\it The inverse mean curvature flow and the Riemannian Penrose inequality}, 
 J. Differential Geom. 59 (2001), no. 3, 353-437.

\bibitem{HP} G. Huisken and A. Polden, 
{\it Geometric evolution equations for hypersurfaces},  Calc. of Var. and Geom. Evo. Probl. (Cetraro, 1996), vol. 1713, Lecture Notes in Math., p. 45-84. Springer,
Berlin, 1999.

\bibitem{IMN} K. Irie, F.C. Marques and A. Neves, {\it Density of minimal hypersurfaces for generic metrics}. Ann. of Math. (2) 187 (2018), no. 3, 963-972.

\bibitem{I1} W. Israel, {\it Event Horizons in Static Vacuum Space-Times}, Phys. Rev., 164 (1967), 1776-1779. 

\bibitem{I2} W. Israel, {\it Event Horizons in Static Electrovac Space-Times}, Commun. Math. Phys., 8 (1968), 245-260. 

\bibitem{Ketgenusbound}
D. Ketover, {\it Genus bounds for min-max minimal surfaces}. J. Differential Geom. 112 (2019), no. 3, 555-590.

\bibitem{KeMaNe} D. Ketover, F. C. Marques and A.Neves, {\it  The catenoid estimate and its geometric applications,} arXiv:1601.04514 [math.DG] (2016), to appear in J. Diff. Geom.


\bibitem{KLS} D. Ketover, Y. Liokumovich and A. Song, {\it On the existence of minimal Heegaard surfaces}, arXiv:1911.07161 [math.DG] (2019)



\bibitem{Kh} M.A. Khuri, {\it Inequalities between size and charge for bodies and the existence of black holes due to concentration of charge}. J. Math. Phys. 56 (2015), no. 11, 112503, 9 pp.

\bibitem{KWY} M. Khuri, G. Weinstein and S. Yamada, {\it Proof of the Riemannian Penrose inequality with charge for multiple black holes}, J. Differential Geom. 106 (2017), no. 3, 451-498

\bibitem{KuhnelConformalTransformationsEinstein1988}
W. K{\"u}hnel, {\it  Conformal transformations between Einstein spaces, in Conformal geometry} (Bonn, 1985/1986), 105-146, Aspects Math., E12, Vieweg, Braunschweig, 1988.

\bibitem{LN} D. Lee and A. Neves, {\it The Penrose inequality for asymptotically locally hyperbolic spaces with nonpositive mass}, Comm. Math. Phys. 339 (2015), no. 2, 327-352.


\bibitem{LiYa} P. Li and S. T. Yau, {\it A new conformal invariant and its applications to the Willmore conjecture and the first eigenvalue of compact surfaces}, Invent. Math. 69 (1982), 269-291.

\bibitem{LiMe} V. Lima and A. Menezes, Ana A two-piece property for free boundary minimal surfaces in the ball. Trans. Amer. Math. Soc. 374 (2021), no. 3, 1661-1686.

\bibitem{LMN} Y. Liokumovich, F.C. Marques and A. Neves, {\it Weyl law for the volume spectrum}. Ann. of Math. (2) 187 (2018), no. 3, 933-961.

\bibitem{MaNe}
F.C. Marques and A. Neves,
{\it  Rigidity of min-max minimal spheres in three-manifolds},
 Duke Math. J., 161(14) (2012),2725-2752.

\bibitem{MaNeWillmore}
F.C. Marques and A. Neves,
{\it  Min-max theory and the Willmore conjecture},
 Ann. of Math., 179(2)(2014),683-782.

\bibitem{MaNeindexbound}
F. C. Marques and A.Neves,
{\it Morse index and multiplicity of min-max minimal hypersurfaces,} Camb. J. Math., 4 (2016), 463-511.

\bibitem{MaNeinfinity}
F.C. Marques and A. Neves,
{\it  Existence of infinitely many minimal hypersurfaces in positive Ricci curvature,} Invent. Math. 209 (2017), 577-616


\bibitem{MNS} F.C. Marques, A. Neves and A. Song, {\it Equidistribution of minimal hypersurfaces for generic metrics}. Invent. Math. 216 (2019), no. 2, 421-443.

\bibitem{MA} A. K. M. Masood-ul-Alam, {\it The topology of asymptotically Euclidean static perfect fluid space-time}, Comm. Math. Phys. 108 (1987), no. 2, 193-211.


\bibitem{MN} D. M\'aximo and I. Nunes, {\it Hawking mass and local rigidity of minimal two-spheres in
three-manifolds}, Comm. Anal. Geom. 21 (2013), no. 2, 409-432.

\bibitem{MR} L. Mazet, H. Rosenberg, {\it Minimal hypersurfaces of least area}. J. Differential Geom. 106 (2017), no. 2, 283-315.

\bibitem{MPR} W.H. Meeks III, J. P\'erez and A. Ros, {\it Stable constant mean curvature surfaces}. Handbook of geometric analysis. No. 1, 301-380, Adv. Lect. Math. (ALM), 7, Int. Press, Somerville, MA, 2008.

\bibitem{MSY}
W.H. Meeks III, L. Simon and S.T. Yau,
{\it Embedded minimal surfaces, exotic spheres, and manifolds with positive Ricci curvature}, Ann. of Math. 116 (1982),  621-659.

\bibitem{MeeYau} W.H. Meeks III and S.T. Yau, \textit{The Existence of embedded minimal surfaces and the problem of uniqueness},  Math. Z. 179 (1982), no. 2, 151-168.



\bibitem{P}
J.T. Pitts, {\it Existence and regularity of minimal surfaces on Riemannian manifolds}, Princeton University Press and University of Tokyo Press, 1981.


\bibitem{RR}  M. Ritor\'e and A. Ros, {\it Stable Constant Mean Curvature Tori and the Isoperimetric Problem in Three Space Forms,} Comment. Math. Helvet. 67 (1992), 293-305.

\bibitem{Rom} L.J. Romans, {\it Supersymmetric, cold and lukewarm black holes in cosmological Einstein-Maxwell theory}, Nucl. Phys. B, 383 (1983), 395.

\bibitem{Ro} A. Ros, {\it One-sided complete stable minimal surfaces}, J. Differential Geom. 74 (2006), no. 1, 69-92.

\bibitem{Sc} R. Schoen, \textit{The existence of weak solutions with prescribed singular behavior for a conformally invariant scalar equation}, Comm. Pure Appl. Math. 41 (1988), no. 3, 317-392.

\bibitem{SS} R. Schoen and L. Simon, {\it Regularity of stable minimal hypersurfaces}. Comm. Pure Appl. Math. 34 (1981), 741-797.

\bibitem{SY3}R. Schoen and S.T. Yau, {\it Existence of incompressible minimal surfaces and the topology of three-dimensional manifolds with nonnegative scalar curvature,} Ann. of Math. 110 (1979), 127-142. 

\bibitem{SY1}
R. Schoen and S.T. Yau,
{\it On the proof of the positive mass conjecture in general relativity},Commun. Math. Phys. 65 (1979), no. 1, 45-76.

\bibitem{Shen} Y. Shen, \textit{A note on Fischer-Marsden's conjecture}, Proc. Amer. Math. Soc. 125 (1997), no. 3, 901-905.

\bibitem{Sim} W. Simon, \textit{ Bounds on area and charge for marginally trapped surfaces with cosmological
constant}, Class. Quantum Grav., 29 (2012), 062001.

\bibitem{Smith} F. Smith, {\it On the existence of embedded minimal {$2$-}spheres in the
  {$3$-}sphere, endowed with an arbitrary Riemannian metric},
Bull. Austral. Math. Soc., 28, (1983), 159-160


\bibitem{So2} A. Song, {\it Existence of infinitely many minimal hypersurfaces in closed manifolds}, arXiv:1806.08816 [math.DG] (2018)

\bibitem{W} B. White, {\it A strong minimax property of nondegenerate minimal submanifolds}, J. Reine Angew. Math. 457 (1994), 203-218.

\bibitem{W2} B. White, {\it On the compactness theorem for embedded minimal surfaces in 3-manifolds with locally bounded area and genus}. Comm. Anal. Geom. 26 (2018), no. 3, 659-678.

\bibitem{Whi2} B. White, {\it The Maximum Principle for Minimal Varieties of Arbitrary Codimension}, Comm. Anal. Geom. 18 (2010), number 3, pages 421--432.

\bibitem{YY} P.C. Yang and  S.T. Yau,
{\it Eigenvalues of the Laplacian of compact Riemann surfaces and minimal submanifolds}, Ann. Scuola Norm. Sup. Pisa Cl. Sci. (4) 7 (1980), no. 1, 55-63. 

\bibitem{Zhou} X. Zhou, {\it Min-max minimal hypersurface in $(M^{n+1},g)$ with $Ric > 0$ and $2\leq n\leq 6$}, J. Differential Geom. 100 (2015), no. 1, 129-160.


\bibitem{Zhou3} X. Zhou, {\it On the Multiplicity One Conjecture in Min-max theory}, Ann. of Math. (2) 192 (2020), no. 3, 767-820.

\end{thebibliography}
\end{document}